\newtheorem{theorem}{Theorem}
\newtheorem{theorem*}{Theorem}
\newtheorem{proposition}[theorem]{Proposition}
\newtheorem{definition}[theorem]{Definition}
\DeclareMathOperator{\R}{\mathbb{R}}
\begin{document}
\title{
Space-time boundary elements for frictional contact in elastodynamics}
\author{Alessandra Aimi\thanks{Department of Mathematical, Physical and Computer Sciences,
University of Parma, Parco Area delle Scienze, 53/A, 43124, Parma,
Italy, email: alessandra.aimi@unipr.it} \and Giulia Di Credico\thanks{Department of Engineering and Architecture,
University of Parma, Parco Area delle Scienze, 181/A, 43124, Parma,
Italy, email: giulia.dicredico@unipr.it} ${}^\ddagger$ \and Heiko Gimperlein\thanks{Engineering Mathematics, University of Innsbruck, Innsbruck, Austria,  email: heiko.gimperlein@uibk.ac.at\\ \noindent 
 A.~Aimi and G.~Di~Credico~are members of the INDAM-GNCS Research Group, Italy. \\\noindent \emph{Acknowledgements:} We thank Peter Gamnitzer for helpful discussions.}}
\date{}

\providecommand{\keywords}[1]{{\textit{Key words:}} #1}

\maketitle \vskip 0.5cm
\begin{abstract}
\noindent This article studies a boundary element method for dynamic frictional contact between linearly elastic bodies. We formulate these problems {as a variational inequality on the boundary, involving the elastodynamic} Poincar\'{e}-Steklov operator. The variational inequality is solved in a mixed formulation using boundary elements in {space and time}. In the model problem of unilateral Tresca friction contact with a rigid obstacle we obtain an a priori estimate for the resulting Galerkin approximations. Numerical experiments in two space dimensions demonstrate the stability, energy conservation and convergence of the proposed method for contact problems involving concrete and steel in the linearly elastic regime. They address both unilateral and two-sided dynamic contact with Tresca or Coulomb friction. 
\end{abstract}
\keywords{boundary element methods; space-time methods; frictional contact problem; variational inequality; Coulomb friction; elastodynamics.}

\section{Introduction}
\label{intro}

The deformation of an elastic body naturally involves contact with surrounding materials.
The resulting nonpenetration constraints and friction lead to significant challenges for numerical simulations, especially in dynamic problems, because the time discretization interacts with the contact conditions. In the absence of dissipation, for common finite element procedures of dynamic contact at most stability or the approximate conservation of energy have been established. See \cite{bur2,chouly0,choulybook,choulydyn2,doyen2,twosided,gwinner,sten2,hauret,cont2,cont1,chouly3} for examples from the extensive mathematical finite element literature on static and dynamic contact problems.

As the contact takes place on the surface,  boundary element and coupled finite element / boundary element approaches provide an efficient alternative in static or quasi-static problems \cite{gwinsteph}. Their numerical analysis has been extensively studied and is well-understood in the context of elliptic variational inequalities. Rigorous boundary element methods for dynamic problems, however, were only recently introduced and investigated for simplified dynamic model problems, such as scalar and simple elastodynamic unilateral Signorini problems \cite{ourpaper2,contact}. 

In this article we propose and study a time-domain boundary element method for the full, dynamic frictional contact problem between two linearly elastic bodies. We investigate its properties and its performance from model situations to simulations with realistic material parameters, in particular the stability, convergence and energy conservation.\\
In order to describe our results more precisely, we consider the dynamics of a homogeneous, linearly elastic body in a bounded domain $\Omega \subset \mathbb{R}^d$, $d=2,3$, whose boundary $\partial \Omega$ is denoted by $\Gamma$. The dynamics of the body is described by the Navier-Lam\'{e} equations
\begin{equation}\label{navierlame}
\nabla \cdot \sigma(\textbf{u})-\varrho\ddot{\textbf{u}}=\textbf{0} 
\end{equation}
for the unknown displacement vector $\textbf{u}$ in $\Omega$ at times $t \in (0,T]$. Here, a dot denotes the derivative with respect to $t$, $\varrho$ the homogeneous mass density and  $\sigma(\textbf{u})$ the Cauchy stress tensor
$$\sigma(\textbf{u})=\varrho (c_{\mathtt{P}}^2-2c_{\mathtt{S}}^2)\,(\nabla \cdot \textbf{u})I+\varrho c_{\mathtt{S}}^2\,(\nabla \textbf{u}+\nabla \textbf{u}^\top)\,,$$
where the pressure and shear velocities are denoted by $c_{\mathtt{P}}$, $c_{\mathtt{S}}$.

The contact boundary conditions on a given contact boundary $\Gamma_C \subset \Gamma$ combine the non-penetration of an obstacle with friction tangent to the boundary. To describe them, we denote by {$\vert_{\Gamma}$ the restriction to $\Gamma$}, by $\textbf{n}$ the outward-pointing unit normal vector to $\Gamma$, and by $v_\perp$ and $\mathbf{v}_\parallel$ the normal {component}, respectively {the} tangential {component}, of a vector $\textbf{v}$ at $\Gamma$. In particular $v_\perp:=-\textbf{v}\cdot \textbf{n}$. In terms of the elastic traction \begin{equation}
\label{trazione}
\textbf{p}=\sigma\left(\textbf{u}\right)_{\vert_{\Gamma}} \bf n\, ,\end{equation}
for small deformations the non-penetration condition becomes
\begin{equation}\label{contactbc}\begin{cases}
{ u}_\perp \geq g\ ,\, {p}_\perp\geq {f}_\perp\ ,\\ {u}_\perp > g\ \Longrightarrow {p}_\perp = {f}_\perp\ .
\end{cases}\end{equation}
The mechanical interpretations of the gap function $g$ and the prescribed {surface} force ${\bf f}$ are discussed in Section \ref{mechsetup} below. The non-penetration is accompanied by friction parallel to $\Gamma_C$, and it is given by
\begin{align}\label{frictionbc}
\begin{cases} \| \mathbf{p}_\parallel \|  \leq \mathcal{F}, \\
 \| \mathbf{p}_\parallel \|  < \mathcal{F} \Longrightarrow \dot{\mathbf{u}}_\parallel=0, \\
 \| \mathbf{p}_\parallel \|  = \mathcal{F} \Longrightarrow \exists\, \alpha \geq 0 : \dot{\mathbf{u}}_\parallel=-\alpha \mathbf{p}_\parallel.
\end{cases}\end{align}
The friction law is determined by the friction threshold $\mathcal{F}\geq 0$. Section \ref{mechsetup} discusses the mechanical interpretation of the Tresca and Coulomb friction laws.  

The contact boundary conditions \eqref{contactbc}, \eqref{frictionbc} on $\Gamma_C$ are complemented by boundary conditions prescribing the traction on $\Gamma_N \subset \Gamma$ and the displacement on the remainder $\Gamma_D = \Gamma \setminus \overline{\Gamma_\Sigma}$, where $\Gamma_\Sigma:=\Gamma_N \cup \Gamma_C$ is the union of the contact and traction boundaries.\\
Building on \cite{ourpaper2}, we use the elastodynamic Poincar\'{e}-Steklov operator to formulate the contact problem \eqref{navierlame}, \eqref{contactbc}, \eqref{frictionbc} as a variational inequality on the boundary $\Gamma$. 
We discretize an equivalent mixed formulation for the displacement and the contact forces using a time-domain Galerkin boundary element method and solve the resulting nonlinear problem by an Uzawa
algorithm.\\
This article presents the details of the algebraic formulation and implementation of the energetic space-time boundary element method for both the above-described unilateral friction problem and the frictional contact between two bodies. Numerical results confirm the stability and convergence of the proposed method in two dimensions. We present results in unilateral model problems with Tresca and Coulomb friction, as well as for contact involving concrete and steel. Both polygonal and curved contact boundaries are considered, as well as contact between two elastic bodies.   Theoretically, we obtain an a priori estimate in the model problem of unilateral Tresca friction contact, generalizing the results in \cite{ourpaper2} for the unilateral Signorini problem.\\
The methods developed in this article build on recent advances for boundary elements for {hyperbolic evolution equations}, see \cite{costabel04,gimperleinreview,hd,sayas} for an overview.  
For the numerical analysis and computational aspects in the case of elastodynamic problems we refer to \cite{AimiJCAM, ourpaper, Becache1993, Becache1994, chaillat,  falletta, schanz}.\\
This article is structured as follows: in Section \ref{mechsetup} the dynamic frictional contact problem will be introduced, for both unilateral and two-body contact. In Section \ref{Unilateral}, we introduce and analyze variational and mixed formulations for the unilateral frictional contact problem, together with its discretization, giving also a priori error estimates. In this Section both {Tresca} and Coulomb frictions are taken into account. The generalization to the two-body contact problem will be considered in Section \ref{Bilateral}. Algorithmic details will be provided in Section \ref{sec:psh}. Several numerical results will be presented in Section \ref{sec;numres}. Some conclusions are given in Section \ref{sec:conclusions}.\\

\noindent \emph{Notation:}  
We write $f \lesssim g$ if $f \leq Cg$ for some constant $C>0$, and $f \lesssim_\sigma g$ if 
the constant $C>0$ may depend on a parameter $\sigma$.

\section{Dynamic frictional contact:  
problem formulations} \label{mechsetup}

\begin{figure}[h!]
\centering
\subfloat[]{\includegraphics[width=0.31\textwidth]{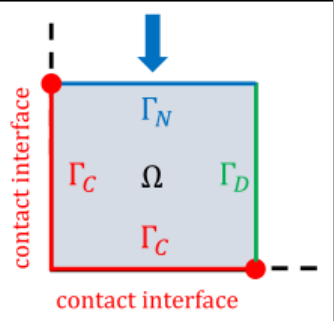}}
\quad
\subfloat[]{\includegraphics[width=0.31\textwidth]{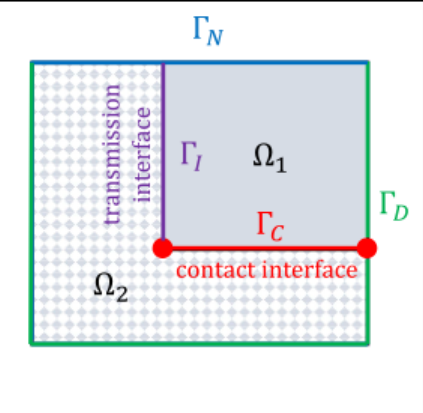}}
\caption{Schematic depiction of unilateral (a) and bilateral (b) contact problems.}
\label{Figure:setup}
\end{figure}

This subsection introduces the equations which govern the contact problems studied in this article. As illustrated in 
Figure \ref{Figure:setup}(a) for $d=2$, we consider the {dynamic} deformation of a linearly elastic body, described by the 
Navier-Lam\'{e} equations \eqref{navierlame} for the displacement $\textbf{u}$ in a bounded domain $\Omega \subset \mathbb{R}^d$, $d=2,3$, with boundary $\Gamma:=\partial \Omega$. 
For the remainder of this article we assume that $\Omega$ is a bounded polygonal or polyhedral Lipschitz domain.
Starting from the reference configuration, $\textbf{u} = \textbf{0}$ for times $t\leq 0$, the dynamics of the body {are} due to surface 
forces $\mathbf{f}$ prescribed on the subset $\Gamma_\Sigma \subset \Gamma$ already introduced in the previous section.   

The unilateral contact problem considered in this article describes the  impossibility of the body to penetrate an adjacent
rigid surface. In Figure \ref{Figure:setup}(a) the impenetrable obstacle is given by the positive $x$- and $y$-axes.
Nonpenetration of the obstacle leads to the contact condition \eqref{contactbc} on the part $\Gamma_C \subset \Gamma$ of the boundary where contact may occur. 
More precisely, from \eqref{contactbc} contact takes place when the normal displacement satisfies $u_\perp = g$, where $g$ describes the gap between the reference configuration and the obstacle.  
Contact is avoided when $u_\perp > g$, and then only the applied surface forces ${p}_\perp = {f}_\perp$ act on $\Gamma_C$.
In addition to nonpenetration, contact leads to frictional forces tangential to the rigid surface on $\Gamma_C$. The friction law is specified by the friction threshold $\mathcal{F} \geq 0$. 
The tangential displacement $\mathbf{u}_\parallel$ of the body remains fixed, $\dot{\mathbf{u}}_\parallel=0$, until the magnitude of the parallel traction $\mathbf{p}_\parallel$ reaches  $\mathcal{F}$. Once $\| \mathbf{p}_\parallel \|  = \mathcal{F}$  the body deforms with a velocity  {opposite} to the direction of the traction $\mathbf{p}_\parallel$.

Typical friction laws include Tresca and Coulomb friction, both of which are considered in this article. Tresca friction corresponds to a prescribed threshold $\mathcal{F} \in L^\infty(\Gamma_C)$  which is independent of the traction ${\bf p}$
 and the nonpenetration condition. Friction may then take place even when $u_\perp > g$, leading to unphysical non-zero tangential tractions outside the physical contact area. This problem is avoided by the Coulomb friction law, which replaces the prescribed friction threshold $\mathcal{F}$ by the function $\mathcal{F} = \mathcal{F}_c\,|p_\perp|$. Coulomb friction is often used in applications as a model for dry friction; we also refer to \cite{wrig} for further background on Coulomb friction and other friction laws.

At last, the body is fixed on $\Gamma_D\subset \Gamma$. Hence,
the full system of governing equations describing the unilateral frictional contact problem is then given by:
\begin{subequations}  \label{prob:strong_formulation}
\begin{alignat}{2}
\nabla \cdot \sigma(\textbf{u})-\varrho\ddot{\textbf{u}}&=\textbf{0} & \quad & \text{in } (0,T]\times  \Omega \label{strongformPDE}\\
	\textbf{u}&=\textbf{0} & \quad & \text{on } (0,T]\times  \Gamma_D \label{prob:strongformulation_d}\\
  \textbf{p} &=\mathbf{f} & \quad & \text{on } (0,T]\times  \Gamma_N \label{prob:strong_formulation_1}
 \end{alignat}
\end{subequations}
together with the contact conditions \eqref{contactbc}, \eqref{frictionbc} on $(0,T]\times  \Gamma_C$ and homogeneous 
initial condition $\mathbf{u}\equiv \mathbf{0}$ 
in $\Omega$ for $t\leq 0$. 
Here and in the following we assume that $\textbf{f}_\parallel = 0$ on $(0,T]\times\Gamma_C$. 
It is also convenient to 
write the contact boundary conditions \eqref{contactbc}, \eqref{frictionbc} in a compact form, as
\begin{align}
 u_\perp\geq g\ ,\ p_\perp\geq f_\perp,\ \ (p_\perp-f_\perp)(u_\perp-g)&=0,
\label{fullcontactbc1}
\\
  \| \mathbf{p}_\parallel \|  \leq \mathcal{F},\ \mathbf{p}_\parallel \cdot\dot{\mathbf{u}}_\parallel+\mathcal{F}\,\| \dot{\mathbf{u}}_\parallel \| &=0. \label{fullcontactbc2}
\end{align}

\vspace{0.2in}
The unilateral frictional contact problem described above readily generalizes to a frictional contact problem between two linearly elastic bodies, as depicted in 
Figure \ref{Figure:setup}(b). Their time dependent deformation is described by the 
Navier-Lam\'{e} equations for their displacements $\textbf{u}_1, \textbf{u}_2$  in $\Omega_1$, respectively $\Omega_2$: 
\begin{equation}
    \nabla \cdot \sigma_j(\mathbf{u}_j)-\varrho\ddot{\mathbf{u}}_j=\textbf{0} \quad  \text{in } (0,T]\times  \Omega_j\,,j=1,2. \label{strongformPDE2b}
\end{equation}
Starting from the reference configuration, $\textbf{u}_j = \textbf{0}$, $j=1,2$, for times $t\leq 0$, the dynamics is now governed by transmission and contact conditions at the interface, on $\Gamma_I$ and $\Gamma_C$. We also impose surface forces on $\Gamma_N \subset \partial \Omega_1 \cup \partial \Omega_2$ and fix the bodies on $\Gamma_D \subset \partial \Omega_1 \cup \partial \Omega_2$, as described above for unilateral contact.\\
In this article we restrict to a formulation valid for small relative deformations of $\Omega_1$ and $\Omega_2$, see {\cite{chouly3}} for general, large deformations in a time-independent contact problem. The contact between the two bodies takes place at the interface $\partial\Omega_1 \cap \partial\Omega_2$, where either transmission conditions (on $\Gamma_I$) or contact conditions (on $\Gamma_C$) are imposed. The transmission conditions describe a rigid connection between $\Omega_1$ and $\Omega_2$:
\begin{subequations}  \label{prob:strong_formulation2b}
\begin{alignat}{2}
	\mathbf{u}_1-\mathbf{u}_2&=\mathbf{g} & \quad & \text{on } (0,T]\times  \Gamma_I  \label{prob:strong_formulation2b_2}\\
  \mathbf{p}_1+\mathbf{p}_2 &=\mathbf{f} & \quad & \text{on } (0,T]\times  \Gamma_I. \label{prob:strong_formulation2b_1}
\end{alignat}
\end{subequations}
{Here $\textbf{p}_i=\sigma\left(\textbf{u}_i\right)_{\vert_{\Gamma}} \textbf{n}_i\,$, where $\textbf{n}_i$ is the outward-pointing unit normal vector to $\partial\Omega_i$}. Data $\mathbf{g} = \mathbf{0}$,  $\mathbf{f}= \mathbf{0}$, correspond to the continuity of displacements and equilibrium of  tractions, while general $\mathbf{g}$,  $\mathbf{f}$ allow to model a fixed, rigid relative displacement or a surface {force} at the interface.
The contact conditions on $(0,T]\times  \Gamma_C$ allow the opening of a small gap and friction at the contact interface $\Gamma_C$; they depend on the \emph{relative} displacements and \emph{relative} tractions of the two bodies:  
\begin{align}
 u_{1,\perp}-u_{2,\perp}\geq g_{\perp}
 \ ,\ p_{1,\perp}\geq 0,\ p_{2,\perp}\leq  f_\perp,\ \begin{cases} p_{1,\perp} = f_\perp -p_{2,\perp}= 0 & \text{ if }u_{1,\perp}-u_{2,\perp}> g_{\perp}\\ p_{1,\perp} + p_{2,\perp} = f_\perp & \text{ if } u_{1,\perp}-u_{2,\perp}= g_{\perp} \end{cases} \text,
\label{fullcontactbc12b}\\
  \mathbf{p}_{1,\parallel} + \mathbf{p}_{2,\parallel}-\textbf{f}_\parallel = 0, \ \| \mathbf{p}_{1,\parallel} \|  \leq \mathcal{F},\ \mathbf{p}_{1,\parallel} \cdot( \dot{\mathbf{u}}_{1,\parallel}-\dot{\mathbf{u}}_{2,\parallel}-\dot{\mathbf{g}}_{\parallel})+\mathcal{F}\| \dot{\mathbf{u}}_{1,\parallel}-\dot{\mathbf{u}}_{2,\parallel}-\dot{\mathbf{g}}_{\parallel} \| =0. \label{fullcontactbc22b}
\end{align}

{On the interface $\partial \Omega_1\cap \partial \Omega_2$ the normal vector is chosen to point from $\Omega_1$ to $\Omega_2$.}\\ The traction and displacement conditions are as for the unilateral contact problem:
\begin{subequations}  
\label{sfdn2}
\begin{alignat}{2}
	\textbf{u}_j&=\textbf{0} & \quad & \text{on } (0,T]\times  \Gamma_D \label{prob:strongformulation_d_2b}\\
  \textbf{p}_j &=\mathbf{f} & \quad & \text{on } (0,T]\times  \Gamma_N. \label{prob:strong_formulation_1_2b}
\end{alignat}
\end{subequations}
For the contact of two bodies, it will be convenient to define the interface $\Gamma_\Sigma':=\Gamma_C\cup \Gamma_I$ and $\Gamma_\Sigma$ as $\Gamma_\Sigma:=\Gamma_N\cup\Gamma_\Sigma'$.

\section{Unilateral frictional contact}\label{Unilateral}

Boundary element methods reduce the differential equation \eqref{navierlame} to {a singular integral equation on} the boundary of the domain. The resulting lower-dimensional problem can lead to efficient numerical schemes for contact problems, where the key difficulties are on the boundary. 

In this article we reduce the unilateral frictional contact problem \eqref{prob:strong_formulation}, \eqref{fullcontactbc1}, \eqref{fullcontactbc2} to an equivalent variational inequality on the boundary.  
The reduction involves the Poincar\'{e}-Steklov operator $\mathcal{S}$ on $(0,T]\times  \Gamma$, defined by
\begin{equation}
\label{operator_S}
\mathcal{S} \left({\bf u}{|_\Gamma}\right) := \sigma\left(\textbf{u}\right)_{|_\Gamma}  {\bf n}=\mathbf{p}.
\end{equation}
Here, ${\bf u}$ is the solution to the elastodynamic equation \eqref{strongformPDE} in $(0,T]\times  \Omega$, given a prescribed Dirichlet datum ${\bf u}|_\Gamma$. The traction $\mathbf{p}$ in the last equality was defined in \eqref{trazione}. 
To simplify the notation, the subscript $\vert_\Gamma$ in the argument of the operator {$\cal S$} is omitted whenever it is clear from the context. Mapping properties of $\mathcal{S}$ are recalled in \cite{ourpaper2} (see Theorem 5 there).

\subsection{Variational inequality and mixed formulation for Tresca friction} 
In a first step we consider the dynamic Tresca friction problem, where the friction threshold $0\leq\mathcal{F} \in L^\infty(\Gamma_C)$ in \eqref{fullcontactbc2} is independent of $\mathbf{p}$.  
Its variational formulation involves weighted $L^2$ inner products in space and time, defined by
\begin{align}
&\langle \textbf{u},\textbf{v} \rangle_{0,\Gamma,(0,T]}:=\int_0^T \int_{\Gamma}\textbf{u}(t,\textbf{x})\cdot\textbf{v}(t,\textbf{x})\: d\Gamma_{\textbf{x}}\:dt,\label{0_product}\\
&\langle \textbf{u},\textbf{v} \rangle_{\sigma,\Gamma,\mathbb{R}^+}:=\int_0^\infty e^{-2\sigma t}\int_{\Gamma}\textbf{u}(t,\textbf{x})\cdot\textbf{v}(t,\textbf{x})\: d\Gamma_{\textbf{x}}\:dt,\label{sigma_product}  
\end{align}
where $\sigma>0$. {The index {\small{$0$}} on the left-hand side of \eqref{0_product} is motivated by the standard $L^2$ inner product and the choice $\sigma=0$ in \eqref{sigma_product}.} The subscript $\Gamma$ may also be replaced by a subset $\Gamma' \subset \Gamma$ of the boundary, when relevant. 

The variational formulation of the  contact problem involves the friction functional 
\begin{equation}   
\label{friction functional}j(\mathbf{v}):= {\int_0^T}\int_{\Gamma_C} \mathcal{F}\, \|\dot{\bf v}_\parallel\| \,d\Gamma_{\textbf{x}}\,{dt},
\end{equation}
More concisely, we write $j(\mathbf{v}) =\langle {\cal F},\|\dot{\bf v}_\parallel\| \rangle_{0,\Gamma_C,(0,T]}$.
We also require the linear operator $\partial_{t,\parallel}$ which differentiates only the tangential component of a vectorial function $\mathbf{v}$ with respect to time; in particular, $\partial_{t,\parallel} \mathbf{v} = \dot{\mathbf{v}}$, if $v_\perp = 0$, while $\partial_{t,\parallel} \mathbf{v} = \mathbf{v}$, if $\mathbf{v}_\parallel = \mathbf{0}$.\\

The formulation of \eqref{prob:strong_formulation}, \eqref{fullcontactbc1}, \eqref{fullcontactbc2} as a variational inequality in terms of $\mathcal{S}$ then reads as follows, for a given gap function $g$ on $(0,T]\times  \Gamma_C$ and surface forces $\textbf{f}$ on $(0,T]\times  \Gamma_\Sigma$: \\

\noindent \textit{find} $\textbf{u} \in \mathcal{C}:=\left\lbrace \textbf{v} : (0,T]\times  \Gamma \to \mathbb{R}^d: \mathbf{v} = \mathbf{0} \; \text{~a.e.~on~} (0,T]\times  \Gamma_D, \,v_\perp \geq g \text{~a.e.~on~} (0,T]\times  \Gamma_C \right\rbrace$ \textit{such that}
\begin{align} \label{eq:VarIneq}
\langle {\mathcal{S} {\bf u}}, \partial_{t,\parallel}({\bf v}-{\bf u})\rangle_{0,\Gamma_\Sigma,(0,T]} + j({\bf v})-j({\bf u})\geq \langle {\bf f}, \partial_{t,\parallel}({\bf v}-{\bf u})\rangle_{0,\Gamma_\Sigma,(0,T]} \qquad \quad  \forall\, {\bf v}  \in \mathcal{C}.
\end{align}

In order to relate the variational inequality \eqref{eq:VarIneq} to the  contact problem \eqref{prob:strong_formulation}, \eqref{fullcontactbc1}, \eqref{fullcontactbc2}, recall that the displacement $\mathbf{u}$ in the domain $(0,T]\times \Omega$ can be recovered from its boundary trace $\mathbf{u}|_\Gamma$ on $(0,T]\times \Gamma$. This is typical for direct boundary element methods and we review it below in Section \ref{birs} (see, in particular, the representation formula \eqref{representation formula}).

\begin{proposition}\label{prop:varineq}
The variational inequality \eqref{eq:VarIneq} for the displacement $\mathbf{u}|_\Gamma$ on $(0,T]\times \Gamma$ is equivalent to the unilateral frictional contact problem \eqref{prob:strong_formulation}, \eqref{fullcontactbc1}, \eqref{fullcontactbc2} for the solution $\mathbf{u}$ in $(0,T]\times \Omega$.
\end{proposition}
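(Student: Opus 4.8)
The plan is to prove the two implications separately, in each case using the Poincar\'e--Steklov operator $\mathcal{S}$ to pass between the interior problem and the boundary. The starting point is that, by the direct boundary integral representation of Section~\ref{birs} and the mapping properties of $\mathcal{S}$ recalled in \cite{ourpaper2}, a field $\mathbf{u}$ on $(0,T]\times\Omega$ solves \eqref{strongformPDE} with $\mathbf{u}\equiv\mathbf{0}$ for $t\le 0$ if and only if it is recovered from its trace $\mathbf{u}|_\Gamma$ by the representation formula \eqref{representation formula}, and then $\mathbf{p}=\sigma(\mathbf{u})|_\Gamma\mathbf{n}=\mathcal{S}(\mathbf{u}|_\Gamma)$. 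Hence it suffices to show that, with $\mathbf{p}=\mathcal{S}(\mathbf{u}|_\Gamma)$, the boundary conditions \eqref{prob:strongformulation_d}, \eqref{prob:strong_formulation_1}, \eqref{fullcontactbc1}, \eqref{fullcontactbc2} on $\mathbf{u}|_\Gamma$ are equivalent to the membership $\mathbf{u}|_\Gamma\in\mathcal{C}$ together with \eqref{eq:VarIneq}.

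For the implication ``strong problem $\Rightarrow$ variational inequality'', assume $\mathbf{u}$ solves \eqref{prob:strong_formulation}, \eqref{fullcontactbc1}, \eqref{fullcontactbc2}. Then $\mathbf{u}|_\Gamma\in\mathcal{C}$ by \eqref{prob:strongformulation_d} and \eqref{fullcontactbc1}. For any $\mathbf{v}\in\mathcal{C}$ the difference $\mathbf{v}-\mathbf{u}$ vanishes on $(0,T]\times\Gamma_D$, so $\langle\mathcal{S}\mathbf{u}-\mathbf{f},\partial_{t,\parallel}(\mathbf{v}-\mathbf{u})\rangle_{0,\Gamma_\Sigma,(0,T]}$ splits into a contribution on $\Gamma_N$, which vanishes by \eqref{prob:strong_formulation_1}, and a contribution on $\Gamma_C$. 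Using $\mathbf{f}_\parallel=0$ on $\Gamma_C$ and the pointwise decomposition $\mathbf{w}\cdot\mathbf{z}=w_\perp z_\perp+\mathbf{w}_\parallel\cdot\mathbf{z}_\parallel$, the integrand on $\Gamma_C$ equals $(p_\perp-f_\perp)(v_\perp-u_\perp)+\mathbf{p}_\parallel\cdot(\dot{\mathbf{v}}_\parallel-\dot{\mathbf{u}}_\parallel)$. The normal term is rewritten as $(p_\perp-f_\perp)(v_\perp-g)\ge 0$ using the complementarity in \eqref{fullcontactbc1} and $v_\perp\ge g$; the tangential term is bounded below by $-\mathcal{F}(\|\dot{\mathbf{v}}_\parallel\|-\|\dot{\mathbf{u}}_\parallel\|)$, using the identity $\mathbf{p}_\parallel\cdot\dot{\mathbf{u}}_\parallel=-\mathcal{F}\|\dot{\mathbf{u}}_\parallel\|$ from \eqref{fullcontactbc2} together with $\mathbf{p}_\parallel\cdot\dot{\mathbf{v}}_\parallel\ge-\|\mathbf{p}_\parallel\|\,\|\dot{\mathbf{v}}_\parallel\|\ge-\mathcal{F}\|\dot{\mathbf{v}}_\parallel\|$. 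Integrating over $(0,T]\times\Gamma_C$ turns the lower bound on the tangential term into $-(j(\mathbf{v})-j(\mathbf{u}))$, and \eqref{eq:VarIneq} follows.

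For the converse, let $\mathbf{u}|_\Gamma\in\mathcal{C}$ solve \eqref{eq:VarIneq}, let $\mathbf{u}$ be its interior extension and $\mathbf{p}=\mathcal{S}(\mathbf{u}|_\Gamma)$; then \eqref{strongformPDE}, the homogeneous initial condition and \eqref{prob:strongformulation_d} already hold, as does $u_\perp\ge g$ on $\Gamma_C$. The remaining conditions are extracted by testing \eqref{eq:VarIneq} against suitable competitors. First, taking $\mathbf{v}=\mathbf{u}\pm\boldsymbol{\varphi}$ with $\boldsymbol{\varphi}$ supported in $(0,T]\times\Gamma_N$ (so $j(\mathbf{v})=j(\mathbf{u})$) gives $\langle\mathbf{p}-\mathbf{f},\partial_{t,\parallel}\boldsymbol{\varphi}\rangle_{0,\Gamma_N,(0,T]}=0$; varying $\varphi_\perp$ and a time antiderivative of $\boldsymbol{\varphi}_\parallel$ independently over a dense set of test fields yields $\mathbf{p}=\mathbf{f}$ on $\Gamma_N$, i.e.\ \eqref{prob:strong_formulation_1}. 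Next, taking $\mathbf{v}=\mathbf{u}+\boldsymbol{\varphi}$ with $\boldsymbol{\varphi}_\parallel=0$, $\varphi_\perp\ge 0$ on $\Gamma_C$ gives $\langle p_\perp-f_\perp,\varphi_\perp\rangle_{0,\Gamma_C,(0,T]}\ge 0$, hence $p_\perp\ge f_\perp$; and taking $\mathbf{v}$ obtained from $\mathbf{u}$ by replacing only $v_\perp$ by $g$ on $\Gamma_C$ gives $\langle p_\perp-f_\perp,\,g-u_\perp\rangle_{0,\Gamma_C,(0,T]}\ge 0$, whose integrand is nonpositive and therefore vanishes, giving the complementarity in \eqref{fullcontactbc1}. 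Finally, taking $\mathbf{v}=\mathbf{u}+\boldsymbol{\varphi}$ with $\varphi_\perp=0$ and $\dot{\boldsymbol{\varphi}}_\parallel=\pm\varepsilon\boldsymbol{\psi}_\parallel$ on $\Gamma_C$, dividing by $\varepsilon$, using $\|\dot{\mathbf{u}}_\parallel\pm\varepsilon\boldsymbol{\psi}_\parallel\|-\|\dot{\mathbf{u}}_\parallel\|\le\varepsilon\|\boldsymbol{\psi}_\parallel\|$ and letting $\varepsilon\to 0$ gives $|\langle\mathbf{p}_\parallel,\boldsymbol{\psi}_\parallel\rangle_{0,\Gamma_C,(0,T]}|\le\langle\mathcal{F},\|\boldsymbol{\psi}_\parallel\|\rangle_{0,\Gamma_C,(0,T]}$ for all $\boldsymbol{\psi}_\parallel$, hence $\|\mathbf{p}_\parallel\|\le\mathcal{F}$; and taking $\mathbf{v}$ obtained from $\mathbf{u}$ by setting $\mathbf{v}_\parallel=0$ on $\Gamma_C$ gives $\langle\mathbf{p}_\parallel,\dot{\mathbf{u}}_\parallel\rangle_{0,\Gamma_C,(0,T]}+\langle\mathcal{F},\|\dot{\mathbf{u}}_\parallel\|\rangle_{0,\Gamma_C,(0,T]}\le 0$, whose integrand is nonnegative by $\|\mathbf{p}_\parallel\|\le\mathcal{F}$ and hence vanishes, giving the identity in \eqref{fullcontactbc2}. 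Together these recover \eqref{prob:strong_formulation}, \eqref{fullcontactbc1}, \eqref{fullcontactbc2}, equivalently \eqref{contactbc}, \eqref{frictionbc}.

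I expect the genuinely ``hard'' part to be not the algebra above but making it rigorous in the correct function-space setting: identifying the trace/energy spaces in which $\mathcal{C}$, the duality pairing with $\mathcal{S}\mathbf{u}$ and the operator $\partial_{t,\parallel}$ are well defined (which is where the mapping properties of $\mathcal{S}$ from \cite{ourpaper2} enter), verifying that the competitors used above---normal, respectively tangential, modifications of $\mathbf{u}$ on $\Gamma_C$ and compactly supported perturbations on $\Gamma_N$---indeed lie in $\mathcal{C}$ and in those spaces under mild regularity of the data $g,\mathbf{f},\mathcal{F}$, and justifying the density arguments, in particular the causality constraint at $t=0$ and the free endpoint at $t=T$. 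These points are handled as in the Signorini case of \cite{ourpaper2}, to which the pure normal part of the argument reduces when $\mathcal{F}\equiv 0$.
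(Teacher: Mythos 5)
Your proposal is correct and follows essentially the same route as the paper's proof: the same splitting of $\langle\mathcal{S}\mathbf{u}-\mathbf{f},\partial_{t,\parallel}(\mathbf{v}-\mathbf{u})\rangle$ into normal and tangential parts on $\Gamma_N$ and $\Gamma_C$ for the forward direction, and the same test-function choices (perturbations supported on $\Gamma_N$, one-signed normal perturbations, $w=(g-u_\perp)\mathbf{n}$, tangential perturbations with the triangle inequality, and $\mathbf{v}_\parallel=\mathbf{0}$, i.e.\ $\mathbf{w}=\mathbf{u}_\parallel$) for the converse. The only differences are cosmetic (the $\varepsilon$-scaling and the one-sided sign arguments in place of the paper's two-sided inequalities), and your remarks on the function-space setting match the paper's level of rigor.
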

\begin{proof} 
First, we show that the boundary trace $\mathbf{u}|_\Gamma$ satisfies \eqref{eq:VarIneq}, if $\mathbf{u}$ is a solution to \eqref{prob:strong_formulation}, \eqref{fullcontactbc1}, \eqref{fullcontactbc2}.  

In the perpendicular component on $\Gamma_C$, we write the non-penetration boundary condition  \eqref{fullcontactbc1} in terms of $\mathcal{S}$ using the definition \eqref{operator_S}:
\begin{equation}\label{scontactbc2}
{ u}_\perp \geq g\ ,\ \mathcal{S}(\mathbf{u}|_\Gamma)_\perp\geq {f}_\perp\ , \ ({u}_\perp - g)(\mathcal{S}(\mathbf{u}|_\Gamma)_\perp - f_\perp) = 0\ .\end{equation}
The condition \eqref{fullcontactbc2} for the parallel component on $\Gamma_C$ becomes, 
\begin{align}
  \| \mathcal{S}(\mathbf{u}|_\Gamma)_\parallel \|  \leq \mathcal{F},\ \mathcal{S}(\mathbf{u}|_\Gamma)_\parallel \cdot\dot{\mathbf{u}}_\parallel+\mathcal{F}\,\| \dot{\mathbf{u}}_\parallel \| &=0. \label{scontactbc}
\end{align}
On $\Gamma_N$ the Neumann boundary conditions \eqref{prob:strong_formulation_1} imply 
$\mathcal{S}(\mathbf{u}|_\Gamma) = \mathbf{f}.$
Recalling that $\mathbf{f}_\parallel = \mathbf{0}$ on $(0,T]\times  \Gamma_C$, we find for any 
$\mathbf{v} \in \mathcal{C}$
\begin{align*}&\langle\mathcal{S}(\mathbf{u}|_\Gamma) - \mathbf{f},\partial_{t,\parallel}(\mathbf{v}-\mathbf{u}|_\Gamma)\rangle_{0,\Gamma_\Sigma,(0,T]} + j(\mathbf{v}) - j(\mathbf{u}|_\Gamma) \\&= \langle\mathcal{S}(\mathbf{u}|_\Gamma)_\parallel - {\mathbf{f}}_\parallel,(\dot{\mathbf{v}}-\dot{\mathbf{u}}|_\Gamma)_\parallel\rangle_{0,\Gamma_N,(0,T]} + \langle\mathcal{S}(\mathbf{u}|_\Gamma)_\parallel - \mathbf{f}_\parallel,(\dot{\mathbf{v}}-\dot{\mathbf{u}}|_\Gamma)_\parallel\rangle_{0,\Gamma_C,(0,T]}+ j(\mathbf{v}) - j(\mathbf{u}|_\Gamma)\\&\qquad + \langle\mathcal{S}(\mathbf{u}|_\Gamma)_\perp - f_\perp,(\mathbf{v}-\mathbf{u}|_\Gamma)_\perp\rangle_{0,\Gamma_\Sigma,(0,T]} \\
& = 0 + \langle\mathcal{S}(\mathbf{u}|_\Gamma)_\parallel ,(\dot{\mathbf{v}}-\dot{\mathbf{u}}|_\Gamma)_\parallel\rangle_{0,\Gamma_C,(0,T]}+ j(\mathbf{v}) - j(\mathbf{u}|_\Gamma) \\&\qquad + \langle\mathcal{S}(\mathbf{u}|_\Gamma)_\perp - f_\perp,v_\perp-g\rangle_{0,\Gamma_\Sigma,(0,T]}-\langle\mathcal{S}(\mathbf{u}|_\Gamma)_\perp - {f}_\perp,u_\perp-g\rangle_{0,\Gamma_\Sigma,(0,T]}
\end{align*}
We are going to show that the right hand side is $\geq 0$. First note that 
$\langle\mathcal{S}(\mathbf{u}|_\Gamma)_\perp - f_\perp,v_\perp-g\rangle_{0,\Gamma_\Sigma,(0,T]} \geq 0$ and the last term is trivial, so that we only need to show $\langle\mathcal{S}(\mathbf{u}|_\Gamma)_\parallel,(\dot{\mathbf{v}}-\dot{\mathbf{u}}|_\Gamma)_\parallel\rangle_{0,\Gamma_C,(0,T]}+ j(\mathbf{v}) - j(\mathbf{u}|_\Gamma) \geq 0$. From
\eqref{scontactbc}
$ \mathcal{S}(\mathbf{u}|_\Gamma)_\parallel \cdot\dot{\mathbf{u}}_\parallel+\mathcal{F}\,\| \dot{\mathbf{u}}_\parallel \| =0$, and we observe 
$\langle\mathcal{S}(\mathbf{u}|_\Gamma)_\parallel,\dot{\mathbf{u}}_\parallel|_\Gamma\rangle_{0,\Gamma_C,(0,T]}+ j(\mathbf{u}|_\Gamma)=0$ and therefore
$$\langle\mathcal{S}(\mathbf{u}|_\Gamma)_\parallel - \mathbf{f}_\parallel,(\dot{\mathbf{v}}-\dot{\mathbf{u}}_\parallel|_\Gamma)\rangle_{0,\Gamma_C,(0,T]}+ j(\mathbf{v}) - j(\mathbf{u}|_\Gamma) = \langle\mathcal{S}(\mathbf{u}_\parallel|_\Gamma),\dot{\mathbf{v}}_\parallel\rangle_{0,\Gamma_C,(0,T]}+ j(\mathbf{v})\,.$$
Using the first inequality in \eqref{scontactbc}, $$\langle\mathcal{S}(\mathbf{u}|_\Gamma)_\parallel,\dot{\mathbf{v}}_\parallel\rangle_{0,\Gamma_C,(0,T]} \geq -
\langle {\cal F},\|\dot{\bf v}_\parallel\| \rangle_{0,\Gamma_C,(0,T]}= -j(\mathbf{v}),$$ so that
$$ \langle\mathcal{S}(\mathbf{u}_\parallel|_\Gamma),\dot{\mathbf{v}}_\parallel\rangle_{0,\Gamma_C,(0,T]}+ j(\mathbf{v}) \geq 0.$$

Summarizing, we have proved that
$$\langle\mathcal{S}(\mathbf{u}|_\Gamma)-\mathbf{f},\partial_{t,\parallel}(\mathbf{v}-\mathbf{u}|_\Gamma)\rangle_{0,\Gamma_\Sigma,(0,T]} + j(\mathbf{v}) - j(\mathbf{u}|_\Gamma)\geq 0,$$
and \eqref{eq:VarIneq} follows.\\

We now show the converse assertion, that a sufficiently smooth solution ${\bf u}|_\Gamma$ of \eqref{eq:VarIneq} leads to a solution ${\bf u}$ of the frictional contact problem \eqref{prob:strong_formulation}, \eqref{fullcontactbc1}, \eqref{fullcontactbc2}.
First note that the  solution $\mathbf{u}$ in the domain $(0,T]\times  \Omega$ can be recovered from its boundary values on $(0,T]\times  \Gamma$ (see the representation formula \eqref{representation formula} in Section \ref{birs}). We therefore only need to show that the solution $\mathbf{u}$ satisfies the boundary conditions.\\
The homogeneous Dirichlet condition on $\Gamma_D$ is satisfied because $\textbf{u} \in \mathcal{C}$.
 To obtain the traction boundary conditions, choose $\mathbf{v} = \mathbf{u}|_\Gamma + \mathbf{w} \in \mathcal{C}$, for $\mathbf{w}$ { with $w_\perp \geq g-u_\perp$  on $(0,T]\times  \Gamma_C$}.
Then \eqref{eq:VarIneq} implies
\begin{equation}\label{eq:auxproof1}\langle\mathcal{S}(\mathbf{u}|_\Gamma),\partial_{t,\parallel}\mathbf{w}\rangle_{0,\Gamma_\Sigma,(0,T]} + j(\mathbf{u}|_\Gamma + \mathbf{w}) - j(\mathbf{u}|_\Gamma) \geq \langle\mathbf{f},\partial_{t,\parallel}\mathbf{w}\rangle_{0,\Gamma_\Sigma,(0,T]}.\end{equation}
Similarly, with
$\mathbf{v} = \mathbf{u}|_\Gamma - \mathbf{w} \in \mathcal{C}$, with $w_\perp \leq u_\perp-g$ on $(0,T]\times  \Gamma_C$, 
 we obtain
\begin{equation}\label{eq:auxproof2}
-\langle\mathcal{S}(\mathbf{u}|_\Gamma),\partial_{t,\parallel}\mathbf{w}\rangle_{0,\Gamma_\Sigma,(0,T]} + j(\mathbf{u}|_\Gamma - \mathbf{w}) - j(\mathbf{u}|_\Gamma) \geq -\langle\mathbf{f},\partial_{t,\parallel}\mathbf{w}\rangle_{0,\Gamma_\Sigma,(0,T]}.\end{equation}
When we consider $\mathbf{w}=0$ a.e. on $(0,T]\times  \Gamma_C$ in \eqref{eq:auxproof1}, \eqref{eq:auxproof2}, we conclude that
$$\langle\mathcal{S}(\mathbf{u}|_\Gamma),\partial_{t,\parallel}\mathbf{w}\rangle_{0,\Gamma_N,(0,T]} = \langle\mathbf{f},\partial_{t,\parallel}\mathbf{w}\rangle_{0,\Gamma_N,(0,T]} $$
and therefore the traction boundary condition \eqref{prob:strong_formulation_1}: $\mathbf{p} = \mathcal{S}(\mathbf{u}|_\Gamma) = \mathbf{f}$  on $\Gamma_N$ holds.\\

To obtain the contact boundary conditions on $\Gamma_C$ we first verify the non-penetration condition \eqref{scontactbc2} in the normal component. Note that $u_\perp\geq g$ on $\Gamma_C$ because $\mathbf{u}|_\Gamma \in \mathcal{C}$. Now choose $\mathbf{w}_\parallel = \mathbf{0}$, $w_\perp \geq 0$ a.e. on $(0,T]\times \Gamma_C$ in \eqref{eq:auxproof1}. 
As we already know $\mathcal{S}(\mathbf{u}|_\Gamma)=\mathbf{f}$ on $\Gamma_N$, we  obtain 
$$0\geq \langle-\mathcal{S}(\mathbf{u}|_\Gamma)+\mathbf{f},\partial_{t,\parallel}\mathbf{w}\rangle_{0,\Gamma_\Sigma,(0,T]} = \langle-\mathcal{S}(\mathbf{u}|_\Gamma)_\perp+f_\perp,w_\perp\rangle_{0,\Gamma_C,(0,T]}.$$
The assertion $\mathcal{S}(\mathbf{u}|_\Gamma)_\perp\geq f_\perp$ a.e. on $(0,T] \times \Gamma_C$ immediately follows. 

The remaining condition in \eqref{scontactbc2} is obtained by choosing $\mathbf{w}$ such that $\mathbf{w}=  (g-u_\perp|_\Gamma)\mathbf{n}$ a.e. on $(0,T]\times  \Gamma_C$. Then \eqref{eq:auxproof1}, \eqref{eq:auxproof2} imply
\begin{equation} \label{eq:auxproof3}
0 = \langle-\mathcal{S}(\mathbf{u}|_\Gamma)_\perp+f_\perp,g-u_\perp|_\Gamma\rangle_{0,\Gamma_C,(0,T]}.  \end{equation}
From above we recall that $g-u_\perp|_\Gamma\leq 0$ and $-\mathcal{S}(\mathbf{u}|_\Gamma)_\perp+f_\perp \leq 0$ a.e., so that  from \eqref{eq:auxproof3} we conclude $$0= (-\mathcal{S}(\mathbf{u}|_\Gamma)_\perp+f_\perp)(g-u_\perp|_\Gamma)$$ a.e. on $(0,T]\times \Gamma_C$, i.e.~the equality in \eqref{scontactbc2}.\\

We finally verify the friction conditions \eqref{scontactbc} on $(0,T]\times  \Gamma_C$. Recall the Neumann condition and, from below \eqref{prob:strong_formulation}, that $\mathbf{f}_\parallel = \mathbf{0}$ there. We again use \eqref{eq:auxproof1}, here with $\mathbf{w}$ such that $w_\perp = 0$, as well as the definition of the functional $j$: 
$$\langle\mathcal{S}(\mathbf{u}|_\Gamma)_\parallel,\dot{\mathbf{w}}_\parallel\rangle_{0,\Gamma_C,(0,T]} + \langle {\cal F},\|(\dot{\mathbf{u}}|_\Gamma + \dot{\mathbf{w}})_\parallel\| - \|\dot{\mathbf{u}}_\parallel|_\Gamma\| \rangle_{0,\Gamma_C,(0,T]}
\geq 0.$$
From the triangle inequality, $\|(\dot{\mathbf{u}}|_\Gamma + \dot{\mathbf{w}})_\parallel\| - \|\dot{\mathbf{u}}_\parallel|_\Gamma\| \leq \|\dot{\mathbf{w}}_\parallel\|$, so that
$$ \langle {\cal F},\|\dot{\mathbf{w}}_\parallel\| \rangle_{0,\Gamma_C,(0,T]}
\geq -\langle\mathcal{S}(\mathbf{u}|_\Gamma)_\parallel,\dot{\mathbf{w}}_\parallel\rangle_{0,\Gamma_C,(0,T]}.$$
An analogous argument with \eqref{eq:auxproof2}, instead of \eqref{eq:auxproof1}, leads to
$$ 
\langle {\cal F},\|\dot{\mathbf{w}}_\parallel\| \rangle_{0,\Gamma_C,(0,T]}
\geq \langle\mathcal{S}(\mathbf{u}|_\Gamma)_\parallel,\dot{\mathbf{w}}_\parallel\rangle_{0,\Gamma_C,(0,T]}.$$
We conclude the first condition in \eqref{scontactbc}: $\| \mathcal{S}(\mathbf{u}|_\Gamma)_\parallel \|  \leq \mathcal{F}$  a.e. on $(0,T]\times  \Gamma_C$.

The second condition in \eqref{scontactbc} then follows from \eqref{eq:auxproof1}, \eqref{eq:auxproof2} with $\mathbf{w} = \mathbf{u}_\parallel$. Indeed, from \eqref{eq:auxproof1} we have that $\langle\mathcal{S}(\mathbf{u}|_\Gamma)_\parallel-\mathbf{f}_\parallel,\dot{\mathbf{u}}_\parallel\rangle_{0,\Gamma_\Sigma
,(0,T]} + j(\mathbf{u}|_\Gamma)\geq 0$, while \eqref{eq:auxproof2} implies the opposite inequality, $\langle\mathcal{S}(\mathbf{u}|_\Gamma)_\parallel-\mathbf{f}_\parallel,\dot{\mathbf{u}}_\parallel\rangle_{0,\Gamma_\Sigma
,(0,T]} + j(\mathbf{u}|_\Gamma)\leq 0$. As $\mathcal{S}(\mathbf{u}|_\Gamma) = \mathbf{f}$ on $(0,T]\times \Gamma_N$ and $\mathbf{f}_\parallel = \mathbf{0}$, $\| \mathcal{S}(\mathbf{u}|_\Gamma)_\parallel \|  \leq \mathcal{F}$ on $(0,T]\times \Gamma_C$, we obtain that $\mathcal{S}(\mathbf{u}|_\Gamma)_\parallel \cdot\dot{\mathbf{u}}_\parallel+\mathcal{F}\,\| \dot{\mathbf{u}}_\parallel \|=0$ a.e. on $(0,T]\times  \Gamma_C$\\

Therefore, we proved that a sufficiently regular solution of \eqref{eq:VarIneq} leads to a solution $\mathbf{u}$ of the frictional contact problem \eqref{prob:strong_formulation}, \eqref{fullcontactbc1}, \eqref{fullcontactbc2}.
\end{proof}

For the numerical approximation of the frictional contact problem we consider a mixed formulation of the variational inequality \eqref{eq:VarIneq}, which involves both the displacement and the contact forces. A precise statement, amenable to the discretization and error analysis below, requires us to introduce appropriate function spaces. 
Based on the scalar products \eqref{0_product}, \eqref{sigma_product}, we define space-time Sobolev spaces $H^r(\mathcal{I},\tilde{H}^{s}({\Gamma'}))$ on subsets ${\Gamma'} \subset \Gamma$ and for time intervals $\mathcal{I}=[0,T],\mathbb{R}^+$, which are introduced in Appendix.\\

The mixed formulation of problem \eqref{eq:VarIneq} relies on the closed convex set
\begin{align}
M^+(\mathcal{F})&:=\left\lbrace \pmb{\mu} \in H^{1/2}([0,T],\tilde{H}^{-1/2}(\Gamma_C))^d: \right.\nonumber \\&\qquad \left.\left\langle \pmb{\mu},\mathbf{w}\right\rangle_{0,\Gamma_C,(0,T]} \leq \left\langle\mathcal{F}, \|\textbf{w}_\parallel\|\right\rangle_{0,\Gamma_C,(0,T]},\, \forall \ \mathbf{w} \in H^{-1/2}([0,T],H^{1/2}(\Gamma_\Sigma))^d, w_\perp\leq 0 
\right\rbrace\,.
\end{align}

Let us observe that the above definition of $M^+(\mathcal{F})$ implies $\left\langle \mu_\perp,w_\perp\right\rangle_{0,\Gamma_C,(0,T]} \leq 0$, a weak formulation of the  inequality $\mu_\perp\geq 0$. Moreover, it can be shown that $\left\langle \pmb{\mu}_\parallel,\mathbf{w}_\parallel\right\rangle_{0,\Gamma_C,(0,T]} \leq \left\langle\mathcal{F}, \|\textbf{w}_\parallel\|\right\rangle_{0,\Gamma_C,(0,T]}$, which is a weak formulation of the  inequality $\|\pmb{\mu}_\parallel\| \leq \mathcal{F}$.

Now, let us show that \begin{equation}\label{eq:lambdamp}{\pmb{\lambda}:= \mathcal{S}{\bf u}- {\bf f}} \in M^+(\mathcal{F}).
\end{equation}
At first note that because of the traction boundary condition $\pmb{\lambda} = \mathbf{0}$ in $(0,T]\times \Gamma_N$. 
Next, we show $\left\langle \lambda_\perp,w_\perp\right\rangle_{0,\Gamma_C,(0,T]} \leq 0$ for all $w_\perp \leq 0$. This is a weak formulation of the non-penetration boundary condition $p_\perp\geq {f}_\perp$, and the proof is similar to Proposition \ref{prop:varineq}. In fact,
choose ${\bf v} = {\bf u} - {\bf w}$ with ${\bf w}_\parallel = {\bf 0}$  in \eqref{eq:VarIneq} and note that $u_\perp - w_\perp \geq g$; we obtain $$-\left\langle \lambda_\perp,w_\perp\right\rangle_{0,\Gamma_C,(0,T]} = -\langle {(\mathcal{S} {\bf u})_\perp}- f_\perp, w_\perp\rangle_{0,\Gamma_C,(0,T]} \geq 0,$$ or $\langle \lambda_\perp, w_\perp\rangle_{0,\Gamma_C,(0,T]} \leq 0$. 
To show \eqref{eq:lambdamp}, it remains to prove a weak formulation of the friction boundary condition, 
i.e.~that for all $\mathbf{w}_\parallel$: 
\begin{equation}\label{tesi}
\left\langle \pmb{\lambda}_\parallel,\mathbf{w}_\parallel\right\rangle_{0,\Gamma_C,(0,T]} = \left\langle (\mathcal{S} {\bf u})_\parallel,\mathbf{w}_\parallel\right\rangle_{0,\Gamma_C,(0,T]} \leq \left\langle\mathcal{F}, \|\textbf{w}_\parallel\|\right\rangle_{0,\Gamma_C,(0,T]}, 
\end{equation}
recalling that $\mathbf{f}_\parallel=\mathbf{0}$ on $(0,T]\times  \Gamma_C$.
For this, choose ${\bf v} = {\bf u} - {\bf w}$ with $w_\perp =0$ in \eqref{eq:VarIneq} and use $\|({\bf u}-{\bf w})_\parallel\|- \|{\bf u}_\parallel\| \leq \|{\bf w}_\parallel\|$. We obtain \begin{align*}&-\langle {(\mathcal{S} {\bf u})_\parallel}, {\bf w}_\parallel\rangle_{0,\Gamma_C,(0,T]} + \langle {\cal F},\|\mathbf{w}_\parallel\| \rangle_{0,\Gamma_C,(0,T]}\,
\\&\qquad \qquad\geq -\langle {(\mathcal{S} {\bf u})_\parallel}, {\bf w}_\parallel\rangle_{0,\Gamma_C,(0,T]} + \langle {\cal F},\|({\bf u}-{\bf w})_\parallel\|- \|{\bf u}_\parallel\| \rangle_{0,\Gamma_C,(0,T]}\,
\geq 0,\end{align*} from which \eqref{tesi} follows. Altogether, we conclude \eqref{eq:lambdamp}. \\

With $M^+(\mathcal{F})$ as set of Lagrange multipliers, the mixed formulation of the variational inequality \eqref{eq:VarIneq} reads, for data $g \in H^{1/2}([0,T],H^{1/2}(\Gamma_C))$ and $\textbf{f}\in H^{1/2}([0,T],\tilde{H}^{-1/2}(\Gamma_\Sigma))^d$:\\ 

\textit{find} $(\textbf{u},{\pmb{\lambda}}) \in H^{1/2}([0,T],\tilde{H}^{1/2}(\Gamma_\Sigma))^d \times M^+(\mathcal{F})$ \textit{such that} 
\begin{subequations} \label{eq:MixedProblem}
 \begin{alignat}{2}
\left\langle \mathcal{S}\textbf{u},\textbf{v} \right\rangle_{0,\Gamma_\Sigma,(0,T]} - {\left\langle \pmb{\lambda},\mathbf{v} \right\rangle_{0,\Gamma_C,(0,T]}} &= \left\langle \textbf{f},\textbf{v} \right\rangle_{0,\Gamma_\Sigma,(0,T]} &\quad &\forall \textbf{v}\in H^{1/2}([0,T],\tilde{H}^{1/2}(\Gamma_\Sigma))^d \label{eq:WeakMixedVarEq}\\
{\left\langle \pmb{\mu} -\pmb{\lambda},\partial_{t,\parallel}\bf{u} \right\rangle_{0,\Gamma_C,(0,T]}} & \geq \left\langle g,\mu_\perp-\lambda_\perp \right\rangle_{0,\Gamma_C,(0,T]} &\quad &\forall \pmb{\mu} \in M^+(\mathcal{F}) .\label{eq:ContContactConstraints}
\end{alignat}
\end{subequations}

\begin{theorem}\label{equivtheorem:unilateral}
The mixed formulation \eqref{eq:MixedProblem}, the variational inequality \eqref{eq:VarIneq} and the unilateral frictional contact problem \eqref{prob:strong_formulation}, \eqref{fullcontactbc1}, \eqref{fullcontactbc2} are equivalent.
\end{theorem}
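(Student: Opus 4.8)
The plan is to prove the chain of equivalences by combining Proposition \ref{prop:varineq}, which already identifies the variational inequality \eqref{eq:VarIneq} with the strong formulation, with a direct equivalence between the mixed formulation \eqref{eq:MixedProblem} and the variational inequality \eqref{eq:VarIneq}. Thus the only new work is to establish: (i) if $(\textbf{u},\pmb{\lambda})$ solves \eqref{eq:MixedProblem}, then $\textbf{u} \in \mathcal{C}$ and $\textbf{u}$ solves \eqref{eq:VarIneq}; and conversely (ii) if $\textbf{u}$ solves \eqref{eq:VarIneq}, then setting $\pmb{\lambda}:=\mathcal{S}\textbf{u}-\textbf{f}$ yields a solution of \eqref{eq:MixedProblem}. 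The converse direction (ii) is essentially already done in the text preceding the theorem: the computation showing $\pmb{\lambda} = \mathcal{S}\textbf{u} - \textbf{f} \in M^+(\mathcal{F})$ gives membership, \eqref{eq:WeakMixedVarEq} is immediate from the definition of $\pmb{\lambda}$ together with $\pmb{\lambda} = \textbf{0}$ on $\Gamma_N$, and \eqref{eq:ContContactConstraints} follows by testing \eqref{eq:VarIneq} with $\textbf{v} = \textbf{u} + \textbf{w}$ where $\textbf{w}$ encodes the direction $\pmb{\mu} - \pmb{\lambda}$ and using the saddle-point structure.

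\textbf{Key steps for direction (i).} Starting from a solution $(\textbf{u},\pmb{\lambda})$ of \eqref{eq:MixedProblem}, I would first recover the non-penetration constraint $u_\perp \geq g$ a.e.\ on $(0,T]\times\Gamma_C$, hence $\textbf{u} \in \mathcal{C}$: this is the standard argument for mixed formulations of constrained problems, obtained by choosing test multipliers $\pmb{\mu} = \pmb{\lambda} + \pmb{\nu}$ in \eqref{eq:ContContactConstraints} with $\pmb{\nu}$ ranging over the admissible cone (so that $\pmb{\mu} \in M^+(\mathcal{F})$), which forces $\langle \nu_\perp, u_\perp - g\rangle_{0,\Gamma_C,(0,T]} \geq 0$ for all such $\pmb{\nu}$ and therefore the pointwise inequality. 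Next, I would identify the complementarity relation by choosing $\pmb{\mu} = \textbf{0}$ and $\pmb{\mu} = 2\pmb{\lambda}$ in \eqref{eq:ContContactConstraints} (both valid since $M^+(\mathcal{F})$ is a cone containing $\textbf{0}$), which pins down $\langle \pmb{\lambda}, \partial_{t,\parallel}\textbf{u}\rangle_{0,\Gamma_C,(0,T]} = \langle g, \lambda_\perp\rangle_{0,\Gamma_C,(0,T]}$, i.e.\ $\langle \lambda_\perp, u_\perp - g\rangle_{0,\Gamma_C,(0,T]} = -\langle \pmb{\lambda}_\parallel, \dot{\textbf{u}}_\parallel\rangle_{0,\Gamma_C,(0,T]}$. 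Then, given any $\textbf{v} \in \mathcal{C}$, I would take $\textbf{v} - \textbf{u}$ as test function in \eqref{eq:WeakMixedVarEq} and add; the $\mathcal{S}\textbf{u} - \textbf{f}$ term becomes $\langle \pmb{\lambda}, \textbf{v} - \textbf{u}\rangle_{0,\Gamma_C,(0,T]}$ after splitting $\Gamma_\Sigma = \Gamma_N \cup \Gamma_C$ and using $\pmb{\lambda}=\textbf{0}$ on $\Gamma_N$ (the latter itself following from \eqref{eq:WeakMixedVarEq} with test functions supported on $\Gamma_N$). Splitting into normal and tangential parts, using $v_\perp - g \geq 0$ together with $\langle \mu_\perp, v_\perp - g\rangle_{0,\Gamma_C,(0,T]} \geq 0$ for $\pmb{\mu}\in M^+(\mathcal{F})$ (in particular for a suitable $\pmb{\mu}$ built from $\pmb{\lambda}$), the complementarity relation above, and the defining inequality $\langle \pmb{\lambda}_\parallel, \textbf{w}_\parallel\rangle \leq \langle \mathcal{F}, \|\textbf{w}_\parallel\|\rangle$ applied to $\textbf{w} = \dot{\textbf{v}}_\parallel$, one assembles exactly \eqref{eq:VarIneq}.

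\textbf{Main obstacle.} The subtle point — and the step I expect to require the most care — is the handling of the time-derivative operator $\partial_{t,\parallel}$ and the low regularity of the spaces involved: the pairing $\langle \pmb{\lambda}, \partial_{t,\parallel}\textbf{u}\rangle_{0,\Gamma_C,(0,T]}$ mixes an $H^{-1/2}$-in-time multiplier with the time-derivative of an $H^{1/2}$-in-time displacement, so the manipulations above (integration by parts in time, the substitution $\textbf{w} = \dot{\textbf{v}}_\parallel$, and the reduction from weak cone inequalities to pointwise inequalities like $\|\pmb{\lambda}_\parallel\|\leq\mathcal{F}$) must be justified by density of smooth functions and the mapping/duality properties of $\mathcal{S}$ recalled from \cite{ourpaper2}. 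A clean way to organize this is to first prove everything for smooth data and solutions, where all pairings are classical integrals and Proposition \ref{prop:varineq} applies verbatim, and then pass to the limit; alternatively one argues directly at the level of the bilinear pairings, noting that the equivalence of \eqref{eq:MixedProblem} and \eqref{eq:VarIneq} is purely algebraic once the cone structure of $M^+(\mathcal{F})$ and the characterization $\pmb{\lambda}=\mathcal{S}\textbf{u}-\textbf{f}$ are in hand. I would present the proof in the latter, algebraic style, citing the preceding computation \eqref{eq:lambdamp} for one direction and spelling out the cone-testing arguments for the other, and remark that the equivalence with the strong formulation is then inherited from Proposition \ref{prop:varineq}.
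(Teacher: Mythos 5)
There is a genuine gap in your direction (i), and it sits exactly where you lean on the cone structure of the multiplier set. The set $M^+(\mathcal{F})$ is \emph{not} a cone when $\mathcal{F}>0$: its normal part ($\mu_\perp\geq 0$ weakly) is a cone, but its tangential part is the ball $\|\pmb{\mu}_\parallel\|\leq\mathcal{F}$ (weakly), which is not scale-invariant. Hence $2\pmb{\lambda}$ is in general not an admissible test multiplier in \eqref{eq:ContContactConstraints} — precisely in the interesting regime of frictional sliding one expects $\|\pmb{\lambda}_\parallel\|=\mathcal{F}$ on part of $\Gamma_C$ — and the identity you want to extract, $\langle \pmb{\lambda},\partial_{t,\parallel}\mathbf{u}\rangle_{0,\Gamma_C,(0,T]}=\langle g,\lambda_\perp\rangle_{0,\Gamma_C,(0,T]}$, is actually \emph{false} for the true solution whenever $j(\mathbf{u})>0$: the correct relations are $\langle\lambda_\perp,u_\perp-g\rangle_{0,\Gamma_C,(0,T]}=0$ and $\langle\pmb{\lambda}_\parallel,\dot{\mathbf{u}}_\parallel\rangle_{0,\Gamma_C,(0,T]}=-j(\mathbf{u})$, so their sum equals $-j(\mathbf{u})\neq 0$. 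Moreover, even if you only kept the inequality obtained from $\pmb{\mu}=\mathbf{0}$ (which \emph{is} admissible), the final assembly of \eqref{eq:VarIneq} does not close: bounding $\langle\lambda_\perp,v_\perp-g\rangle\geq 0$ and $\langle\pmb{\lambda}_\parallel,\dot{\mathbf{v}}_\parallel\rangle+j(\mathbf{v})\geq 0$ leaves you with the lower bound $-\big[\langle\lambda_\perp,u_\perp-g\rangle+\langle\pmb{\lambda}_\parallel,\dot{\mathbf{u}}_\parallel\rangle+j(\mathbf{u})\big]$, and the combined complementarity only controls the first two terms, so you are left with $\geq -j(\mathbf{u})$, not $\geq 0$. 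You need the two complementarity relations \emph{separately}, which requires test multipliers that vary the normal and tangential components independently — this is what the paper does: take $\pmb{\mu}_\parallel=\pmb{\lambda}_\parallel$ with $\mu_\perp\in\{0,\,2\lambda_\perp,\,\lambda_\perp+\widetilde{\mu}_\perp\}$ (the normal constraint alone is a cone, so these are admissible) to get $u_\perp\geq g$ and $\langle\lambda_\perp,u_\perp-g\rangle=0$, and take $\mu_\perp=\lambda_\perp$ with $\pmb{\mu}_\parallel=-\mathcal{F}\dot{\mathbf{u}}_\parallel/\|\dot{\mathbf{u}}_\parallel\|$ and the companion choice to get the friction complementarity.

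Beyond this, note that your overall route differs from the paper's: you propose to prove mixed $\Leftrightarrow$ VI directly and invoke Proposition \ref{prop:varineq} only for VI $\Leftrightarrow$ strong form, whereas the paper proves strong $\Rightarrow$ mixed (using the pointwise contact conditions \eqref{fullcontactbc1}, \eqref{fullcontactbc2} and the computation \eqref{eq:lambdamp}) and mixed $\Rightarrow$ strong directly. Your route is viable in principle, but be aware that your direction (ii) as sketched ("testing \eqref{eq:VarIneq} with $\mathbf{v}=\mathbf{u}+\mathbf{w}$ encoding $\pmb{\mu}-\pmb{\lambda}$") cannot produce \eqref{eq:ContContactConstraints} by a single choice of test function, since there $\pmb{\mu}$ varies and $\mathbf{u}$ is fixed; in practice you must first extract the complementarity conditions from the VI (essentially re-running part of the proof of Proposition \ref{prop:varineq}) and then verify \eqref{eq:ContContactConstraints} as the paper does. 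Once direction (i) is repaired with the separated multiplier choices above, your argument becomes essentially the paper's proof rearranged.
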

\begin{proof}

The equivalence of the variational inequality and the unilateral frictional contact problem follows from Proposition \ref{prop:varineq}. 

We show that if $\mathbf{u}$ is a solution to the unilateral frictional contact problem \eqref{prob:strong_formulation}, \eqref{fullcontactbc1}, \eqref{fullcontactbc2}, then $(\mathbf{u}|_\Gamma, \mathcal{S}(\mathbf{u}|_\Gamma)-\mathbf{f})$ is a solution to the mixed formulation \eqref{eq:MixedProblem}. Indeed, $\pmb{\lambda}=\mathcal{S}(\mathbf{u}|_\Gamma)-\mathbf{f} \in M^+(\mathcal{F})$ 
as shown above
and  \eqref{eq:WeakMixedVarEq} is satisfied by definition of $\pmb{\lambda}$. To see \eqref{eq:ContContactConstraints}, we show that for all $\pmb{\mu} \in M^+(\mathcal{F})$
$$\left\langle \pmb{\mu}_\parallel -\pmb{\lambda}_\parallel,\dot{\bf{u}}_\parallel \right\rangle_{0,\Gamma_C,(0,T]}  \geq 0 \quad \text{and} \quad \left\langle \mu_\perp -\lambda_\perp,u_\perp-g \right\rangle_{0,\Gamma_C,(0,T]}\geq 0.$$
For the inequality involving the parallel components, observe that $|\left\langle \pmb{\mu}_\parallel,\dot{\bf{u}}_\parallel \right\rangle_{0,\Gamma_C,(0,T]}|  \leq \left\langle \mathcal{F},\|\dot{\bf{u}}_\parallel\| \right\rangle_{0,\Gamma_C,(0,T]}$, because $\pmb{\mu} \in M^+(\mathcal{F})$. From \eqref{fullcontactbc2} and remembering that $\mathbf{f}_\parallel=\mathbf{0}$, $\left\langle -\pmb{\lambda}_\parallel,\dot{\bf{u}}_\parallel \right\rangle_{0,\Gamma_C,(0,T]} =\left\langle \mathcal{F},\|\dot{\bf{u}}_\parallel\| \right\rangle_{0,\Gamma_C,(0,T]}$. Combining these estimates, the claimed inequality follows: $$\left\langle \pmb{\mu}_\parallel -\pmb{\lambda}_\parallel,\dot{\bf{u}}_\parallel \right\rangle_{0,\Gamma_C,(0,T]} \geq -|\left\langle\pmb{\mu}_\parallel ,\dot{\bf{u}}_\parallel \right\rangle_{0,\Gamma_C,(0,T]}| + \left\langle \mathcal{F},\|\dot{\bf{u}}_\parallel\| \right\rangle_{0,\Gamma_C,(0,T]} \geq 0.$$
For the  inequality involving the perpendicular component, recall that  $ \lambda_\perp (u_\perp-g) =0$ and $u_\perp-g \geq 0$ from \eqref{fullcontactbc1}. Hence, from $\pmb{\mu} \in M^+(\mathcal{F})$ we conclude  $$\left\langle \mu_\perp -\lambda_\perp,u_\perp-g \right\rangle_{0,\Gamma_C,(0,T]} = \left\langle \mu_\perp,u_\perp-g \right\rangle_{0,\Gamma_C,(0,T]}\geq 0.$$
This shows that $(\mathbf{u}|_\Gamma, \mathcal{S}(\mathbf{u}|_\Gamma)-\mathbf{f})$ is indeed a solution to the mixed formulation \eqref{eq:MixedProblem}.\\

It remains to show the converse, i.e.~that a solution to the mixed formulation \eqref{eq:MixedProblem} corresponds to a solution to the unilateral frictional contact problem \eqref{prob:strong_formulation}, \eqref{fullcontactbc1}, \eqref{fullcontactbc2}. This is shown similarly to the proof of Theorem 14 in \cite{contact} for the wave equation.\\
First, a solution $\mathbf{u}$ in the domain $(0,T]\times  \Omega$ can be recovered from its boundary values on $(0,T]\times  \Gamma$ (see the representation formula \eqref{representation formula} in Section \ref{birs}). We now show that the solution $\mathbf{u}$ satisfies the boundary conditions. 

The null Dirichlet condition \eqref{prob:strongformulation_d} is immediate from the chosen function spaces in the mixed formulation, while the traction boundary condition \eqref{prob:strong_formulation_1} in $(0,T]\times  \Gamma_N$ follows from \eqref{eq:WeakMixedVarEq}, because $\pmb{\lambda}=\mathbf{0}$ there. For the contact conditions, $\pmb{\lambda} \in M^+(\mathcal{F})$ implies $\|\mathbf{p}_\parallel\|\leq \mathcal{F}$ and $p_\perp \geq f_\perp$. It remains to show
$$u_\perp \geq g, \quad (p_\perp-f_\perp)(u_\perp-g)=0\quad \text{and} \quad \mathbf{p}_\parallel\cdot \dot{\mathbf{u}}_\parallel + \mathcal{F}\|\dot{\mathbf{u}}_\parallel\|=0.$$ 
First, choosing $\pmb{\mu} \in M^+(\mathcal{F})$ with $\pmb{\mu}_\parallel =\pmb{\lambda}_\parallel$ in \eqref{eq:ContContactConstraints}, we obtain $\left\langle \mu_\perp -\lambda_\perp,u_\perp-g \right\rangle_{0,\Gamma_C,(0,T]}\geq 0$. 
With the further choice $\mu_\perp = \lambda_\perp + \widetilde{\mu}_\perp$, we find $\left\langle \widetilde{\mu}_\perp, u_\perp-g \right\rangle_{0,\Gamma_C,(0,T]}\geq 0$ for all $\widetilde{\mu}_\perp\geq 0$, hence $u_\perp-g \geq 0$ a.e. on $(0,T]\times  \Gamma_C$.

Similarly, with $\mu_\perp = 0$, respectively $\mu_\perp = 2\lambda_\perp$, we find $\mp \left\langle \lambda_\perp,u_\perp-g \right\rangle_{0,\Gamma_C,(0,T]}\geq 0$, and therefore $\left\langle \lambda_\perp,u_\perp-g \right\rangle_{0,\Gamma_C,(0,T]}= 0$. As both $\lambda_\perp\geq 0$ and $u_\perp-g \geq 0$ we conclude $\lambda_\perp(u_\perp-g) = 0$ a.e. on $(0,T]\times  \Gamma_C$. 

To see the remaining identity, $\mathbf{p}_\parallel\cdot \dot{\mathbf{u}}_\parallel + \mathcal{F}\|\dot{\mathbf{u}}_\parallel\|=0$, we may assume $\dot{\mathbf{u}}_\parallel \neq \mathbf{0}$.  Choose $\pmb{\mu} \in M^+(\mathcal{F})$ with $\mu_\perp =\lambda_\perp$ and $\pmb{\mu}_\parallel = - \mathcal{F} \frac{\dot{\bf{u}}_\parallel}{\|\dot{\bf{u}}_\parallel\|}$, respectively $\pmb{\mu}_\parallel = 2 \pmb{\lambda}_\parallel+ \mathcal{F} \frac{\dot{\bf{u}}_\parallel}{\|\dot{\bf{u}}_\parallel\|}$, in \eqref{eq:ContContactConstraints}, to obtain $$0\leq \left\langle \pmb{\mu}_\parallel -\pmb{\lambda}_\parallel,\dot{\bf{u}}_\parallel \right\rangle_{0,\Gamma_C,(0,T]} = \textstyle{\mp\left\langle \mathcal{F} \frac{\dot{\bf{u}}_\parallel}{\|\dot{\bf{u}}_\parallel\|} +\textbf{p}_\parallel,\dot{\bf{u}}_\parallel \right\rangle_{0,\Gamma_C,(0,T]}}.$$ This shows $0=\left\langle \mathcal{F} \frac{\dot{\bf{u}}_\parallel}{\|\dot{\bf{u}}_\parallel\|} +\textbf{p}_\parallel,\dot{\bf{u}}_\parallel \right\rangle_{0,\Gamma_C,(0,T]}$, and therefore $0=\mathcal{F}\|\dot{\mathbf{u}}_\parallel\|+\mathbf{p}_\parallel\cdot \dot{\mathbf{u}}_\parallel$ a.e. on $(0,T]\times  \Gamma_C$. 
\end{proof}

In spite of the interest in dynamic contact problems, their rigorous mathematical analysis is widely open. Even the existence of solutions is only known for certain dissipative materials or for not perfectly rigid, but dissipative obstacles, see e.g.~\cite{cocou}. Without dissipation the existence of a solution has been proven for simplified problems involving the scalar wave equation in special geometries \cite{cooper, lebeau}, without friction. The analysis of \cite{cooper} served as a starting point for the development of boundary element methods for the dynamic Signorini problem in \cite{contact}.

\subsection{Discretization} 

To solve 
the mixed formulation \eqref{eq:MixedProblem}, in a discretized form,  we  consider a uniform decomposition of the time interval $[0,T]$ with time step $\Delta t=\frac{T}{N_{\Delta t}}$, $N_{\Delta t}\in\mathbb{N}^{+}$, generated by the time instants $t_{\ell}=\ell\Delta t$, $\ell=0,\ldots,N_{\Delta t}$. We define the corresponding spaces
\begin{equation}\label{time_set}
\begin{array}{ll}
 V^{-1}_{\Delta t}=&\left\lbrace v_{\Delta t}\in L^2([0,T])
\: : \: {v_{\Delta t}}_{\vert_{[t_\ell,t_{\ell+1}]}}\in \mathcal{P}_0, \:\forall \ell=0,..., N_{\Delta t}-1 \right\rbrace ,\\[4pt]
V^0_{\Delta t}=&\left\lbrace r_{\Delta t}\in C^0([0,T])
\: : \: {r_{\Delta t}}_{\vert_{[t_\ell,t_{\ell+1}]}}\in \mathcal{P}_1, \:\forall \ell=0,..., N_{\Delta t}-1,\: v(0)=0  \right\rbrace,
\end{array}
\end{equation}
where $\mathcal{P}_s$, $s\geq 0$, is the space of the algebraic polynomials of degree $s$. For the space discretization with $d=2$, we introduce a boundary mesh constituted by a set of straight line segments $\mathcal{T}=\left\lbrace e_1,... ,e_M \right\rbrace$ such that $h_i:=length(e_i)\leqslant h$, $e_i\cap e_j=\emptyset$ if $i\neq j$ and $\cup_{i=1}^M \overline{e}_i=\overline{\Gamma}$ if $\Gamma$ is polygonal, or a suitably fine approximation of $\Gamma$ otherwise. 
For $d=3$, we assume that $\Gamma$ is triangulated by  $\mathcal{T}=\{e_1,\cdots,e_{M}\}$, with $h_i:=diam(e_i)\leqslant h$, $e_i\cap e_j=\emptyset$ if $i\neq j$ and, if $\overline{e_i}\cap \overline{e_j} \neq \emptyset$, the intersection is either an edge or a vertex of both triangles. On $\mathcal{T}$  we consider the spaces of piecewise polynomial functions 
\begin{equation}\label{pol_space_4_discontinuous_f}
X^{-1}_{h,\Gamma}=\left\lbrace w_h\in L^2(\Gamma)\: : \: w_h\vert_{e_i}\in \mathcal{P}_s,\: e_i\in \mathcal{T}  \right\rbrace\subset H^{-1/2}(\Gamma), \quad X^{-1}_{h,\Gamma'} = X^{-1}_{h,\Gamma} \cap \widetilde{H}^{-1/2}(\Gamma'), 
\end{equation}
\begin{equation}\label{pol_space_4_continuous_f}
X^{0}_{h,\Gamma}=\left\lbrace w_h\in C^0(\Gamma)\: : \: w_h\vert_{e_i}\in \mathcal{P}_s,\: e_i\in \mathcal{T}  \right\rbrace \subset H^{1/2}(\Gamma), \quad X^{0}_{h,\Gamma'} = X^{0}_{h,\Gamma} \cap \widetilde{H}^{1/2}(\Gamma'),
\end{equation}
where $\Gamma' \subset \Gamma$. \\
The full discretization of \eqref{eq:MixedProblem} involves the following subspace of $M^+(\mathcal{F})$ {on a second space-time mesh with mesh parameters $H$, $\Delta T$}:
\begin{align}
M^+_{H,\Delta T}(\mathcal{F}):=\left\lbrace \pmb{\mu}_{H,\Delta T} \in (X^{-1}_{H,\Gamma_C}\otimes V^{-1}_{\Delta T})^d: \mu_{\perp,H,\Delta T} \geq 0 {\text{ and } \|\pmb{\mu}_{\parallel,H,\Delta T}\|\leq \mathcal{F}} \text{ on }  (0,T] \times\Gamma_C  \right\rbrace .
\end{align}
Denoting a discretized version of the Poincar\'e-Steklov operator by ${\cal S}_{h,\Delta t}$, it reads:\\

\textit{find} $(\textbf{u}_{h,\Delta t},\pmb{\lambda}_{H,\Delta T}) \in (X^0_{h,\Gamma_\Sigma}\otimes V^0_{\Delta t})^d \times M^+_{H,\Delta T}(\mathcal{F})$ \textit{such that} 
\begin{subequations} \label{eq:MixedProblemh}
\begin{alignat}{2}
 &\left\langle \mathcal{S}_{h,\Delta t}\textbf{u}_{h,\Delta t},\textbf{v}_{h,\Delta t} \right\rangle_{0,\Gamma_\Sigma,(0,T]} -\left\langle \pmb{\lambda}_{H,\Delta T},{\bf{v}}_{h,\Delta t} \right\rangle_{0,\Gamma_C,(0,T]} = \left\langle \textbf{f},{\bf{v}}_{h,\Delta t} \right\rangle_{0,\Gamma_\Sigma,(0,T]} \quad\forall \textbf{v}_{h,\Delta t} \in (X^0_{h,\Gamma_\Sigma}\otimes V^0_{\Delta t})^d \label{eq:WeakMixedVarEqh} \\
&\left\langle \pmb{\mu}_{H,\Delta T} -\pmb{\lambda}_{H,\Delta T},\partial_{t,\parallel}{\bf{u}}_{h,\Delta t} \right\rangle_{0,\Gamma_C,(0,T]}  \geq \left\langle g,\mu_{\perp,H,\Delta T}-\lambda_{\perp, H,\Delta T} \right\rangle_{0,\Gamma_C,(0,T]}  \quad\quad\quad\;\:\forall \pmb{\mu}_{H,\Delta T} \in M^+_{H,\Delta T}(\mathcal{F}). \label{eq:ContContactConstraintsh}
\end{alignat}
\end{subequations}

A standard solver for the discrete formulation \eqref{eq:MixedProblemh} is given by the Uzawa algorithm, which involves the $L^2$-projection {$\text{Pr}_C : L^2((0,T]; L^2(\Gamma ) )^d \to M^+_{H,\Delta T}(\mathcal{F})$, defined by 
$$\text{Pr}_C({\bf{w}})_\perp = \max\{{w}_\perp,0\},\qquad \text{Pr}_C(\bf{w})_\parallel = \begin{cases}{\bf{w}}_\parallel,\quad  & \text{ if } \|\bf{w}_\parallel\| \leq\mathcal{F},\\ \mathcal{F}\frac{{\bf{w}}_\parallel}{\|{\bf{w}}_\parallel\|}, & \text{ if } \|\bf{w}_\parallel\|  > \mathcal{F},\end{cases} $$
as well as the function $\bf{g}$ such that $g_\perp = g$,  $\mathbf{g}_\parallel = \mathbf{0}$ (i.e.~${\bf{g}}=-g \bf{n}$). This algorithm is given explicitly by: 
\begin{algorithm}[H]
\caption{(Uzawa algorithm for unilateral frictional contact)}
\label{alg1}
\begin{algorithmic}
\STATE Fix $\rho>0$.
\STATE $k=0$,  $\pmb{\lambda}_{H,\Delta T}^{(0)}= 0$
\WHILE{stopping criterion not satisfied}
\STATE \textbf{solve}$\quad$ equation \eqref{eq:WeakMixedVarEqh} for ${\bf u}_{h,\Delta t}^{(k)}$
\STATE \textbf{compute}$\quad$ $\pmb{\lambda}^{(k+1)}_{H,\Delta T}= \text{Pr}_C (\pmb{\lambda}_{H,\Delta T}^{(k)} -\rho (\partial_{t,\parallel}{\bf{u}}_{h,\Delta t}^{(k)}-\bf{g})) $ 
\STATE $k \leftarrow k+1$
\ENDWHILE
\end{algorithmic}
\end{algorithm}
The implementation of Algorithm \ref{alg1} will be discussed in Section  \ref{sec:psh}.
} For the contact problem without friction the convergence of the Uzawa method has been proved in \cite{ourpaper2}.
\subsection{Analysis}
In this section we state an a priori error estimate for the mixed formulation of the unilateral frictional contact problem \eqref{eq:MixedProblem} and its discretization \eqref{eq:MixedProblemh}. It builds on the corresponding analysis for the Signorini problem in \cite{ourpaper2} and requires the  formulation of \eqref{eq:MixedProblem} presented for times $t \in \mathbb{R}^+$, using the generalized inner product \eqref{sigma_product} for $\sigma>0$ and considering
\begin{align}
{\widetilde{M}}^+(\mathcal{F})&:=\left\lbrace \pmb{\mu} \in H^{1/2}_\sigma(\mathbb{R}^+,\tilde{H}^{-1/2}(\Gamma_C))^d: \right.\nonumber \\&\qquad {\left. \left\langle \pmb{\mu},\mathbf{v}\right\rangle_{\sigma,\Gamma_C,\mathbb{R}^+} \leq \left\langle\mathcal{F}, \|\textbf{v}_\parallel\|\right\rangle_{\sigma,\Gamma_C,\mathbb{R}^+},\, \forall \ \mathbf{v} \in H^{-1/2}_\sigma(\mathbb{R}^+,H^{1/2}(\Gamma_\Sigma))^d, v_\perp\leq 0 
\right\rbrace\ .}
\end{align}
For $\textbf{f} \in H^{1/2}_\sigma\left(\mathbb{R}^+,H^{-1/2}(\Gamma_\Sigma)\right)^d$, {$\mathcal{F} \in L^\infty(\Gamma_C)$} and $g \in H^{1/2}_\sigma(\mathbb{R}^+,H^{1/2}(\Gamma_C))$ the formulation is given by:\\

\noindent \textit{find} $(\textbf{u},\pmb{\lambda}) \in H^{1/2}_\sigma(\mathbb{R}^+,\tilde{H}^{1/2}(\Gamma_\Sigma))^d \times {\widetilde{M}}^+({\cal F})$ \textit{such that} 
\begin{subequations} \label{eq:MixedProblemsigma}
 \begin{alignat}{2}
\left\langle \mathcal{S}\textbf{u},\textbf{v} \right\rangle_{\sigma,\Gamma_\Sigma,\mathbb{R}^+} - {\left\langle \pmb{\lambda},\mathbf{v} \right\rangle_{\sigma,\Gamma_C,\mathbb{R}^+}} &= \left\langle \textbf{f},\textbf{v} \right\rangle_{\sigma,\Gamma_\Sigma,\mathbb{R}^+} &\quad &\forall \textbf{v}\in H^{1/2}_\sigma(\mathbb{R}^+,\tilde{H}^{1/2}(\Gamma_\Sigma))^d \label{eq:WeakMixedVarEqsigma}\\
{\left\langle \pmb{\mu} -\pmb{\lambda},\partial_{t,\parallel}\bf{u} \right\rangle_{\sigma,\Gamma_C,\mathbb{R}^+}} & \geq \left\langle g,\mu_\perp-\lambda_\perp \right\rangle_{\sigma,\Gamma_C,\mathbb{R}^+} &\quad &\forall \pmb{\mu} \in {\widetilde{M}}^+(\mathcal{F}) .\label{eq:ContContactConstraintssigma}
\end{alignat}
\end{subequations}

The corresponding discretization, generalizing \eqref{eq:MixedProblemh}, is given by:\\

\noindent \textit{find} $(\textbf{u}_{h,\Delta t},\pmb{\lambda}_{H,\Delta T}) \in (X^0_{h,\Gamma_\Sigma}\otimes V^0_{\Delta t})^d \times M^+_{H,\Delta T}$ \textit{such that}
\begin{subequations} \label{eq:MixedProblemhsigma}
\begin{alignat}{2}
 &\left\langle \mathcal{S}_{h,\Delta t}\textbf{u}_{h,\Delta t},\textbf{v}_{h,\Delta t} \right\rangle_{\sigma,\Gamma_\Sigma,\mathbb{R}^+} -\left\langle \pmb{\lambda}_{H,\Delta T},{\bf{v}}_{h,\Delta t} \right\rangle_{\sigma,\Gamma_C,\mathbb{R}^+} = \left\langle \textbf{f},{\bf{v}}_{h,\Delta t} \right\rangle_{\sigma,\Gamma_\Sigma,\mathbb{R}^+} \ \forall \textbf{v}_{h,\Delta t} \in (X^0_{h,\Gamma_\Sigma}\otimes V^0_{\Delta t})^d \label{eq:WeakMixedVarEqhsigma} \\
&\left\langle \pmb{\mu}_{H,\Delta T} -\pmb{\lambda}_{H,\Delta T},\partial_{t,\parallel}{\bf{u}}_{h,\Delta t} \right\rangle_{\sigma,\Gamma_C,\mathbb{R}^+}  \geq \left\langle g,\mu_{\perp,H,\Delta T}-\lambda_{\perp, H,\Delta T} \right\rangle_{\sigma,\Gamma_C,\mathbb{R}^+}  \quad\quad\:\forall \pmb{\mu}_{H,\Delta T} \in M^+_{H,\Delta T}(\mathcal{F})\ , \label{eq:ContContactConstraintshsigma}
\end{alignat}
\end{subequations}
where the involved discrete functional spaces contain functions supported in a finite number of time steps.\\
A key ingredient to obtain the a priori error estimate is the following inf-sup estimate. For scalar ${\lambda}_{ H, \Delta T}$ the proof of the following Theorem \ref{discInfSup} is given in \cite[Theorem 15]{contact}. The estimate extends verbatim to vector-valued $\pmb{\lambda}_{ H, \Delta T}$, by applying it to the individual components: 

\begin{theorem}\label{discInfSup}
Let $C>0$ sufficiently small and $\frac{\max\{h, \Delta t\}}{\min\{H, \Delta T\}}<C$. Then there exists $\alpha>0$ such that 
$\forall\,\pmb{\lambda}_{H, \Delta T} \in  (X^{-1}_{H,\Gamma_C}\otimes V^{-1}_{\Delta T})^d$: 
$$\sup_{\mathbf{v}_{h,\Delta t} \in (X^0_{h,\Gamma}\otimes V^0_{\Delta t})^d} \frac{\langle \mathbf{v}_{h,\Delta t}, \pmb{\lambda}_{ H, \Delta T}\rangle_{\sigma, \Gamma_C, \mathbb{R}^+}}{\|\mathbf{v}_{h,\Delta t}\|_{ 0,\frac{1}{2}, \sigma, \ast}} \geq \alpha\, \|\pmb{\lambda}_{H,\Delta T}\|_{0, -\frac{1}{2}, \sigma }\ .$$
\end{theorem}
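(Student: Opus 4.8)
\noindent\emph{Proof strategy.} The plan is to reduce the vector-valued estimate to the scalar case of \cite[Theorem 15]{contact} and then to recall how that scalar estimate is proven by a Fortin-operator argument. For the reduction, I would use that all the discrete spaces are $d$-fold Cartesian products, that the pairing $\langle\cdot,\cdot\rangle_{\sigma,\Gamma_C,\mathbb{R}^+}$ is diagonal in the components, and that both norms in the statement are the Euclidean combinations of the corresponding scalar component norms. Writing $\pmb{\lambda}_{H,\Delta T}=(\lambda^{(1)}_{H,\Delta T},\dots,\lambda^{(d)}_{H,\Delta T})$, I would apply the scalar inf-sup estimate componentwise to produce near-optimal test functions $v^{(i)}_{h,\Delta t}\in X^0_{h,\Gamma}\otimes V^0_{\Delta t}$, assemble $\mathbf{v}_{h,\Delta t}=(v^{(1)}_{h,\Delta t},\dots,v^{(d)}_{h,\Delta t})$, and sum, obtaining the same constant $\alpha$. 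So the real content is the scalar estimate.

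For the scalar estimate I would invoke the standard Fortin criterion for discrete inf-sup conditions in mixed methods. First I would construct a linear quasi-interpolation operator $\Pi_{h,\Delta t}$ from the continuous space-time energy space on $\Gamma$ into $X^0_{h,\Gamma}\otimes V^0_{\Delta t}$ with two properties: (i) uniform boundedness, $\|\Pi_{h,\Delta t}w\|_{0,\frac{1}{2},\sigma,\ast}\lesssim_\sigma\|w\|_{0,\frac{1}{2},\sigma,\ast}$ with a constant independent of the meshes; and (ii) moment preservation against the coarse multiplier space, $\langle\Pi_{h,\Delta t}w-w,\mu_{H,\Delta T}\rangle_{\sigma,\Gamma_C,\mathbb{R}^+}=0$ for all $\mu_{H,\Delta T}\in X^{-1}_{H,\Gamma_C}\otimes V^{-1}_{\Delta T}$. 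Property (ii) is exactly where the hypothesis $\max\{h,\Delta t\}/\min\{H,\Delta T\}<C$ is used: the coarse multipliers are piecewise constant in space and time on cells of size comparable to $\min\{H,\Delta T\}$, while the fine space-time mesh has cells of size at most $\max\{h,\Delta t\}$, so a sufficiently small $C$ guarantees that each coarse cell contains enough fine degrees of freedom to match its finitely many local mean values. Concretely I would take $\Pi_{h,\Delta t}$ to be a spatial Cl\'ement/Scott--Zhang-type operator composed with a temporal interpolation, followed by local mean-value corrections on each coarse cell, and I would prove the stability of the corrections by a scaling argument combined with an inverse inequality in the discrete spaces.

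With $\Pi_{h,\Delta t}$ available, the estimate follows from continuous duality: given a scalar $\lambda_{H,\Delta T}$, by the definition of the dual norm there is $w$ in the continuous energy space with $\langle w,\lambda_{H,\Delta T}\rangle_{\sigma,\Gamma_C,\mathbb{R}^+}\gtrsim\|\lambda_{H,\Delta T}\|_{0,-\frac{1}{2},\sigma}\,\|w\|_{0,\frac{1}{2},\sigma,\ast}$, up to an arbitrarily small loss. Taking $v_{h,\Delta t}=\Pi_{h,\Delta t}w$ and using (ii) gives $\langle v_{h,\Delta t},\lambda_{H,\Delta T}\rangle_{\sigma,\Gamma_C,\mathbb{R}^+}=\langle w,\lambda_{H,\Delta T}\rangle_{\sigma,\Gamma_C,\mathbb{R}^+}$, while (i) gives $\|v_{h,\Delta t}\|_{0,\frac{1}{2},\sigma,\ast}\lesssim_\sigma\|w\|_{0,\frac{1}{2},\sigma,\ast}$; dividing yields the claimed supremum bound, with $\alpha$ depending only on $C$, $\sigma$ and the shape-regularity constants of the two meshes, but not on the mesh sizes themselves.

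The hard part is the construction and the uniform bound of $\Pi_{h,\Delta t}$ in the weighted, anisotropic space-time norm $\|\cdot\|_{0,\frac{1}{2},\sigma,\ast}$: because of the half-order spatial smoothness and the non-local, $\sigma$-weighted temporal norm, the spatial and temporal factors of the operator cannot be estimated independently, and controlling the local mean-value corrections forces one to balance an inverse estimate against the mesh-ratio hypothesis. This is precisely the technical content of \cite[Theorem 15]{contact}, which I would invoke for the scalar case and then combine componentwise as above to obtain the vector-valued statement.
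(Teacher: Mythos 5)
Your proposal takes essentially the same route as the paper: the paper likewise proves the theorem by reducing the vector-valued estimate to its scalar components and invoking \cite[Theorem 15]{contact} for the scalar inf-sup condition, which is exactly your componentwise assembly argument (with the usual normalization of each scalar supremizer). Your additional Fortin-operator sketch of the scalar case goes beyond what the paper provides --- the paper leaves that entirely to the cited reference --- but since you too ultimately invoke \cite[Theorem 15]{contact} for the scalar estimate, the argument is correct and coincides with the paper's.
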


\begin{theorem}\label{apriori}
Let $(\textbf{u},\pmb{\lambda}) \in H^{1/2}_\sigma(\mathbb{R}^+,\tilde{H}^{1/2}(\Gamma_\Sigma))^d \times {\widetilde{M}}^+({\cal F})$ be a solution to the mixed problem \eqref{eq:MixedProblemsigma} and $(\textbf{u}_{h,\Delta t},\pmb{\lambda}_{H,\Delta T}) \in (X^0_{h,\Gamma_\Sigma}\otimes V^0_{\Delta t})^d \times M^+_{H,\Delta T}$ a solution of the discretized mixed problem \eqref{eq:MixedProblemhsigma}. Assume that $\mathcal{S}$ is coercive. Then for a sufficiently small constant $C>0$ and $\frac{\max\{h, \Delta t\}}{\min\{H, \Delta T\}}<C$, the following a priori estimates hold: \\
\begin{align}
\| \pmb{\lambda} -\pmb{\lambda}_{H,\Delta T} \|_{0,-\frac{1}{2}, \sigma}  &\lesssim_\sigma \inf \limits_{\tilde{\pmb{\lambda}}_{H,\Delta T} \in M^+_{H,\Delta T}(\mathcal{F})} \| \pmb{\lambda} - \tilde{\pmb{\lambda}}_{H,\Delta T} \|_{0,-\frac{1}{2},\sigma} +(\Delta t)^{-\frac{1}{2}}  \|{\bf u}- {\bf u}_{h,\Delta t}\|_{-\frac{1}{2},\frac{1}{2},\sigma, \ast} \ ,\label{est_1}\\
\|{\bf u}-{\bf u}_{h,\Delta t}\|_{-\frac{1}{2},\frac{1}{2},\sigma, \ast} &\lesssim_\sigma \inf \limits_{{\bf v}_{h,\Delta t} \in (X^0_{h,\Gamma_\Sigma}\otimes V^0_{\Delta t})^d}
\|{\bf u}-{\bf v}_{h,\Delta t}\|_{\frac{1}{2},\frac{1}{2},\sigma, \ast}\nonumber \\& \qquad
+ \inf \limits_{\tilde{\pmb{\lambda}}_{H,\Delta T} \in M^+_{H,\Delta T}(\mathcal{F})}\left\{\|\tilde{\pmb{\lambda}}_{H,\Delta T} - \pmb{\lambda}\|_{\frac{1}{2},-\frac{1}{2},\sigma} +\|\tilde{\pmb{\lambda}}_{H,\Delta T} - \pmb{\lambda}_{H,\Delta T}\|_{\frac{1}{2},-\frac{1}{2},\sigma}\right\} \ \label{est_2}. 
\end{align}
\end{theorem}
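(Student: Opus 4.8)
The plan is to adapt the a priori analysis of the dynamic Signorini problem, \cite{ourpaper2} and \cite{contact}, to the present frictional setting; the only genuinely new ingredient is the friction functional, which is handled exactly by the manipulations already carried out in the proof of Proposition~\ref{prop:varineq}. I would begin with two preliminary remarks used throughout. First, since the discrete trial/test space $(X^0_{h,\Gamma_\Sigma}\otimes V^0_{\Delta t})^d$ is conforming and $\mathcal{S}_{h,\Delta t}$ is the Galerkin approximation of $\mathcal{S}$, subtracting \eqref{eq:WeakMixedVarEqhsigma} from \eqref{eq:WeakMixedVarEqsigma} yields the orthogonality relation
\begin{equation}\label{eq:galortho}
\langle \mathcal{S}(\mathbf{u}-\mathbf{u}_{h,\Delta t}),\mathbf{v}_{h,\Delta t}\rangle_{\sigma,\Gamma_\Sigma,\mathbb{R}^+}=\langle \pmb{\lambda}-\pmb{\lambda}_{H,\Delta T},\mathbf{v}_{h,\Delta t}\rangle_{\sigma,\Gamma_C,\mathbb{R}^+}\qquad\forall\,\mathbf{v}_{h,\Delta t}\in(X^0_{h,\Gamma_\Sigma}\otimes V^0_{\Delta t})^d
\end{equation}
(up to the consistency error $\langle(\mathcal{S}-\mathcal{S}_{h,\Delta t})\mathbf{u}_{h,\Delta t},\cdot\rangle$, which is carried along as in \cite{ourpaper2}). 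Second, the pointwise constraints $\mu_\perp\ge 0$, $\|\pmb{\mu}_\parallel\|\le\mathcal{F}$ defining $M^+_{H,\Delta T}(\mathcal{F})$ imply, by a pointwise Cauchy--Schwarz estimate in the tangential directions, that $M^+_{H,\Delta T}(\mathcal{F})\subset\widetilde{M}^+(\mathcal{F})$, so every discrete multiplier is admissible as a test function in \eqref{eq:ContContactConstraintssigma}.

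To prove \eqref{est_2} I would fix arbitrary $\mathbf{v}_{h,\Delta t}\in(X^0_{h,\Gamma_\Sigma}\otimes V^0_{\Delta t})^d$ and $\tilde{\pmb{\lambda}}_{H,\Delta T}\in M^+_{H,\Delta T}(\mathcal{F})$, apply the coercivity of $\mathcal{S}$ to $\mathbf{u}_{h,\Delta t}-\mathbf{v}_{h,\Delta t}$, and use \eqref{eq:galortho} to rewrite the cross term as $-\langle\pmb{\lambda}-\pmb{\lambda}_{H,\Delta T},\mathbf{u}_{h,\Delta t}-\mathbf{v}_{h,\Delta t}\rangle_{\sigma,\Gamma_C,\mathbb{R}^+}$ plus an $\mathcal{S}$-continuity term $\|\mathbf{u}-\mathbf{v}_{h,\Delta t}\|_{\frac12,\frac12,\sigma,\ast}\|\mathbf{u}_{h,\Delta t}-\mathbf{v}_{h,\Delta t}\|_{-\frac12,\frac12,\sigma,\ast}$. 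Splitting $\pmb{\lambda}-\pmb{\lambda}_{H,\Delta T}=(\pmb{\lambda}-\tilde{\pmb{\lambda}}_{H,\Delta T})+(\tilde{\pmb{\lambda}}_{H,\Delta T}-\pmb{\lambda}_{H,\Delta T})$ isolates, besides a best-approximation pairing for $\pmb{\lambda}$, the term involving $\tilde{\pmb{\lambda}}_{H,\Delta T}-\pmb{\lambda}_{H,\Delta T}$; the latter I would control by testing \eqref{eq:ContContactConstraintssigma} with $\pmb{\mu}=\pmb{\lambda}_{H,\Delta T}$ and \eqref{eq:ContContactConstraintshsigma} with $\pmb{\mu}_{H,\Delta T}=\tilde{\pmb{\lambda}}_{H,\Delta T}$, adding the two, and using the exact-solution relations $\lambda_\perp(u_\perp-g)=0$ and $\mathbf{p}_\parallel\cdot\dot{\mathbf{u}}_\parallel+\mathcal{F}\|\dot{\mathbf{u}}_\parallel\|=0$ from Theorem~\ref{equivtheorem:unilateral}. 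The friction terms $\langle\mathcal{F},\|\dot{\mathbf{v}}_\parallel\|\rangle$ appearing in this combination are absorbed by the triangle inequality for $\|\cdot\|$, exactly as in Proposition~\ref{prop:varineq}, so that only sign-definite terms and best-approximation terms for $\pmb{\lambda}$ and $g$ survive. A Young-type absorption, the triangle inequality $\|\mathbf{u}-\mathbf{u}_{h,\Delta t}\|\le\|\mathbf{u}-\mathbf{v}_{h,\Delta t}\|+\|\mathbf{v}_{h,\Delta t}-\mathbf{u}_{h,\Delta t}\|$, and the infima over $\mathbf{v}_{h,\Delta t}$ and $\tilde{\pmb{\lambda}}_{H,\Delta T}$ then give \eqref{est_2}.

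For \eqref{est_1} I would invoke the discrete inf-sup estimate of Theorem~\ref{discInfSup}. After the same splitting $\pmb{\lambda}-\pmb{\lambda}_{H,\Delta T}=(\pmb{\lambda}-\tilde{\pmb{\lambda}}_{H,\Delta T})+(\tilde{\pmb{\lambda}}_{H,\Delta T}-\pmb{\lambda}_{H,\Delta T})$ and the triangle inequality, the discrete summand lies in $(X^{-1}_{H,\Gamma_C}\otimes V^{-1}_{\Delta T})^d$, so Theorem~\ref{discInfSup} bounds its $\|\cdot\|_{0,-\frac12,\sigma}$-norm by $\alpha^{-1}$ times the supremum over $\mathbf{v}_{h,\Delta t}$ of $\langle\mathbf{v}_{h,\Delta t},\tilde{\pmb{\lambda}}_{H,\Delta T}-\pmb{\lambda}_{H,\Delta T}\rangle_{\sigma,\Gamma_C,\mathbb{R}^+}/\|\mathbf{v}_{h,\Delta t}\|_{0,\frac12,\sigma,\ast}$. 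In the numerator I would split once more: the part with $\tilde{\pmb{\lambda}}_{H,\Delta T}-\pmb{\lambda}$ is estimated directly against $\|\mathbf{v}_{h,\Delta t}\|_{0,\frac12,\sigma,\ast}$, and the part with $\pmb{\lambda}-\pmb{\lambda}_{H,\Delta T}$ is rewritten via \eqref{eq:galortho} as $\langle\mathcal{S}(\mathbf{u}-\mathbf{u}_{h,\Delta t}),\mathbf{v}_{h,\Delta t}\rangle_{\sigma,\Gamma_\Sigma,\mathbb{R}^+}$, which by continuity of $\mathcal{S}$ is $\lesssim_\sigma\|\mathbf{u}-\mathbf{u}_{h,\Delta t}\|_{-\frac12,\frac12,\sigma,\ast}\|\mathbf{v}_{h,\Delta t}\|_{\frac12,\frac12,\sigma,\ast}$; an inverse estimate in time on $V^0_{\Delta t}$ replaces $\|\mathbf{v}_{h,\Delta t}\|_{\frac12,\frac12,\sigma,\ast}$ by $(\Delta t)^{-\frac12}\|\mathbf{v}_{h,\Delta t}\|_{0,\frac12,\sigma,\ast}$, which after dividing, taking the supremum and then the infimum over $\tilde{\pmb{\lambda}}_{H,\Delta T}$ produces \eqref{est_1}.

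I expect the main difficulty to be the mismatch in time regularity between the displacement equation \eqref{eq:WeakMixedVarEqsigma} (which pairs $\mathcal{S}\mathbf{u}$ with $\mathbf{v}$) and the contact inequality \eqref{eq:ContContactConstraintssigma} (which pairs the multiplier with $\partial_{t,\parallel}\mathbf{u}$): reconciling these forces the $\mathbf{u}$-error to be measured with half a time-derivative less than the approximation norm and requires working with time anti-derivatives together with inverse estimates on $V^0_{\Delta t}$, which is precisely the origin of the $(\Delta t)^{-1/2}$ factor in \eqref{est_1} and the technical core of \cite{contact} that I would import essentially verbatim. By contrast the friction contributions are benign, collapsing to sign-definite or best-approximation quantities exactly as in Proposition~\ref{prop:varineq}. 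Finally, the hypothesis that $\mathcal{S}$ be coercive is used in the energy step, while $\max\{h,\Delta t\}/\min\{H,\Delta T\}<C$ is needed for Theorem~\ref{discInfSup} and, together with the exponential weight $e^{-2\sigma t}$, for keeping all constants $\sigma$-explicit.
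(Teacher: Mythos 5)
Your treatment of \eqref{est_1} is essentially the paper's argument: the identity obtained by combining \eqref{eq:WeakMixedVarEqsigma} and \eqref{eq:WeakMixedVarEqhsigma} (equation \eqref{lambdaEq} in the paper), the inf-sup bound of Theorem \ref{discInfSup} applied to $\pmb{\lambda}_{H,\Delta T}-\tilde{\pmb{\lambda}}_{H,\Delta T}$, continuity of the duality pairing, and the inverse inequality in time on $\mathbf{v}_{h,\Delta t}$ producing the factor $(\Delta t)^{-1/2}$, followed by the triangle inequality. That part is fine.

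For \eqref{est_2}, however, your plan contains a genuine gap. After coercivity and the Galerkin relation you are left with the term $\langle \tilde{\pmb{\lambda}}_{H,\Delta T}-\pmb{\lambda}_{H,\Delta T},\, \mathbf{u}_{h,\Delta t}-\mathbf{v}_{h,\Delta t}\rangle_{\sigma,\Gamma_C,\mathbb{R}^+}$, and you propose to control it by testing \eqref{eq:ContContactConstraintssigma} with $\pmb{\mu}=\pmb{\lambda}_{H,\Delta T}$ and \eqref{eq:ContContactConstraintshsigma} with $\pmb{\mu}_{H,\Delta T}=\tilde{\pmb{\lambda}}_{H,\Delta T}$. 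But those inequalities pair multiplier differences with $\partial_{t,\parallel}\mathbf{u}$, respectively $\partial_{t,\parallel}\mathbf{u}_{h,\Delta t}$, together with the gap $g$; they say nothing about the pairing with the arbitrary difference $\mathbf{u}_{h,\Delta t}-\mathbf{v}_{h,\Delta t}$ that actually appears in the coercivity step, and converting one into the other would require integration by parts in time and would generate extra gap and consistency terms. Indeed your claimed outcome (``only sign-definite terms and best-approximation terms for $\pmb{\lambda}$ and $g$ survive'') is inconsistent with the statement: \eqref{est_2} contains no $g$-term and it explicitly \emph{retains} $\|\tilde{\pmb{\lambda}}_{H,\Delta T}-\pmb{\lambda}_{H,\Delta T}\|_{\frac{1}{2},-\frac{1}{2},\sigma}$ on the right-hand side. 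The paper's proof never uses the contact inequalities at all in this theorem: it simply bounds $\langle \tilde{\pmb{\lambda}}_{H,\Delta T}-\pmb{\lambda}_{H,\Delta T},\, \mathbf{u}_{h,\Delta t}-\mathbf{v}_{h,\Delta t}\rangle_{\sigma,\Gamma_C,\mathbb{R}^+}$ (and the analogous term with $\pmb{\lambda}-\tilde{\pmb{\lambda}}_{H,\Delta T}$) by continuity of the duality pairing, divides by $\|\mathbf{u}_{h,\Delta t}-\mathbf{v}_{h,\Delta t}\|_{-\frac{1}{2},\frac{1}{2},\sigma,\ast}$, and concludes with the triangle inequality, keeping both multiplier terms in the final estimate. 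Replacing your detour through \eqref{eq:ContContactConstraintssigma}--\eqref{eq:ContContactConstraintshsigma} by this direct duality bound repairs the argument; the conformity observation $M^+_{H,\Delta T}(\mathcal{F})\subset\widetilde{M}^+(\mathcal{F})$, while true for Tresca friction, is then not needed here.
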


\begin{proof}[Proof.]
The weak formulation \eqref{eq:MixedProblemsigma} and its discretization \eqref{eq:MixedProblemhsigma} imply that for arbitrary $\tilde{\pmb{\lambda}}_{H,\Delta T} \in M^+_{H,\Delta T}(\mathcal{F})$:
\begin{align} \label{lambdaEq}
\langle \pmb{\lambda}_{H,\Delta T}-\pmb{\tilde{\lambda}}_{H,\Delta T}, \mathbf{v}_{h,\Delta t} \rangle_{\sigma,\Gamma_C,\mathbb{R}^+} & = \langle \mathcal{S} \mathbf{u}_{h,\Delta t}, \mathbf{v}_{h,\Delta t} \rangle_{\sigma,\Gamma_\Sigma,\mathbb{R}^+} - \langle \mathbf{f}, \mathbf{v}_{h,\Delta t}\rangle_{\sigma,\Gamma_\Sigma,\mathbb{R}^+} - \langle \pmb{\tilde{\lambda}}_{H,\Delta T}, \mathbf{v}_{h,\Delta t} \rangle_{\sigma,\Gamma_C,\mathbb{R}^+} \nonumber \\  &= \langle \mathcal{S} \mathbf{u}_{h,\Delta t}, \mathbf{v}_{h,\Delta t} \rangle_{\sigma,\Gamma_\Sigma,\mathbb{R}^+} - \langle \mathcal{S} \mathbf{u}, \mathbf{v}_{h,\Delta t} \rangle_{\sigma,\Gamma_\Sigma,\mathbb{R}^+} +\langle  \pmb{\lambda} , \mathbf{v}_{h,\Delta t} \rangle_{\sigma,\Gamma_C,\mathbb{R}^+}\nonumber \\ &\qquad -\langle \pmb{\tilde{\lambda}}_{H,\Delta T}, \mathbf{v}_{h,\Delta t} \rangle_{\sigma,\Gamma_C,\mathbb{R}^+} \nonumber 
\\ &= \langle \mathcal{S} (\mathbf{u}_{h,\Delta t} -\mathbf{u}),\mathbf{v}_{h,\Delta t} \rangle_{\sigma,\Gamma_\Sigma,\mathbb{R}^+} + \langle \pmb{\lambda} - \pmb{\tilde{\lambda}}_{H,\Delta T}, \mathbf{v}_{h,\Delta t} \rangle_{\sigma,\Gamma_C,\mathbb{R}^+}.
\end{align} 
The inf-sup condition in Theorem \ref{discInfSup} and equation \eqref{lambdaEq} lead to:
\begin{align*}
\alpha\, \| \pmb{\lambda}_{H,\Delta T}-\pmb{\tilde{\lambda}}_{H,\Delta T}\|_{0,-\frac{1}{2},\sigma} &\leq \sup_{ \mathbf{v}_{h,\Delta t}\in (X^0_{h,\Gamma_\Sigma}\otimes V^0_{\Delta t})^d} \frac{ \langle  \pmb{\lambda}_{H,\Delta T}-\pmb{\tilde{\lambda}}_{H,\Delta T}, \mathbf{v}_{h,\Delta t} \rangle_{\sigma,\Gamma_C,\mathbb{R}^+}}{\|\mathbf{v}_{h,\Delta t}\|_{0,\frac{1}{2}, \sigma, \ast} }
\\ & =
\sup_{ \mathbf{v}_{h,\Delta t}\in (X^0_{h,\Gamma_\Sigma}\otimes V^0_{\Delta t})^d} \frac{\langle \mathcal{S} (\mathbf{u}_{h,\Delta t} -\mathbf{u}),\mathbf{v}_{h,\Delta t} \rangle_{\sigma,\Gamma_\Sigma,\mathbb{R}^+} + \langle \pmb{\lambda}-\pmb{\tilde{\lambda}}_{H,\Delta T}, \mathbf{v}_{h,\Delta t} \rangle_{\sigma,\Gamma_C,\mathbb{R}^+}}{\|\mathbf{v}_{h,\Delta t}\|_{0,\frac{1}{2}, \sigma, \ast}}\ .
\end{align*}
For the first term, the continuity of the  duality pairing and an inverse inequality in time \cite[p.~451]{setup} lead to\\
\begin{align*}
|\langle \mathcal{S} (\mathbf{u}_{h,\Delta t} -\mathbf{u}),\mathbf{v}_{h,\Delta t} \rangle_{\sigma,\Gamma_\Sigma,\mathbb{R}^+}| & \leq \| \mathcal{S}(\mathbf{u}_{h,\Delta t}-\mathbf{u})\|_{-\frac{1}{2},-\frac{1}{2},\sigma} \|{\mathbf{v}_{h,\Delta t}}\|_{\frac{1}{2},\frac{1}{2},\sigma, \ast} 
\\ &\lesssim\,
\| \mathbf{u}_{h,\Delta t} -\mathbf{u}\|_{-\frac{1}{2},\frac{1}{2},\sigma,\ast}(\Delta t)^{-\frac{1}{2}} \|\mathbf{v}_{h,\Delta t}\|_{0,\frac{1}{2},\sigma, \ast}\ .\end{align*}
For the second term, similarly we obtain:
\begin{align*}
|\langle \pmb{\lambda}-\pmb{\tilde{\lambda}}_{H,\Delta T}, \mathbf{v}_{h,\Delta t} \rangle_{\sigma,\Gamma_C,\mathbb{R}^+}|& \leq \| \pmb{\lambda}-\pmb{\tilde{\lambda}}_{H,\Delta T} \|_{0,-\frac{1}{2},\sigma} \|\mathbf{v}_{h,\Delta t}\|_{0,\frac{1}{2},\sigma, \ast}
\ .
\end{align*} 
It follows
\begin{align}\label{lambdaapriori}
 \|\pmb{\lambda} -\pmb{\lambda}_{H,\Delta T}\|_{0,-\frac{1}{2}, \sigma} &\leq \inf \limits_{\pmb{\tilde{\lambda}}_{H,\Delta T}}  \left( \|\pmb{\lambda} -\pmb{\tilde{\lambda}}_{H,\Delta T}\|_{0,-\frac{1}{2}, \sigma} + \|\pmb{\lambda}_{H,\Delta T} -\pmb{\tilde{\lambda}}_{H,\Delta T}\|_{0,-\frac{1}{2}, \sigma} \right) \nonumber \\  &\lesssim_\sigma  \inf \limits_{\pmb{\tilde{\lambda}}_{H,\Delta T}} \| \pmb{\lambda}- \pmb{\tilde{\lambda}}_{H,\Delta T} \|_{0,-\frac{1}{2},\sigma} +(\Delta t)^{-\frac{1}{2}}  \| \mathbf{u}_{h,\Delta t}-\mathbf{u}\|_{-\frac{1}{2},\frac{1}{2},\sigma, \ast} \ ,\nonumber
\end{align}
hence the a priori estimate \eqref{est_1} is proved.
To show the second estimate, \eqref{est_2}, first note that from  \eqref{lambdaEq}
\begin{equation*}
\langle\mathcal{S}(\mathbf{u}-\mathbf{u}_{h,\Delta t}),\mathbf{v}_{h,\Delta t}\rangle_{\sigma,\Gamma_\Sigma,\mathbb{R}^+} = \langle \pmb{\lambda} -\pmb{\lambda}_{H,\Delta T}, \mathbf{v}_{h,\Delta t}\rangle_{\sigma,\Gamma_C,\mathbb{R}^+}.
\end{equation*}
By assumption the Poincar\'{e}-Steklov operator is coercive, so that, using also the previous equality,
\begin{align*}
\|\mathbf{u}_{h,\Delta t} - \mathbf{v}_{h,\Delta t}\|_{-\frac{1}{2},\frac{1}{2},\sigma, \ast}^2 &\lesssim_\sigma\langle\mathcal{S}(\mathbf{u}_{h,\Delta t}-\mathbf{v}_{h,\Delta t}), \mathbf{u}_{h,\Delta t}- \mathbf{v}_{h,\Delta t}\rangle_{\sigma,\Gamma_\Sigma,\mathbb{R}^+} \\
& = \langle\mathcal{S}(\mathbf{u}-\mathbf{v}_{h,\Delta t}),\mathbf{u}_{h,\Delta t}- \mathbf{v}_{h,\Delta t}\rangle_{\sigma,\Gamma_\Sigma,\mathbb{R}^+} + \langle\mathcal{S}(\mathbf{u}_{h,\Delta t}-\mathbf{u}), \mathbf{u}_{h,\Delta t}- \mathbf{v}_{h,\Delta t}\rangle_{\sigma,\Gamma_\Sigma,\mathbb{R}^+}\\ 
& = \langle \mathcal{S}(\mathbf{u}-\mathbf{v}_{h,\Delta t}),\mathbf{u}_{h,\Delta t}- \mathbf{v}_{h,\Delta t}\rangle_{\sigma,\Gamma_\Sigma,\mathbb{R}^+}\\& \qquad+\langle \pmb{\tilde{\lambda}}_{H,\Delta T} - \pmb{\lambda}+\pmb{\lambda}_{H,\Delta T}-\pmb{\tilde{\lambda}}_{H,\Delta T}, \mathbf{u}_{h,\Delta t}-  \mathbf{v}_{h,\Delta t}\rangle_{\sigma,\Gamma_C,\mathbb{R}^+} 
\end{align*}
for all $\mathbf{v}_{h,\Delta t}$ and $\tilde{\pmb{\lambda}}_{H,\Delta T}$. Using the mapping properties of $\mathcal{S}$ and the continuity of the duality pairing, we obtain
\begin{align*}
\|\mathbf{u}_{h,\Delta t} - \mathbf{v}_{h,\Delta t}\|_{-\frac{1}{2},\frac{1}{2},\sigma, \ast}^2 &\lesssim \|\mathbf{u}-\mathbf{v}_{h,\Delta t}\|_{\frac{1}{2},\frac{1}{2},\sigma, \ast}\|\mathbf{u}_{h,\Delta t}- \mathbf{v}_{h,\Delta t}\|_{-\frac{1}{2},\frac{1}{2},\sigma, \ast}\\
& \quad +\|\pmb{\tilde{\lambda}}_{H,\Delta T} - \pmb{\lambda}\|_{\frac{1}{2},-\frac{1}{2},\sigma} \| \mathbf{u}_{h,\Delta t}-  \mathbf{v}_{h,\Delta t}\|_{-\frac{1}{2},\frac{1}{2},\sigma, \ast} \\ & \quad + \|\pmb{\tilde{\lambda}}_{H,\Delta T}-\pmb{\lambda}_{H,\Delta T}\|_{\frac{1}{2},-\frac{1}{2},\sigma} \| \mathbf{u}_{h,\Delta t}-  \mathbf{v}_{h,\Delta t}\|_{-\frac{1}{2},\frac{1}{2},\sigma, \ast}\ . 
\end{align*}
We conclude
\begin{align*}
\|\mathbf{u}_{h,\Delta t} - \mathbf{v}_{h,\Delta t}\|_{-\frac{1}{2},\frac{1}{2},\sigma, \ast} &\lesssim_\sigma \|\mathbf{u}-\mathbf{v}_{h,\Delta t}\|_{\frac{1}{2},\frac{1}{2},\sigma, \ast}
+\|\pmb{\tilde{\lambda}}_{H,\Delta T} - \pmb{\lambda}\|_{\frac{1}{2},-\frac{1}{2},\sigma}  + \|\pmb{\tilde{\lambda}}_{H,\Delta T} - \pmb{\lambda}_{H,\Delta T}\|_{\frac{1}{2},-\frac{1}{2},\sigma}\ . 
\end{align*}
Finally, we obtain with the triangle inequality 
\begin{align*}
\|\mathbf{u}-\mathbf{u}_{h,\Delta t}\|_{-\frac{1}{2},\frac{1}{2},\sigma, \ast} &\lesssim_\sigma \|\mathbf{u}-\mathbf{v}_{h,\Delta t}\|_{\frac{1}{2},\frac{1}{2},\sigma, \ast}
+\|\pmb{\tilde{\lambda}}_{H,\Delta T} - \pmb{\lambda}\|_{\frac{1}{2},-\frac{1}{2},\sigma} +\|\pmb{\tilde{\lambda}}_{H,\Delta T} - \pmb{\lambda}_{H,\Delta T}\|_{\frac{1}{2},-\frac{1}{2},\sigma} \,. 
\end{align*}
The a priori estimate \eqref{est_2} follows.
\end{proof}

\subsection{Coulomb friction} \label{sec:coulomb}

{For Coulomb friction, the friction coefficient $\mathcal{F}_c$ is a sufficiently regular function on $\Gamma_C$. In our formulation only the set of Lagrange multipliers needs to be adapted, as the friction threshold $\mathcal{F}_c\lambda_\perp$ now depends on the normal force $\lambda_\perp$: 
 \begin{align}
 M^+(\mathcal{F}_c\lambda_\perp)&:=\left\lbrace \pmb{\mu} \in H^{1/2}([0,T],\tilde{H}^{-1/2}(\Gamma_C))^d: \right.\nonumber \\& \left.\left\langle \pmb{\mu},\mathbf{v}\right\rangle_{0,\Gamma_C,(0,T]} \leq \left\langle\mathcal{F}_c \lambda_\perp, \|\textbf{v}_\parallel\|\right\rangle_{0,\Gamma_C,(0,T]},\, \forall \ \mathbf{v} \in H^{-1/2}([0,T],{H}^{1/2}(\Gamma_\Sigma))^d, v_\perp\leq 0 
\right\rbrace \label{eq:CoulombModLagrangeSet}.
 \end{align}
Its discretization is given by 
\begin{align}
M^+_{H,\Delta T}(\mathcal{F}_c\lambda_{\perp, H, \Delta T})&:=\Big\lbrace \pmb{\mu}_{H,\Delta T} \in (X^{-1}_{H,\Gamma_C}\otimes V^{-1}_{\Delta T})^d: \nonumber\\ &\qquad  \mu_{\perp,H,\Delta T} \geq 0 {\text{ and } \|\pmb{\mu}_{\parallel,H,\Delta T}\|\leq \mathcal{F}_c \lambda_{\perp, H, \Delta T}} \text{ on } (0,T] \times\Gamma_C \Big\rbrace.
\end{align}
The resulting continuous and discrete problems are, respectively, as in \eqref{eq:MixedProblem} with $M^+(\mathcal{F}_c\lambda_\perp)$ substituting $M^+(\mathcal{F})$ and as in \eqref{eq:MixedProblemh} with $M^+_{H,\Delta T}(\mathcal{F}_c\lambda_{\perp, H, \Delta T})$ substituting $M^+_{H,\Delta T}(\mathcal{F})$.\\

Note that generally $\mathcal{F}_c \lambda_\perp \not \in L^\infty(\Gamma_C)$ and that, unlike for Tresca friction, the discretization is no longer conforming when $M^+_{H,\Delta T}(\mathcal{F}_c\lambda_{\perp, H, \Delta T}) \not \subset M^+(\mathcal{F}_c\lambda_{\perp})$. Furthermore, because the sets $M^+(\mathcal{F}_c\lambda_{\perp})$ and $M^+_{H,\Delta T}(\mathcal{F}_c\lambda_{\perp, H, \Delta T})$ depend on the solution of the continuous problem, respectively its discretization, 
standard assumptions in the analysis of variational inequalities are no longer satisfied. Consequently, little is  rigorously known for the Coulomb friction law even for stationary contact problems.} 

\section{Two-body frictional contact}\label{Bilateral}

To study the bilateral frictional contact problem,
we need to introduce the Poincar\'{e}-Steklov operator 
\eqref{operator_S} for each of the two domains $\Omega_j,\,j=1,2$ involved. We will denote these operators by ${\cal S}_j,\,j=1,2$. 

Referring to the problem \eqref{strongformPDE2b} - \eqref{sfdn2},
the boundary $\Gamma$ is decomposed into parts where the displacement ($\Gamma_D$) or the traction ($\Gamma_N$) are prescribed, as well as the interface $\Gamma_\Sigma'=\Gamma_I\cup \Gamma_C$ of the two bodies. At the interface, transmission ($\Gamma_I$)  or frictional contact conditions ($\Gamma_C$) are imposed. For simplicity of the already involved notation, we fix $\mathbf{g} = \mathbf{0}$ on $(0,T]\times  \Gamma_\Sigma'$ and assume that $\Gamma_N = \emptyset$, so that $\Gamma=\Gamma_D \cup \Gamma_\Sigma$ and the boundary portion  $\Gamma_\Sigma=\Gamma_N\cup \Gamma_\Sigma'$ reduces to $\Gamma_\Sigma=\Gamma_\Sigma'$.\\
Now, we set $\mathbf{u} = \mathbf{u}_1|_{(0,T]\times \Gamma_\Sigma}$,
$\widetilde{\mathbf{u}} = (\mathbf{u}_1-\mathbf{u}_2)|_{\Gamma_\Sigma}$ and $\pmb{\lambda} = \mathbf{p}_{1}|_{ \Gamma_C}=(\mathbf{f}-\mathbf{p}_{2})|_{\Gamma_C}$, so that the contact conditions \eqref{fullcontactbc12b}, \eqref{fullcontactbc22b} on $(0,T]\times  \Gamma_C$ become
\begin{align}
 \widetilde{u}_\perp \geq 0
 \ ,\ \lambda_{\perp}\geq 0,\ \widetilde{u}_\perp \lambda_{\perp} =0,\label{fullcontactbc12blambda}\\
   \| \pmb{\lambda}_{\parallel} \|  \leq \mathcal{F},\ \pmb{\lambda}_{\parallel} \cdot\dot{\widetilde{\mathbf{u}}}+\mathcal{F}\| \dot{\widetilde{\mathbf{u}}}\| =0. \label{fullcontactbc22blambda}
\end{align}
The frictional contact problem \eqref{strongformPDE2b} - \eqref{sfdn2} can be formulated as a variational inequality for $(\mathbf{u},\widetilde{\mathbf{u}})$ in the half-space 
\begin{align*}\mathcal{C}&:=\left\lbrace (\textbf{v} : (0,T]\times  \Gamma \to \mathbb{R}^d,\widetilde{\mathbf{v}} : (0,T]\times  \Gamma_\Sigma \to \mathbb{R}^d):  \mathbf{v} = \mathbf{0} \; \text{~a.e.~on~} (0,T]\times  \Gamma_D,\  \right.\\
& \left. \qquad \qquad 
\widetilde{v}_{\perp} \geq 0\; \text{~a.e.~on~} (0,T]\times  \Gamma_C\ ,
\widetilde{\mathbf{v}} =  \mathbf{0} \; \text{~a.e.~on~} (0,T]\times  \Gamma_I \right\rbrace.\end{align*}
Using the friction functional given in \eqref{friction functional},
the variational inequality reads:\\

\noindent {\textit{find} $(\mathbf{u},\widetilde{\mathbf{u}}) \in \mathcal{C}$ \textit{such that for all}  $(\textbf{v},\widetilde{\mathbf{v}})  \in \mathcal{C}$
\begin{align} \label{eq:VarIneq2b}
&\langle {\mathcal{S}_1 {\bf u}}, \partial_{t,\parallel}({\bf v}-{\bf u})\rangle_{0,\Gamma_\Sigma,(0,T]} + \langle {\mathcal{S}_2 ({\bf u}-\widetilde{\bf u})}, \partial_{t,\parallel}({\bf v}-\widetilde{\bf v}-({\bf u}-\widetilde{\bf u}))\rangle_{0,\Gamma_\Sigma,(0,T]} + j(\widetilde{\bf v})-j(\widetilde{\bf u})\nonumber\\ &\qquad\geq \langle {\bf f}, \partial_{t,\parallel}({\bf v}-\widetilde{\bf v}-({\bf u}-\widetilde{\bf u}))\rangle_{0,\Gamma_\Sigma,(0,T]}.
\end{align}}

{The precise statement of the mixed formulation corresponding to the variational inequality \eqref{eq:VarIneq2b} reads as follows:} \\

\noindent {\textit{find} $(\mathbf{u},\widetilde{\mathbf{u}},{\pmb{\lambda}}) \in H^{1/2}([0,T],\tilde{H}^{1/2}(\Gamma_\Sigma))^d \times H^{1/2}([0,T],\tilde{H}^{1/2}(\Gamma_C))^d \times M^+(\mathcal{F})$ \textit{such that for all} $(\textbf{v},\widetilde{\mathbf{v}},{\pmb{\mu}}) \in H^{1/2}([0,T],\tilde{H}^{1/2}(\Gamma_\Sigma))^d \times H^{1/2}([0,T],\tilde{H}^{1/2}(\Gamma_C))^d \times M^+(\mathcal{F})$
\begin{subequations} \label{eq:MixedProblem2b}
 \begin{alignat}{2}
\left\langle \mathcal{S}_1\textbf{u},\textbf{v} \right\rangle_{0,\Gamma_\Sigma,(0,T]} + \left\langle \mathcal{S}_2(\textbf{u}-\widetilde{\mathbf{u}}),\textbf{v}-\widetilde{\mathbf{v}} \right\rangle_{0,\Gamma_\Sigma,(0,T]} - \left\langle \pmb{\lambda},\widetilde{\mathbf{v}} \right\rangle_{0,\Gamma_C,(0,T]}&= \left\langle \textbf{f},\textbf{v} -\widetilde{\mathbf{v}}\right\rangle_{0,\Gamma_\Sigma,(0,T]}  \label{eq:WeakMixedVarEq2b}\\
\left\langle \pmb{\mu} -\pmb{\lambda},\partial_{t,\parallel}\widetilde{\mathbf{u}} \right\rangle_{0,\Gamma_C,(0,T]} & \geq 0 .\label{eq:ContContactConstraints2b}
\end{alignat}
\end{subequations}}

\begin{theorem}\label{equivtheorem:2b}
{The variational inequality \eqref{eq:VarIneq2b}, the  mixed formulation \eqref{eq:MixedProblem2b} and the boundary value problem 
\eqref{strongformPDE2b} -  \eqref{sfdn2} are equivalent.}
\end{theorem}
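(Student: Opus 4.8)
The plan is to follow the two-step pattern of Proposition~\ref{prop:varineq} and Theorem~\ref{equivtheorem:unilateral}, now carrying along the two bodies. The dictionary between the formulations is $\mathbf{u} = \mathbf{u}_1|_{(0,T]\times\Gamma_\Sigma}$, $\widetilde{\mathbf{u}} = (\mathbf{u}_1-\mathbf{u}_2)|_{(0,T]\times\Gamma_\Sigma}$ and $\pmb{\lambda} = \mathbf{p}_1|_{(0,T]\times\Gamma_C}$, so that $\mathbf{u}-\widetilde{\mathbf{u}}$ is the trace of $\mathbf{u}_2$ and $\mathcal{S}_1\mathbf{u} = \mathbf{p}_1$, $\mathcal{S}_2(\mathbf{u}-\widetilde{\mathbf{u}}) = \mathbf{p}_2$. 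As before, the displacements $\mathbf{u}_j$ in $\Omega_j$ are recovered from their boundary traces via the representation formula \eqref{representation formula}, so only the boundary conditions have to be verified.

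First I would show that a solution of \eqref{strongformPDE2b}--\eqref{sfdn2} yields a solution of \eqref{eq:VarIneq2b}. Given $(\mathbf{v},\widetilde{\mathbf{v}}) \in \mathcal{C}$, one inserts $\mathcal{S}_1\mathbf{u} = \mathbf{p}_1$, $\mathcal{S}_2(\mathbf{u}-\widetilde{\mathbf{u}}) = \mathbf{p}_2$ and splits over $\Gamma_\Sigma = \Gamma_I\cup\Gamma_C$. On $\Gamma_I$ one has $\widetilde{\mathbf{v}} = \widetilde{\mathbf{u}} = \mathbf{0}$, so the two Poincar\'e--Steklov terms collapse to $\langle \mathbf{p}_1+\mathbf{p}_2, \partial_{t,\parallel}(\mathbf{v}-\mathbf{u})\rangle_{0,\Gamma_I,(0,T]}$, which cancels against the right-hand side there by the traction transmission condition \eqref{prob:strong_formulation2b_1}. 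On $\Gamma_C$ the normal contact conditions \eqref{fullcontactbc12b} force $\mathbf{p}_1+\mathbf{p}_2 = \mathbf{f}$, so the terms paired with $\partial_{t,\parallel}(\mathbf{v}-\mathbf{u})$ drop out, and with $\pmb{\lambda} = \mathbf{f}-\mathbf{p}_2$ the left-hand side reduces to $\langle\pmb{\lambda}, \partial_{t,\parallel}(\widetilde{\mathbf{v}}-\widetilde{\mathbf{u}})\rangle_{0,\Gamma_C,(0,T]} + j(\widetilde{\mathbf{v}}) - j(\widetilde{\mathbf{u}})$. Decomposed into normal and tangential parts this is nonnegative: the normal part by $\lambda_\perp\geq0$, $\widetilde{v}_\perp\geq0$ and the complementarity $\lambda_\perp\widetilde{u}_\perp=0$ in \eqref{fullcontactbc12blambda}; the tangential part exactly as in Proposition~\ref{prop:varineq}, using $\|\pmb{\lambda}_\parallel\|\leq\mathcal{F}$, the friction complementarity in \eqref{fullcontactbc22blambda} and the triangle inequality for $j$.

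For the converse I would test \eqref{eq:VarIneq2b} with well-chosen pairs, as in Proposition~\ref{prop:varineq}. Taking $\widetilde{\mathbf{v}} = \widetilde{\mathbf{u}}$ and $\mathbf{v} = \mathbf{u}\pm\mathbf{w}$ with $\mathbf{w}$ vanishing on $\Gamma_D$ kills the $j$-terms and forces $\mathcal{S}_1\mathbf{u}+\mathcal{S}_2(\mathbf{u}-\widetilde{\mathbf{u}})=\mathbf{f}$ on $\Gamma_\Sigma$, i.e.~$\mathbf{p}_1+\mathbf{p}_2=\mathbf{f}$; restricted to $\Gamma_I$ this is \eqref{prob:strong_formulation2b_1}, while the displacement transmission condition $\mathbf{u}_1=\mathbf{u}_2$ on $\Gamma_I$ is encoded in $\mathcal{C}$ through $\widetilde{\mathbf{v}}=\mathbf{0}$ there. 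The remaining contact conditions \eqref{fullcontactbc12blambda}, \eqref{fullcontactbc22blambda} on $\Gamma_C$ are then obtained by varying $\widetilde{\mathbf{v}}$: perturbing only $\widetilde{v}_\perp$ yields $\lambda_\perp\geq0$ and, after a further perturbation, $\lambda_\perp\widetilde{u}_\perp=0$; perturbing only $\widetilde{\mathbf{v}}_\parallel$ together with the triangle inequality yields $\|\pmb{\lambda}_\parallel\|\leq\mathcal{F}$; and the choice $\widetilde{\mathbf{v}} = \widetilde{\mathbf{u}}\pm\widetilde{\mathbf{u}}_\parallel$ yields the friction identity, verbatim as in the unilateral proof.

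The equivalence with the mixed formulation \eqref{eq:MixedProblem2b} proceeds as in Theorem~\ref{equivtheorem:unilateral}. From a solution of the boundary value problem, $\pmb{\lambda}=\mathbf{p}_1|_{\Gamma_C}\in M^+(\mathcal{F})$ by the weak-form argument preceding Theorem~\ref{equivtheorem:unilateral}, \eqref{eq:WeakMixedVarEq2b} holds by the definition of $\pmb{\lambda}$ together with $\mathbf{p}_1+\mathbf{p}_2=\mathbf{f}$ on $\Gamma_\Sigma$, and \eqref{eq:ContContactConstraints2b} follows from $\lambda_\perp\widetilde{u}_\perp=0$, $\widetilde{u}_\perp\geq0$ and the friction complementarity, exactly as there. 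Conversely, rearranging \eqref{eq:WeakMixedVarEq2b} over $\Gamma_I\cup\Gamma_C$ identifies $\mathbf{p}_1+\mathbf{p}_2=\mathbf{f}$ on $\Gamma_\Sigma$ and $\pmb{\lambda}=\mathbf{p}_1$ on $\Gamma_C$, and testing \eqref{eq:ContContactConstraints2b} with $\pmb{\mu}$ obtained from $\pmb{\lambda}$ by perturbing the normal, respectively tangential, component recovers the contact conditions on $\Gamma_C$. I expect the only genuinely new point over the unilateral case to be the bookkeeping at $\Gamma_I$: checking that the two Poincar\'e--Steklov contributions there assemble into the single equation $\mathbf{p}_1+\mathbf{p}_2=\mathbf{f}$, and that, since $M^+(\mathcal{F})$ lives only on $\Gamma_C$, no multiplier is required on $\Gamma_I$; the analysis on $\Gamma_C$ is essentially a repetition of the arguments already carried out for unilateral contact.
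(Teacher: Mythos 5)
Your proposal is correct and follows essentially the same route as the paper: the same dictionary $\mathbf{u}=\mathbf{u}_1|_{\Gamma_\Sigma}$, $\widetilde{\mathbf{u}}=(\mathbf{u}_1-\mathbf{u}_2)|_{\Gamma_\Sigma}$, $\pmb{\lambda}=\mathbf{p}_1|_{\Gamma_C}$, recovery of $\mathbf{u}_j$ by the representation formula, and the same test-function choices (two-sided perturbations of $\mathbf{v}$ for the transmission condition $\mathbf{p}_1+\mathbf{p}_2=\mathbf{f}$, normal and tangential perturbations of $\widetilde{\mathbf{v}}$ resp.\ $\pmb{\mu}$ for the contact and friction conditions), mirroring Proposition~\ref{prop:varineq} and Theorem~\ref{equivtheorem:unilateral}. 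The only differences are cosmetic: you spell out the easier direction (boundary value problem implies variational inequality), which the paper only remarks on, while the paper is slightly more careful that the tangential pairing with $\partial_{t,\parallel}$ first yields $\dot{\mathbf{p}}_{1,\parallel}+\dot{\mathbf{p}}_{2,\parallel}=\dot{\mathbf{f}}_\parallel$ and the vanishing initial data are then used to integrate in time.
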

\begin{proof}The displacement in $(0,T]\times  \Gamma_D$ is imposed in all the formulations, so that it suffices to consider the boundary conditions in $(0,T]\times  \Gamma_\Sigma$.

{We first show that the variational inequality \eqref{eq:VarIneq2b} implies the boundary value problem 
\eqref{strongformPDE2b} -  
\eqref{sfdn2}. As in Proposition \ref{prop:varineq}, the converse assertion, i.e. that the boundary value problem implies the variational inequality, is easier. \\
The solutions $\mathbf{u}_1, \mathbf{u}_2$ to the differential equations \eqref{strongformPDE2b} in $\Omega_1$ and $\Omega_2$ are recovered from $(\mathbf{u},\widetilde{\mathbf{u}})$ using the representation formula, as usual for boundary element methods. The transmission condition  \eqref{prob:strong_formulation2b_2} follows from $\widetilde{\mathbf{u}}=\mathbf{0}$ on $(0,T]\times  \Gamma_I$.}

{We now choose $(\textbf{v},\widetilde{\mathbf{v}})=(\mathbf{u}\pm\mathbf{w},\widetilde{\mathbf{u}})$ in \eqref{eq:VarIneq2b}. We obtain
\begin{align*}
\pm \langle {\mathcal{S}_1 {\bf u}}, \partial_{t,\parallel}{\bf w}\rangle_{0,\Gamma_\Sigma,(0,T]} \pm \langle {\mathcal{S}_2 ({\bf u}-\widetilde{\mathbf{u}})}, \partial_{t,\parallel}{\bf w}\rangle_{0,\Gamma_\Sigma,(0,T]} \geq \pm\langle {\bf f}, \partial_{t,\parallel}{\bf w}\rangle_{0,\Gamma_\Sigma,(0,T]}, 
\end{align*}
and hence
$$\langle {\mathcal{S}_1 {\bf u}}, \partial_{t,\parallel}{\bf w}\rangle_{0,\Gamma_\Sigma,(0,T]} + \langle {\mathcal{S}_2 ({\bf u}-\widetilde{\mathbf{u}})}, \partial_{t,\parallel}{\bf w}\rangle_{0,\Gamma_\Sigma,(0,T]} = \langle {\bf f}, \partial_{t,\parallel}{\bf w}\rangle_{0,\Gamma_\Sigma,(0,T]}.$$
Because $\bf w$ is arbitrary, we conclude the transmission conditions $\dot{\bf p}_{1,\parallel}+\dot{\bf p}_{2,\parallel}=\partial_t\mathcal{S}_1 {\bf u}_\parallel + \partial_t\mathcal{S}_2 ({\bf u}-\widetilde{\bf u})_\parallel=\partial_t{\bf f}_\parallel$ and ${p}_{1,\perp}+{p}_{2,\perp}=\mathcal{S}_1 {\bf u}_\perp + \mathcal{S}_2 ({\bf u}-\widetilde{\bf u})_\perp=f_\perp$. 
Using $\mathbf{p}_{1,\parallel}+\mathbf{p}_{2,\parallel} = \mathbf{0}$ at $t=0$, $\mathbf{p}_{1,\parallel}+\mathbf{p}_{2,\parallel} = \mathbf{f}$ for all $t>0$. In particular \eqref{prob:strong_formulation2b_1}, the last equation in \eqref{fullcontactbc12b} and the first equation in \eqref{fullcontactbc22b} follow.}

{We now show that $p_{1,\perp}\geq 0$ and $p_{2,\perp}\leq  f_\perp$. To do so, we choose $(\textbf{v},\widetilde{\mathbf{v}})=(\mathbf{u},\widetilde{\mathbf{u}}+\widetilde{\mathbf{w}})$ in \eqref{eq:VarIneq2b}, with $\widetilde{\bf w}_\parallel = \mathbf{0}$ and $\widetilde{w}_\perp \geq -\tilde{u}_\perp$.
Since $j(\widetilde{\mathbf{u}}+ \widetilde{\bf w})-j(\widetilde{\mathbf{u}}) \leq j(\widetilde{\bf w})=
0$, the variational inequality \eqref{eq:VarIneq2b} simplifies to\\
$$\langle {\mathcal{S}_2 ({\bf u}-\widetilde{\mathbf{u}})}_\perp, -\widetilde{w}_\perp\rangle_{0,\Gamma_\Sigma,(0,T]} \geq \langle f_\perp, -\widetilde{w}_\perp\rangle_{0,\Gamma_\Sigma,(0,T]}.$$
Because $\widetilde{w}_\perp \geq 0$ is arbitrary, we conclude $p_{2,\perp} = {\mathcal{S}_2 ({\bf u}-\widetilde{\mathbf{u}})}_\perp \leq f_\perp$ and $p_{1,\perp} = f_\perp -p_{2,\perp}\geq 0$, as required in \eqref{fullcontactbc12b}.}

{To show the remaining assertion in \eqref{fullcontactbc12b}, we choose $\mathbf{v} = \mathbf{u}$ in \eqref{eq:VarIneq2b}. This implies
$$ \langle {\mathcal{S}_2 ({\bf u}-\widetilde{\mathbf{u}})}, \partial_{t,\parallel}(-\widetilde{\mathbf{v}}+\widetilde{\mathbf{u}})\rangle_{0,\Gamma_\Sigma,(0,T]} +j(\widetilde{\mathbf{v}})-j(\widetilde{\mathbf{u}})\geq \langle {\bf f}, \partial_{t,\parallel}(-\widetilde{\mathbf{v}}+\widetilde{\mathbf{u}})\rangle_{0,\Gamma_\Sigma,(0,T]}.$$
If $\widetilde{\mathbf{u}}_\perp>0$ in a neighborhood of $(t,{\bf x}) \in (0,T]\times  \Gamma_C$, we let $\widetilde{\mathbf{v}} = \widetilde{\mathbf{u}}\pm \widetilde{\bf w}$ with $\widetilde{{v}}_\perp \geq 0$ and $\widetilde{\bf w}_\parallel = \mathbf{0}$. Using, as above, that $j(\widetilde{\mathbf{u}}\pm \widetilde{\bf w})-j(\widetilde{\mathbf{u}}) \leq 0$, we obtain
$$ \mp\langle {\mathcal{S}_2 ({\bf u}-\widetilde{\mathbf{u}})}_\perp, \widetilde{w}_\perp\rangle_{0,\Gamma_\Sigma,(0,T]} \geq \mp\langle f_\perp, \widetilde{w}_\perp\rangle_{0,\Gamma_\Sigma,(0,T]}$$
and therefore
$$\langle {\mathcal{S}_2 ({\bf u}-\widetilde{\mathbf{u}})}_\perp, \widetilde{w}_\perp\rangle_{0,\Gamma_\Sigma,(0,T]} =\langle f_\perp, \widetilde{w}_\perp\rangle_{0,\Gamma_\Sigma,(0,T]}.$$
We conclude that ${\mathcal{S}_2 ({\bf u}-\widetilde{\mathbf{u}})}({\bf x})_\perp = f({\bf x})_\perp$. Therefore, $p_{2,\perp} - f_\perp = 0$ and $p_{1,\perp} = f_\perp - p_{2,\perp} = 0$ whenever $u_{1,\perp}-u_{2,\perp}> g_{\perp}=0$, as required in \eqref{fullcontactbc12b}.}

{It remains to analyze the conditions satisfied by the tangential components to obtain the friction condition \eqref{fullcontactbc22b}. To show that $\| \mathbf{p}_{1,\parallel} \|  \leq \mathcal{F}$, we choose $(\textbf{v},\widetilde{\mathbf{v}})=(\mathbf{u},\widetilde{\mathbf{u}}\pm\widetilde{\mathbf{w}})$ in \eqref{eq:VarIneq2b}, with $\widetilde{w}_\perp = 0$. With this choice and using $j(\widetilde{\mathbf{u}}\pm \widetilde{\bf w})-j(\widetilde{\mathbf{u}}) \leq j(\widetilde{\bf w})
$, the variational inequality \eqref{eq:VarIneq2b} becomes 
\begin{equation}\label{auxeq}
\mp\langle {\mathcal{S}_2 ({\bf u}-\widetilde{\mathbf{u}})}_\parallel, \dot{\widetilde{\mathbf{w}}}_\parallel\rangle_{0,\Gamma_\Sigma,(0,T]}
+ \langle  \mathcal{F},\|\dot{\widetilde{\bf w}}_\parallel\| \rangle_{0,\Gamma_C,(0,T]}
\geq \mp\langle \mathbf{f}_\parallel, \dot{\widetilde{\mathbf{w}}}_\parallel\rangle_{0,\Gamma_\Sigma,(0,T]}.
\end{equation}
As $\widetilde{\bf w}_\parallel$ is arbitrary, we conclude $\|{\mathcal{S}_2 ({\bf u}-\widetilde{\mathbf{u}})}_\parallel-  \mathbf{f}_{\parallel}\|=\| \mathbf{p}_{2,\parallel} -  \mathbf{f}_{\parallel} \| \leq  \mathcal{F}$. Hence $\| \mathbf{p}_{1,\parallel} \| = \| \mathbf{p}_{2,\parallel} -  \mathbf{f}_{\parallel} \| \leq  \mathcal{F}$, as asserted in \eqref{fullcontactbc22b}.}

{We conclude by showing the final assertion in \eqref{fullcontactbc22b}, that $$\mathbf{p}_{1,\parallel} \cdot\widetilde{\mathbf{u}}_\parallel+\mathcal{F}\| \dot{\widetilde{\mathbf{u}}}\| =0.$$ This follows by choosing $\widetilde{\bf w}_\parallel = \widetilde{\mathbf{u}}_\parallel$ in \eqref{auxeq} and using ${\bf p}_{1,\parallel}={\bf f}_{{\parallel}}-{\bf p}_{2,\parallel}$.}\\

We now demonstrate the equivalence of the mixed formulation \eqref{eq:MixedProblem2b} and the boundary value problem 
\eqref{strongformPDE2b} -  \eqref{sfdn2}. As above, we focus on the derivation of the boundary value problem from the mixed formulation. The proof is similar to the proof of Theorem \ref{equivtheorem:unilateral} for the unilateral contact problem,  

First, equation \eqref{eq:WeakMixedVarEq2b} is readily seen to be equivalent to the definition $\pmb{\lambda} = \mathbf{p}_{1}|_{\Gamma_C}=(\mathbf{f}-\mathbf{p}_{2})|_{\Gamma_C}$ and the transmission condition $\mathbf{p}_{1}+\mathbf{p}_{2}=\mathbf{f}$. \\
Further, ${\pmb{\lambda}}\in M^+(\mathcal{F})$ assures the inequalities $\lambda_\perp \geq 0$ for the perpendicular component and $\|\pmb{\lambda}_\parallel\|\leq \mathcal{F}$ for the parallel component, a.e. on $(0,T]\times  \Gamma_C$. To see that $\widetilde{u}_\perp\geq 0$, consider
 $\pmb{\mu} =  \pmb{\lambda}+ \mathbf{w}\in M^+(\mathcal{F})$  with $w_\perp\geq 0$ in \eqref{eq:ContContactConstraints2b}:
$$\left\langle w_\perp,\widetilde{u}_\perp \right\rangle_{0,\Gamma_C,(0,T]}  \geq 0. $$
As $w_\perp\geq 0$ is arbitrary, $\widetilde{u}_\perp\geq 0$ follows almost everywhere on $(0,T]\times  \Gamma_C$.

Finally, choosing  
$\pmb{\mu} = \pmb{\lambda}_\parallel$, respectively $\pmb{\mu} =  \pmb{\lambda}_\parallel-2\lambda_\perp \mathbf{n}$ in \eqref{eq:ContContactConstraints2b}, we obtain
$$\pm \left\langle \lambda_\perp,\widetilde{u}_\perp \right\rangle_{0,\Gamma_C,(0,T]}  \geq 0, $$
and hence $\left\langle \lambda_\perp,\widetilde{u}_\perp \right\rangle_{0,\Gamma_C,(0,T]} =0.$ From the previous paragraph, $\lambda_\perp \geq 0,\widetilde{u}_\perp \geq 0$, so that $\lambda_\perp \widetilde{u}_\perp = 0$ holds a.e. on $(0,T]\times  \Gamma_C$. 

This completes the proof of the nonpenetration boundary conditions in the normal component, in the form of \eqref{fullcontactbc12blambda}, and it only remains to verify $\pmb{\lambda}_{\parallel} \cdot\dot{\widetilde{\mathbf{u}}}+\mathcal{F}\| \dot{\widetilde{\mathbf{u}}}\| =0$ in \eqref{fullcontactbc22blambda}.
For this we may assume $\dot{\widetilde{\mathbf{u}}}\neq \mathbf{0}$.  Choose $\pmb{\mu} \in M^+(\mathcal{F})$ with $\mu_\perp =\lambda_\perp$ and $\pmb{\mu}_\parallel = - \mathcal{F} \frac{\dot{\widetilde{\mathbf{u}}}}{\|\dot{\widetilde{\mathbf{u}}}\|}$, respectively $\pmb{\mu}_\parallel = 2 \pmb{\lambda}_\parallel+ \mathcal{F} \frac{\dot{\widetilde{\mathbf{u}}}}{\|\dot{\widetilde{\mathbf{u}}}\|}$, in \eqref{eq:ContContactConstraints2b}, to obtain $$0\leq \left\langle \pmb{\mu}_\parallel -\pmb{\lambda}_\parallel,\dot{\widetilde{\mathbf{u}}}_\parallel \right\rangle_{0,\Gamma_C,(0,T]} = \textstyle{\mp\left\langle \mathcal{F} \frac{\dot{\widetilde{\mathbf{u}}}_\parallel}{\|\dot{\widetilde{\mathbf{u}}}_\parallel\|} +\textbf{p}_{{1,}\parallel},\dot{\widetilde{\mathbf{u}}}_\parallel \right\rangle_{0,\Gamma_C,(0,T]}}.$$ This shows $0=\left\langle \mathcal{F} \frac{\dot{\widetilde{\mathbf{u}}}_\parallel}{\|\dot{\widetilde{\mathbf{u}}}_\parallel\|} +\textbf{p}_{{1,}\parallel},\dot{\widetilde{\mathbf{u}}}_\parallel \right\rangle_{0,\Gamma_C,(0,T]}$, and therefore $0=\mathcal{F}\|\dot{\widetilde{\mathbf{u}}}_\parallel\|+\mathbf{p}_{{1,}\parallel}\cdot \dot{\widetilde{\mathbf{u}}}_\parallel$ a.e. on $(0,T]\times  \Gamma_C$. 
\end{proof}

The mixed problem \eqref{eq:MixedProblem2b} is discretized as follows:\\

\noindent {\textit{find} $(\mathbf{u}_{h,\Delta t},\widetilde{\mathbf{u}}_{h,\Delta t},\pmb{\lambda}_{H,\Delta T}) \in (X^0_{h,\Gamma_\Sigma}\otimes V^0_{\Delta t})^d \times (X^0_{h,\Gamma_C}\otimes V^0_{\Delta t})^d \times M^+_{H,\Delta T}(\mathcal{F})$  \textit{such that for all}\\ $(\mathbf{v}_{h,\Delta t},\widetilde{\mathbf{v}}_{h,\Delta t},\pmb{\mu}_{H,\Delta T}) \in (X^0_{h,\Gamma_\Sigma}\otimes V^0_{\Delta t})^d \times (X^0_{h,\Gamma_C}\otimes V^0_{\Delta t})^d \times M^+_{H,\Delta T}(\mathcal{F})$
\begin{subequations} \label{eq:MixedProblem2bh}
 \begin{alignat}{2}
\left\langle \mathcal{S}_{1,h,\Delta t}\mathbf{u}_{h,\Delta t},\mathbf{v}_{h,\Delta t} \right\rangle_{0,\Gamma_\Sigma,(0,T]} + \left\langle \mathcal{S}_{2,h,\Delta t}(\mathbf{u}_{h,\Delta t}-\widetilde{\mathbf{u}}_{h,\Delta t}),\mathbf{v}_{h,\Delta t}-\widetilde{\mathbf{v}}_{h,\Delta t} \right\rangle_{0,\Gamma_\Sigma,(0,T]}\hspace*{-1.7cm}&\nonumber\\ - \left\langle \pmb{\lambda}_{H,\Delta T}, \widetilde{\mathbf{v}}_{h,\Delta t} \right\rangle_{0,\Gamma_C,(0,T]}&= \left\langle \textbf{f},\mathbf{v}_{h,\Delta t} -\widetilde{\mathbf{v}}_{h,\Delta t}\right\rangle_{0,\Gamma_\Sigma,(0,T]}  \label{eq:WeakMixedVarEq2bh}\\
\left\langle \pmb{\mu}_{H,\Delta T} -\pmb{\lambda}_{H,\Delta T},\partial_{t,\parallel}\widetilde{\mathbf{u}}_{h,\Delta t} \right\rangle_{0,\Gamma_C,(0,T]} & \geq 0 .\label{eq:ContContactConstraints2bh}
\end{alignat}
\end{subequations}}

\vspace{0.2in}
{\bf Remark.} The bilateral frictional contact problem has been introduced with respect to the case of  Tresca friction. In case of Coulomb friction, the above notation related to $\mathcal{F}$, $M^+(\mathcal{F})$, $M^+_{H,\Delta T}(\mathcal{F})$  has to be substituted by $\mathcal{F}_c \lambda_\perp$, 
 $M^+(\mathcal{F}_c\lambda_\perp)$, $M^+_{H,\Delta T}(\mathcal{F}_c\lambda_{\perp, H, \Delta T})$, respectively.

\section{Algorithmic details}\label{sec:psh}

The algebraic reformulation of the proposed approach is based on the discretization of the Poincar\'{e}-Steklov operator, which will be written in terms of space-time boundary integral operators as detailed below.

\subsection{Boundary integral representation of ${\cal S}$}
\label{birs}

The Poincar\'{e}-Steklov operator \eqref{operator_S} can be implemented by means of classical boundary integral operators, thereby allowing efficient computation. We consider the time-dependent single layer and double layer potential operators $V$ and $K$, that act onto the generic fields $\pmb{ \phi}$ and $\pmb{\psi}$, for $(t,{\bf x}) \in (0,T]\times  \Omega$, as follows:
$$[V \pmb{\phi}](t,\mathbf{x}) = \int_0^t\int_\Gamma G(t,\tau;\mathbf{x},\mathbf{y})\: {\pmb{\phi}}(\tau,\mathbf{y}) d\Gamma_{\mathbf{y}} d\tau,\:\:
[K \pmb{\psi}](t,\mathbf{ x}) =\int_0^t \int_\Gamma  \left[\sigma_{\mathbf{y}}\left(G\right)^\top (t,\tau;\mathbf{x},\mathbf{y})\mathbf{n}_{\mathbf{y}}\right] \pmb{ \psi}(\tau,\mathbf{y})  d\Gamma_{\mathbf{y}} d\tau,
$$
where $G$ is the fundamental solution to \eqref{navierlame}, whose explicit expression for $d=2,3$ can be found in \cite{ourpaper2}.
The subscript $\mathbf{y}$ applied to the stress tensor $\sigma$ denotes the variable for the application of the spatial derivative, while for the vector $\mathbf{n}$ it declares the point of $\Gamma$ where we are considering the normal direction.

The unknown displacement $\textbf{u}$ can be represented by the representation formula 
\begin{equation}\label{representation formula}
\textbf{u}=V\textbf{p}-K\textbf{u},\quad \textrm{in} \; (0,T]\times  \Omega.
\end{equation}
Letting $\textbf{x}\in\Omega\rightarrow \textbf{x}\in \Gamma$ in \eqref{representation formula}, we deduce the classical boundary integral equation 
\begin{equation}\label{first_BIE}
\frac{1}{2}\textbf{u}=\mathcal{V}\textbf{p}-\mathcal{K}\textbf{u},\quad \textrm{in}\; (0,T]\times  \Gamma,
\end{equation}
taking into account the free term $\frac{1}{2}\textbf{u}$ generated by the operator $K$ pursuant the limiting process.
Moreover, we need also to introduce the adjoint double layer operator $\mathcal{K}^\star$ and the hypersingular integral operator $\mathcal{W}$ for $(\textbf{x},t)\in (0,T\times\Gamma$:
\begin{align*}
[\mathcal{K}^\star \pmb{\phi}](t,\mathbf{x})& = \int_0^t\int_\Gamma \left[\sigma_{\mathbf{x}} \left(G\right)(t,\tau;\mathbf{x},\mathbf{y})\mathbf{n}_{\mathbf{x}}\right]  {\pmb{\phi}}(\tau,\mathbf{y}) d\Gamma_{\mathbf{y}}\ d\tau,\\
[\mathcal{W} \pmb{\psi}](t,\mathbf{ x}) &=\int_0^t \int_\Gamma  \left[\sigma_\textbf{x}\left(\sigma_{\mathbf{y}}\left(G\right)^\top (t,\tau;\mathbf{x},\mathbf{y})\mathbf{n}_{\mathbf{y}}\right)\textbf{n}_{\textbf{x}}\right] \pmb{ \psi}(\tau,\mathbf{y})  d\Gamma_{\mathbf{y}}\ d\tau\,,
\end{align*}
involved in the boundary integral equation
\begin{equation}\label{second_BIE}
\frac{1}{2}\textbf{p}=\mathcal{K}^*\textbf{p}-\mathcal{W}\textbf{u},\quad \textrm{in}\; (0,T]\times  \Gamma.
\end{equation}
The Poincar\'{e}-Steklov operator $\mathcal{S}$ 
defined in \eqref{operator_S}
can be expressed by two equivalent forms, based on the previously introduced integral operators, namely: the non-symmetric formulation
\begin{equation}\label{definition_S}
\mathcal{S}=\mathcal{V}^{-1}\left(\mathcal{K}+\frac{1}{2}\right)
\end{equation}
and the symmetric formulation
\begin{equation}\label{definition_S_symm}
\mathcal{S}=\left(\mathcal{K}^*+\frac{1}{2}\right)\mathcal{V}^{-1}\left(\mathcal{K}+\frac{1}{2}\right)-\mathcal{W} \,.
\end{equation}

{{\bf Remark.} 
While the symmetric and non-symmetric formulations are equivalent for the continuous problem, the resulting discrete formulations are no longer equivalent and have different advantages and disadvantages. The symmetric formulation relates to the elastic energy and is much better understood mathematically. Meanwhile, the non-symmetric formulation is easier to implement,  computationally cheaper and therefore preferred by engineers. Both formulations are studied in the numerical experiments in Section \ref{sec;numres}.}

\subsection{Implementation of ${\cal S}_{h,\Delta t}$ and related linear system}
\label{subPS}

From now on, to simplify the notation and because the numerical results reported in Section \ref{sec;numres}
are related to two-dimensional elastodynamics, we will fix $d = 2$.\\
At every iteration of the Uzawa algorithm, as described in Algorithm \ref{alg1}, we need to solve the weak equation
\eqref{eq:WeakMixedVarEqh}, involving $\mathcal{S}_{h,\Delta t}$. For the implementation of  the Poincar\'{e}-Steklov operator,
both the symmetric and the non-symmetric formulations  
will be used. While they are equivalent for the continuous problem, the resulting discretizations will lead to slightly different numerical results. 
Moreover, for the stability of the time-stepping scheme, we consider an energetic weak formulation which involves time derivatives of test functions.\\
In general, for a given right-hand side $\widetilde{\textbf{f}}\in H^{1/2}\left([0,T],H^{-1/2}(\Gamma_\Sigma)\right)^2$ we have to solve a weak problem of the form:\\
\textit{find} $\textbf{u}_{h,\Delta t} \in (X^0_{h,\Gamma_\Sigma}\otimes V^0_{\Delta t})^2$ \textit{such that} 
\begin{equation}\label{eq:Scontinuous}
\langle \mathcal{S} _{h,\Delta t}\textbf{u}_{h,\Delta t},\dot{\textbf{v}}_{h,\Delta t}\rangle_{0,\Gamma_\Sigma,(0,T]} = \langle \widetilde{\textbf{f}},\dot{\textbf{v}}_{h,\Delta t} \rangle_{0,\Gamma_\Sigma,(0,T]}, \quad \quad \forall\,\textbf{v}_{h,\Delta t} \in (X^0_{h,\Gamma_\Sigma}\otimes V^0_{\Delta t})^2.
\end{equation}

We first consider the definition of the Poincar\'{e}-Steklov operator in its symmetric form
: because of its dependence on the inverse operator ${\cal V}^{-1}$, to handle the equation at hand we need to define the dummy variable $\pmb{\psi}_{h,\Delta t}:=\mathcal{V}^{-1}\left(\mathcal{K}+\frac{1}{2}\right)\textbf{u}_{h,\Delta t}$ , belonging to the space $(X^{-1}_{h,\Gamma}\otimes V^{-1}_{\Delta t})^2$.

This translates in solving at every step of the Uzawa algorithm the following system of weak boundary integral equations: \\ 

\textit{find} $\textbf{u}_{h,\Delta t}\in (X^0_{h,\Gamma_\Sigma}\otimes V^0_{\Delta t})^2$ \textit{and} $\pmb{\psi}_{h,\Delta t}\in (X^{-1}_{h,\Gamma}\otimes V^{-1}_{\Delta t})^2$ \textit{such that}
\begin{equation}\label{energetic_formh}
\begin{array}{l}
-\left\langle \mathcal{V}\pmb{\psi}_{h,\Delta t},\dot{\pmb{\eta}}_{h,\Delta t}\right\rangle_{0,\Gamma,(0,T]} +\left\langle\left(\mathcal{K}+\frac{1}{2}\right)\textbf{u}_{h,\Delta t},\dot{\pmb{\eta}}_{h,\Delta t}\right\rangle_{0,\Gamma,(0,T]}=0, \quad\quad\quad\quad\quad\quad\quad\forall\, \pmb{\eta}_{h,\Delta t}\in(X^{-1}_{h,\Gamma}\otimes V^{-1}_{\Delta t})^2\\[4pt]
\left\langle\left(\mathcal{K}^*+\frac{1}{2}\right)\pmb{\psi}_{h,\Delta t}, \dot{\textbf{v}}_{h,\Delta t}\right\rangle_{0,\Gamma_\Sigma,(0,T]}-\left\langle \mathcal{W}\textbf{u}_{h,\Delta t}, \dot{\textbf{v}}_{h,\Delta t}\right\rangle_{0,\Gamma_\Sigma,(0,T]}=\left\langle\widetilde{\textbf{f}},\dot{\textbf{v}}_{h,\Delta t}\right\rangle_{0,\Gamma_\Sigma0,T]}, \forall\,\textbf{v}_{h,\Delta t}\in (X^0_{h,\Gamma_\Sigma}\otimes V^0_{\Delta t})^2.
\end{array}
\end{equation}

Let us now introduce the sets $\left\lbrace w^{(\pmb{\psi})}_m \right\rbrace_{m=1}^{M^{(\pmb{\psi})}_h}$ and $\left\lbrace w^{(\textbf{u})}_m \right\rbrace_{m=1}^{M^{(\textbf{u})}_h}$, which contain the piecewise linear basis functions of $X^{-1}_{h,\Gamma}$ {(here we could also choose piecewise constant basis functions, as they belong to $L^2(\Gamma)$, too)} and the piecewise linear {continuous} basis functions of $X^{0}_{h,\Gamma_\Sigma}$, respectively. The approximate components of the unknowns, in the spaces $X^{-1}_{h,\Gamma}\otimes V^{-1}_{\Delta t}$ and $X^0_{h,\Gamma_\Sigma}\otimes V^0_{\Delta t}$, will be of the following form:
\begin{equation}\label{approx_of_psi_and_u}
\psi_{i,h,\Delta t}(t,\textbf{x})=\sum_{\ell=0}^{N_{\Delta t}-1}\sum_{m=1}^{M^{(\pmb{\psi})}_h} \psi_{i,\ell,m}w^{(\pmb{\psi})}_m(\textbf{x})v_\ell(t),\:\:u_{i,h,\Delta t}(t,\textbf{x})=\sum_{\ell=0}^{N_{\Delta t}-1}\sum_{m=1}^{M^{(\textbf{u})}_h}u_{i,\ell,m}w^{(\textbf{u})}_m(\textbf{x})r_\ell(t),\:\: i=1,2,
\end{equation}
where the time basis $v_k$ and $r_k$ are defined as
$$v_\ell(t):=H[t-t_\ell]-H[t-t_{\ell+1}], \quad r_\ell(t):=H[t-t_\ell]\frac{t-t_\ell}{\Delta t}-H[t-t_{\ell+1}]\frac{t-t_{\ell+1}}{\Delta t}.$$

These choices for the approximation of $\textbf{u}_{h,\Delta t},\pmb{\psi}_{h,\Delta t}$ lead to the algebraic reformulation of \eqref{energetic_formh} as a linear system $\mathbb{S}\textbf{X}=\widetilde{\textbf{F}}$, having the form 
\begin{equation}\label{system}
\left(\begin{array}{cccc}
\mathbb{S}^{(0)} & \textbf{0} & \cdots & \textbf{0}\\
\mathbb{S}^{(1)} & \mathbb{S}^{(0)} & \cdots & \textbf{0}\\
\vdots & \vdots & \ddots & \vdots\\
\mathbb{S}^{(N_{\Delta t}-1)} & \mathbb{S}^{(N_{\Delta t}-2)} & \cdots & \mathbb{S}^{(0)}
\end{array}\right)
\left(
\begin{array}{c}
\textbf{X}_{(0)}\\
\textbf{X}_{(1)}\\
\vdots\\
\textbf{X}_{(N_{\Delta t}-1)}
\end{array}
\right)=
\left(
\begin{array}{c}
\widetilde{\textbf{F}}_{(0)}\\
\widetilde{\textbf{F}}_{(1)}\\
\vdots\\
\widetilde{\textbf{F}}_{(N_{\Delta t}-1)}
\end{array}
\right),
\end{equation}
where, for all time indices $\ell=0,...,N_{\Delta t}-1$, the unknown vectors are structured as
$$\textbf{X}_{(\ell)}=(\pmb{\Psi}_{(\ell)},\textbf{U}_{(\ell)})=\left(\psi_{1,\ell,1},\ldots,\psi_{1,\ell,M_h^{(\pmb{\psi})}},\psi_{2,\ell,1},\ldots,\psi_{2,\ell,M_h^{(\pmb{\psi})}},u_{1,\ell,1},\ldots,u_{1,\ell,M_h^{(\textbf{u})}},u_{2,\ell,1},\ldots,u_{2,\ell,M_h^{(\textbf{u})}}\right)^\top\,$$
and the right-hand side as
$$
\widetilde{\textbf{F}}_{(\ell)}=\left(0,\cdots,0,\,0,\cdots,0,\,\widetilde{F}_{1,\ell,1},\ldots,\widetilde{F}_{1,\ell,M_h^{(\textbf{u})}},\widetilde{F}_{2,\ell,1},\ldots,\widetilde{F}_{2,\ell,M_h^{(\textbf{u})}}\right)^\top 
\in \mathbb{R}^{2(M_h^{(\pmb{\psi})}+M_h^{(\textbf{u})})}\,,$$
with $\widetilde{F}_{i,\ell,m}:=\left\langle\widetilde{f}_i,w^{(\textbf{u})}_m\dot{r}_\ell\right\rangle_{0,\Gamma_\Sigma,(0,T]}$.\\
The matrix $\mathbb{S}$ shows a lower triangular Toeplitz structure, so that the system \eqref{system} can be solved by backsubstitution. This leads to a  \textit{marching-on-in-time} time stepping scheme, in which only the inversion of the block $\mathbb{S}^{(0)}$ is required at every time step.
The structure of blocks $\mathbb{S}^{(\ell)}$, of order $2(M_h^{(\pmb{\psi})}+M_h^{(\pmb{\textbf{u}})})\times 2(M_h^{(\pmb{\psi})}+M_h^{(\pmb{\textbf{u}})})$, is
$$\mathbb{S}^{(0)}=\left(\begin{array}{cc}
-\mathbb{V}^{(0)} & \mathbb{K}^{(0)}+\frac{1}{2}\:\mathbb{M}\\[4pt]
({\mathbb{K}}^{(0)}+\frac{1}{2}\:\mathbb{M})^\top & -\mathbb{W}^{(0)}
\end{array}\right),
\quad
\mathbb{S}^{(\ell)}=\left(\begin{array}{cc}
-\mathbb{V}^{(\ell)} & \mathbb{K}^{(\ell)}\\[4pt]
{\mathbb{K}^{(\ell)}}^\top & -\mathbb{W}^{(\ell)}
\end{array}\right),\:\ell=1,...,N_{\Delta t}-1.
$$
With obvious meaning of notation, the blocks $\mathbb{V}^{(\ell)}$, $\mathbb{K}^{(\ell)}$ and $\mathbb{W}^{(\ell)}$ are obtained from the discretization of the integral operators involved in \eqref{energetic_formh} and $\mathbb{M}$ is a mass matrix. For details about the numerical evaluation the entries of these blocks the reader is referred to \cite{thesis_Giulia, AimiJCAM}.\\
 With similar arguments, using the non-symmetric formulation of the Poincar\'{e}-Steklov operator, we obtain the following system of weak boundary integral equations:\\

\textit{find} $\textbf{u}_{h,\Delta t}\in (X^0_{h,\Gamma_\Sigma}\otimes V^0_{\Delta t})^2$ \textit{and} $\pmb{\psi}_{h,\Delta t}\in (X^{-1}_{h,\Gamma}\otimes V^{-1}_{\Delta t})^2$ \textit{such that}
\begin{equation}\label{energetic_form_non_symmh}
\begin{array}{l}
-\left\langle \mathcal{V}\pmb{\psi}_{h,\Delta t},\dot{\pmb{\eta}}_{h,\Delta t}\right\rangle_{0,\Gamma,(0,T]} +\left\langle\left(\mathcal{K}+\frac{1}{2}\right)\textbf{u}_{h,\Delta t},\dot{\pmb{\eta}}_{h,\Delta t}\right\rangle_{0,\Gamma,(0,T]}=0, \quad \forall\,\pmb{\eta}_{h,\Delta t}\in(X^{-1}_{h,\Gamma}\otimes V^{-1}_{\Delta t})^2,\\[4pt]
\left\langle\pmb{\psi}_{h,\Delta t}, \dot{\textbf{v}}_{h,\Delta t}\right\rangle_{0,\Gamma_\Sigma,(0,T]}=\left\langle\widetilde{\textbf{f}},\dot{\textbf{v}}_{h,\Delta t}\right\rangle_{0,\Gamma_\Sigma,(0,T]}, \quad \quad \quad \quad \quad \quad \quad \quad \forall\,\textbf{v}_{h,\Delta t}\in (X^0_{h,\Gamma_\Sigma}\otimes V^0_{\Delta t})^2.
\end{array}
\end{equation}

The discrete unknowns $\textbf{u}_{h,\Delta},\pmb{\psi}_{h,\Delta}$ are substituted again with \eqref{approx_of_psi_and_u}, leading to an algebraic form $\mathbb{S}\textbf{X}=\widetilde{\textbf{F}}$ of \eqref{energetic_form_non_symmh}, analogous to the system \eqref{system}, with the only difference that the blocks $\mathbb{S}^{(\ell)}$, again of order $2(M_h^{(\pmb{\psi})}+M_h^{(\pmb{\textbf{u}})})\times 2(M_h^{(\pmb{\psi})}+M_h^{(\pmb{\textbf{u}})}),$ are structured as
$$\mathbb{S}^{(0)}=\left(\begin{array}{cc}
-\mathbb{V}^{(0)} & \mathbb{K}^{(0)}+\frac{1}{2}\:\mathbb{M}\\[4pt]
\mathbb{M}^\top & \textbf{0}
\end{array}\right),
\quad
\mathbb{S}^{(\ell)}=\left(\begin{array}{cc}
-\mathbb{V}^{(\ell)} & \mathbb{K}^{(\ell)}\\[4pt]
\textbf{0} & \textbf{0}
\end{array}\right),\:\ell=1,...,N_{\Delta t}-1.
$$

\subsection{Algebraic formulation  of the Uzawa algorithm for unilateral frictional contact}
In the numerical solution of contact problems, the right-hand side $\widetilde{\textbf{f}}$ in \eqref{energetic_formh} and \eqref{energetic_form_non_symmh} specifies depending on the Neumann datum $\textbf{f}$ and the discretized Lagrange multiplier $\pmb{\lambda}_{H,\Delta T}(\textbf{x},t)$ as in \eqref{eq:WeakMixedVarEqh}.\\
Hence, let us consider the vector ${\bf F}=\left({\bf F}_{(\ell)}\right)_{\ell=0}^{N_{\Delta t}-1}$ with
$$
\textbf{F}_{(\ell)}=\left(0,\cdots,0,\,0,\cdots,0,\,F_{1,\ell,1},\ldots,F_{1,\ell,M_h^{(\textbf{u})}},F_{2,\ell,1},\ldots,F_{2,\ell,M_h^{(\textbf{u})}}\right)^\top
\in \mathbb{R}^{2(M_h^{(\pmb{\psi})}+M_h^{(\textbf{u})})}$$
and $F_{i,\ell,m}:=\left\langle f_i,w^{(\textbf{u})}_m\dot{r}_\ell\right\rangle_{0,\Gamma_\Sigma,(0,T]}$.\\ 
For the discretized Lagrange multiplier,
in practice we choose $H=h$ and $\Delta T = \Delta t$, therefore its Cartesian components can be expressed as
\begin{equation}\label{approx_of_lambda}
\lambda_{i,h,\Delta t}(t,\textbf{x})=\sum_{\ell=0}^{N_{\Delta t}-1}\sum_{m=1}^{M^{(\pmb{\lambda})}_h}\lambda_{i,\ell,m}w^{(\pmb{\lambda})}_m(\textbf{x})v_\ell(t),\; i=1,2,
\end{equation}
with $\left\lbrace w^{(\pmb{\lambda})}_m \right\rbrace_{m=1}^{M^{(\pmb{\lambda})}_h}$ the piecewise constant basis functions of $X^{-1}_{h,\Gamma_C}$. 
For each time step $\ell=0,...,N_{\Delta t}-1$, we introduce the vectors\\ 
$$\pmb{\Lambda}_{(\ell)}=\left(\lambda_{1,\ell,1},\ldots,\lambda_{1,\ell,M_h^{(\pmb{\lambda})}},\lambda_{2,\ell,1},\ldots,\lambda_{2,\ell,M_h^{(\pmb{\lambda})}}\right)^\top$$

which are collected in the vector $\pmb{\Lambda}=\left(\pmb{\Lambda}_{(\ell)}\right)_{\ell=0}^{N_{\Delta t}-1}\in \mathbb{R}^{N_{\Delta t}2M_h^{(\pmb{\lambda})}}$.
We denote by $\mathcal{J}_\perp$ the set of those indices $j$ of the vector $\pmb{\Lambda}$ which correspond to the components of $\pmb{\lambda}_{h,\Delta t}$ normal to $\Gamma$, and by $\mathcal{J}_\parallel$ the set of the remaining indices, 
corresponding to the components  of $\pmb{\lambda}_{h,\Delta t}$ tangential to $\Gamma$. Moreover, let $\mathcal{F}_j,\,j \in \mathcal{J}_\parallel$, denote the minimum of $\mathcal{F}$ on the element of the boundary mesh where the tangential component of $\pmb{\lambda}_{h,\Delta t}$ is being considered through the corresponding index $j \in \mathcal{J}_\parallel$ of $\pmb{\Lambda}$ vector.\\
The Uzawa algorithm for Tresca friction given in Algorithm \ref{alg1} then translates into the following algebraic procedure: 
\begin{algorithm}[H]
\caption{(Algebraic formulation of Uzawa algorithm for unilateral frictional contact)}
\label{alg2}
\begin{algorithmic}
\STATE Fix $\rho>0$ and $\epsilon>0$.
\STATE $k=0$,  $\pmb{\Lambda}^{(0)}= \textbf{0}$ and $\pmb{\Lambda}^{(-1)}=\textbf{1}$
\WHILE{$\Vert\pmb{\Lambda}^{(k)}-\pmb{\Lambda}^{(k-1)} \Vert_2/\Vert\pmb{\Lambda}^{(k)} \Vert_2> \epsilon$}
\STATE \textbf{solve} $\quad$ $\mathbb{S}\textbf{X}^{(k)}=\textbf{F}+\mathbb{M}^*\pmb{\Lambda}^{(k)}$
\STATE \textbf{extract}$\quad$
$\textbf{U}^{(k)}$ from $\textbf{X}^{(k)}$
\STATE \textbf{compute} $\quad$ $\pmb{\Lambda}^{(k+1)}= \text{pr}_C  (\pmb{\Lambda}^{(k)}  - \rho \widetilde{\mathbb{M}}(\textbf{U}^{(k)}-\textbf{G}))$ 
\STATE $k \leftarrow k+1$
\ENDWHILE
\end{algorithmic}
\end{algorithm}
where the stopping criterion is specified and $\text{pr}_C: \mathbb{R}^{N_{\Delta t}2M_h^{(\pmb{\lambda})}}\rightarrow \mathbb{R}^{N_{\Delta t}2M_h^{(\pmb{\lambda})}}$ is understood as the discretized version of the projector considered in Algorithm \ref{alg1}, acting on a 
vector ${\bf W}$ as 
\begin{equation}
(\text{pr}_C\ {\bf W})_j= \left\{
\begin{array}{lr}
\max \,\{W_j,0\}, & j \in \mathcal{J}_\perp\\
W_j,& |W_j|\leq\mathcal{F}_j \text{ and } j\in \mathcal{J}_\parallel\\
\mathcal{F}_j \frac{W_j}{|W_j|},& |W_j|>\mathcal{F}_j \text{ and } j\in \mathcal{J}_\parallel
\end{array}\,.
\right.
\end{equation}
Moreover, the matrix $\mathbb{M}^*\in \mathbb{R}^{N_{\Delta t}\,2(M_h^{(\pmb{\psi})}+M_h^{(\textbf{u})})\times N_{\Delta t}2M_h^{(\pmb{\lambda})}}$ represents the projection of discretized Lagrange multipliers on the discretized displacement space, trivially extended in such a way that the vector  $\mathbb{M}^*\pmb{\Lambda}^{(k)}$ matches the length of the vector ${\bf F}$, while
the vector $\textbf{G}$ contains the coefficients of the interpolant of $g$ in $X^0_{h,\Gamma_C}\otimes V^0_{\Delta t}$, suitably trivially extended to match the length of vector $\textbf{U}^{(k)}$. Finally, 
$\widetilde{\mathbb{M}} \in \mathbb{R}^ {N_{\Delta t}2M_h^{(\pmb{\lambda})}\times N_{\Delta t}2M_h^{(\textbf{u})}}$ is the mass matrix representing the interplay between the finite dimensional spaces of discretized Lagrange multipliers and discretized displacements, and taking also into account the time derivative with respect to the tangential components. 

For Coulomb friction, as discussed in Section \ref{sec:coulomb} the friction threshold $\mathcal{F}_j$ is replaced by $\mathcal{F}_{c,j} \Lambda_{j_\perp}$, where for $j \in \mathcal{J}_\parallel$ the index $j_\perp \in \mathcal{J}_\perp$ denotes the index of $\pmb{\Lambda}$ vector related to the normal component of $\pmb{\lambda}_{h, \Delta t}$  in the same element of the boundary mesh.

\subsection{Extension to bilateral contact}

In order to keep the notation as simple and clear as possible and in view of the numerical test presented in Section \ref{sec;numres}, we consider a simplified geometry for the two-body contact, where $\Omega_1$ is completely embedded in the unbounded complement $\Omega_2 = \mathbb{R}^2\setminus \overline{\Omega_1}$, so that the only existing boundary is the interface $\Gamma=\Gamma_\Sigma=\Gamma_\Sigma'$ between the two materials, subdivided into the two subsets $\Gamma_C$ and $\Gamma_I$.\\
To numerically solve the bilateral frictional contact problem,
we recall that the Poincar\'{e}-Steklov operators ${\cal S}_j,\,j=1,2$, defined as in 
\eqref{operator_S} for each of the two domains $\Omega_j,\,j=1,2$, can be represented in symmetric or non-symmetric form, as described in Section \ref{birs}, using the boundary integral operators ${\cal V}_j, {\cal K}_j, {\cal K}_j^*, {\cal W}_j,\, j=1,2$. The quartet of boundary integral operators depends on the material parameters in $\Omega_j$ through the corresponding fundamental solution.\\ With reference to discrete equation \eqref{eq:WeakMixedVarEq2bh}, we need to pose \eqref{energetic_formh} and \eqref{energetic_form_non_symmh} in both $\Omega_j,\,j=1,2$ and properly combine them.
For the two-body contact model problem taken into account, the unknowns on the boundary are the auxiliary variables $\pmb{\psi}_{j,h,\Delta t}:=\mathcal{V}_j^{-1}\left(\mathcal{K}_j+\frac{1}{2}\right)\textbf{u}_{j,h,\Delta t}$, $j=1,2$, both belonging to the discrete space $(X^{-1}_{h,\Gamma}\otimes V^{-1}_{\Delta t})^2$, the displacement field $\textbf{u}_{h,\Delta t}:=\textbf{u}_{1,h,\Delta t}$ belonging to the discrete space $(X^0_{h,\Gamma_\Sigma}\otimes V^0_{\Delta t})^2$ and finally the displacement gap $\widetilde{\textbf{u}}_{h,\Delta t}:=\textbf{u}_{2,h,\Delta t}-\textbf{u}_{1,h,\Delta t}$ belonging to the discrete space $(X^0_{h,\Gamma_C}\otimes V^0_{\Delta t})^2$.
The first two spaces were introduced in Section \ref{subPS}, while the last one consists of vector-valued functions whose spatial components are spanned by the set $\left\lbrace w^{(\widetilde{\textbf{u}})}_m \right\rbrace_{m=1}^{M^{(\widetilde{\textbf{u}})}_h}$, the restrictions  from $\Gamma_\Sigma$ to $\Gamma_C$ of piecewise linear, continuous basis functions .\\ The resulting linear system, to be solved in each step of the Uzawa iteration, involves a matrix with the same structure as in \eqref{system}.\\
Following the symmetric approach for the Poincar\'{e}-Steklov operator, the structure of the blocks $\mathbb{S}^{(\ell)}$, each of size $2(2M_h^{(\pmb{\psi})}+M_h^{(\textbf{u})}+M_h^{(\widetilde{\textbf{u}})})\times 2(2M_h^{(\pmb{\psi})}+M_h^{(\textbf{u})}+M_h^{(\widetilde{\textbf{u}})})$, is 
$$\mathbb{S}^{(0)}=\left(\begin{array}{cccc}
-\mathbb{V}_1^{(0)} & \mathbb{K}_1^{(0)}+\frac{1}{2}\:\mathbb{M} & {\bf 0} & {\bf 0}\\[4pt]
({\mathbb{K}}_1^{(0)}+\frac{1}{2}\:\mathbb{M})^\top & -\mathbb{W}_1^{(0)}-\mathbb{W}_2^{(0)} & (-{\mathbb{K}}_2^{(0)}+\frac{1}{2}\:\mathbb{M})^\top &  \mathbb{W}_{2,|}^{(0)}\\[4pt]
{\bf 0} & -{\mathbb{K}}_2^{(0)}+\frac{1}{2}\:\mathbb{M}  & -\mathbb{V}_2^{(0)} & {\mathbb{K}}_{2,|}^{(0)}- \frac{1}{2}\:\mathbb{M}_|\\[4pt]
{\bf 0} & {\mathbb{W}_{2,|}^{(0)}}^\top & {({\mathbb{K}}_{2,|}^{(0)}- \frac{1}{2}\:\mathbb{M}_|)}^\top & -\mathbb{W}_{2,||}^{(0)}
\end{array}\right),
$$
$$
\mathbb{S}^{(\ell)}=\left(\begin{array}{cccc}
-\mathbb{V}_1^{(\ell)} & \mathbb{K}_1^{(\ell)} & {\bf 0} & {\bf 0}\\[4pt]
{{\mathbb{K}}_1^{(\ell)}}^\top & -\mathbb{W}_1^{(\ell)}-\mathbb{W}_2^{(\ell)} & -{{\mathbb{K}}_2^{(\ell)}}^\top & \mathbb{W}_{2,|}^{(\ell)}\\[4pt]
{\bf 0} & -{\mathbb{K}}_2^{(\ell)} & -\mathbb{V}_2^{(\ell)} & {\mathbb{K}}_{2,|}^{(\ell)}\\[4pt]
{\bf 0} & {\mathbb{W}_{2,|}^{(\ell)}}^\top & {{\mathbb{K}}_{2,|}^{(\ell)}}^\top & -\mathbb{W}_{2,||}^{(\ell)}
\end{array}\right),\:\ell=1,...,N_{\Delta t}-1.
$$
With obvious meaning of notation, the blocks $\mathbb{V}_j^{(\ell)}$, $\mathbb{K}_j^{(\ell)}$ and $\mathbb{W}_j^{(\ell)}$, for $j=1,2$, are obtained from the discretization of the boundary integral operators related to each of the two domains. Moreover, the symbol $_|$ denotes the restriction by columns of the involved boundary integral operator block or of the mass matrix from $\Gamma_\Sigma$ to $\Gamma_C$, while the symbol $_{||}$ denotes the same restriction both by rows and columns.\\
For the alternative, non-symmetric formulation for the Poincar\'{e}-Steklov operator the blocks of the matrix $\mathbb{S}$ are still of size $2(2M_h^{(\pmb{\psi})}+M_h^{(\textbf{u})}+M_h^{(\widetilde{\textbf{u}})})\times 2(2M_h^{(\pmb{\psi})}+M_h^{(\textbf{u})}+M_h^{(\widetilde{\textbf{u}})})$ and assume the following simplified form: 
$$\mathbb{S}^{(0)}=\left(\begin{array}{cccc}
-\mathbb{V}_1^{(0)} & \mathbb{K}_1^{(0)}+\frac{1}{2}\:\mathbb{M} & {\bf 0} & {\bf 0}\\[4pt]
\mathbb{M}^\top & {\bf 0} & \mathbb{M}^\top & {\bf 0}\\[4pt]
{\bf 0} & -{\mathbb{K}}_2^{(0)}+\frac{1}{2}\:\mathbb{M}  & -\mathbb{V}_2^{(0)} & {\mathbb{K}}_{2,|}^{(0)} -\frac{1}{2}\:\mathbb{M}_|\\[4pt]
{\bf 0} & {\bf 0} &- \mathbb{M}_|^\top & {\bf 0}
\end{array}\right),
$$
$$
\mathbb{S}^{(\ell)}=\left(\begin{array}{cccc}
-\mathbb{V}_1^{(\ell)} & \mathbb{K}_1^{(\ell)} & {\bf 0} & {\bf 0}\\[4pt]
{\bf 0} & {\bf 0} &{\bf 0} &{\bf 0}\\[4pt]
{\bf 0} & -{\mathbb{K}}_2^{(\ell)} & -\mathbb{V}_2^{(\ell)} & {\mathbb{K}}_{2,|}^{(\ell)}\\[4pt]
{\bf 0} & {\bf 0} &{\bf 0} &{\bf 0}\\[4pt]
\end{array}\right),\:\ell=1,...,N_{\Delta t}-1.
$$
Finally, the extension of the Uzawa Algorithm 2 to the bilateral contact problem is given as follows, with the canonical meaning of the notation:
\begin{algorithm}[H]
\caption{(Algebraic formulation of Uzawa algorithm for bilateral frictional contact)}
\label{alg3}
\begin{algorithmic}
\STATE Fix $\rho>0$ and $\epsilon>0$.
\STATE $k=0$,  $\pmb{\Lambda}^{(0)}= \textbf{0}$ and $\pmb{\Lambda}^{(-1)}=\textbf{1}$
\WHILE{$\Vert\pmb{\Lambda}^{(k)}-\pmb{\Lambda}^{(k-1)} \Vert_2/\Vert\pmb{\Lambda}^{(k)} \Vert_2> \epsilon$}
\STATE \textbf{solve} $\quad$ $\mathbb{S}\textbf{X}^{(k)}=\textbf{F}+\mathbb{M}^*\pmb{\Lambda}^{(k)}$
\STATE \textbf{extract}$\quad$
$\widetilde{\textbf{U}}^{(k)}$ from $\textbf{X}^{(k)}$
\STATE \textbf{extend}$ \quad$
$\widetilde{\textbf{U}}^{(k)}$ trivially from 
$\mathbb{R}^{N_{\Delta t}2M_h^{(\widetilde{\textbf{u}})}}$ to $ \mathbb{R}^{N_{\Delta t}2M_h^{(\textbf{u})}}$
\STATE \textbf{compute} $\quad$ $\pmb{\Lambda}^{(k+1)}= \text{pr}_C  (\pmb{\Lambda}^{(k)}  - \rho \widetilde{\mathbb{M}}(\widetilde{\textbf{U}}^{(k)}-{\textbf{G}}))$ 
\STATE $k \leftarrow k+1$
\ENDWHILE
\end{algorithmic}
\end{algorithm}

\section{Numerical results}\label{sec;numres}

In this section we present numerical results involving polygonal and curved structures related to both unilateral and two-body frictional contact. Beyond test cases for generic parameters, experiments with material parameters corresponding to concrete and steel are considered.

\subsection{Example 1: unilateral frictional contact}

This example compares the evolution of the  displacement with no friction, Tresca friction and Coulomb friction in a model problem. We consider the contact problem with $c_S=0.5$ and $c_P=1$ in the square $\Omega=[-0.5,0.5]^2$ in the time interval $[0,T]=[0,2]$. The bottom, left, top and right  sides are denoted, respectively, by $\Gamma_b, \Gamma_r, \Gamma_t, \Gamma_l$. We set $\Gamma_N=\Gamma_t\cup \Gamma_r$, and define the contact region as $\Gamma_C=\Gamma_b\cup \Gamma_l$. In $\Gamma_C$ we consider a trivial force $\textbf{f}=\pmb{0}$, while in $\Gamma_N$ the force is given by $\textbf{f}=(0,f_2)$, with
$$
f_2(t,\textbf{x})=\left\{ \begin{array}{c l}
-0.1H[t], & \textbf{x}\in\Gamma_t\\
0, & \textbf{x}\in\Gamma_r
\end{array}\right. .
$$

Figure \ref{ex_1_contact_force} 
shows how the force acts on the square, pushing it down from the top. \\
We analyze three test problems, corresponding the following friction thresholds  on $\Gamma_C$: in Test 1 no friction is imposed, i.e.~the friction threshold is given by $\mathcal{F}=0$; in Test 2 we prescribe Tresca friction with $\mathcal{F}=0.05$; in Test 3 we prescribe Coulomb friction with $\mathcal{F}_c=0.5$.\\
The mixed formulation of each contact problem is solved using the Uzawa algorithm with a stopping criterion $\epsilon=10^{-4}$ and update parameter $\rho=10^2$.\\
\begin{figure}[h!]
\centering
\includegraphics[scale=0.6]{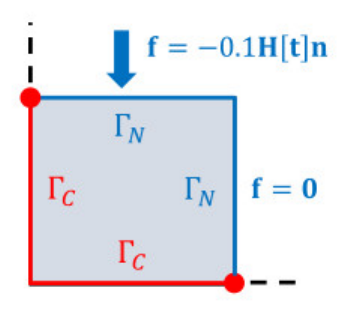}
\caption{Schematic representation of the Neumann datum.}
\label{ex_1_contact_force}
\end{figure}

Using the non-symmetric formulation \eqref{energetic_form_non_symmh} with $h=\Delta t=0.05$, we plot the numerical solution for the displacement in the midpoint of each side of the square in Figure \ref{Test_square}(a) for Test 1, in Figure \ref{Test_square}(b) for Test 2 and in Figure \ref{Test_square}(c) for Test 3.\\
\begin{figure}
\centering
\begin{tabular}{ccc}
\includegraphics[scale=0.33]{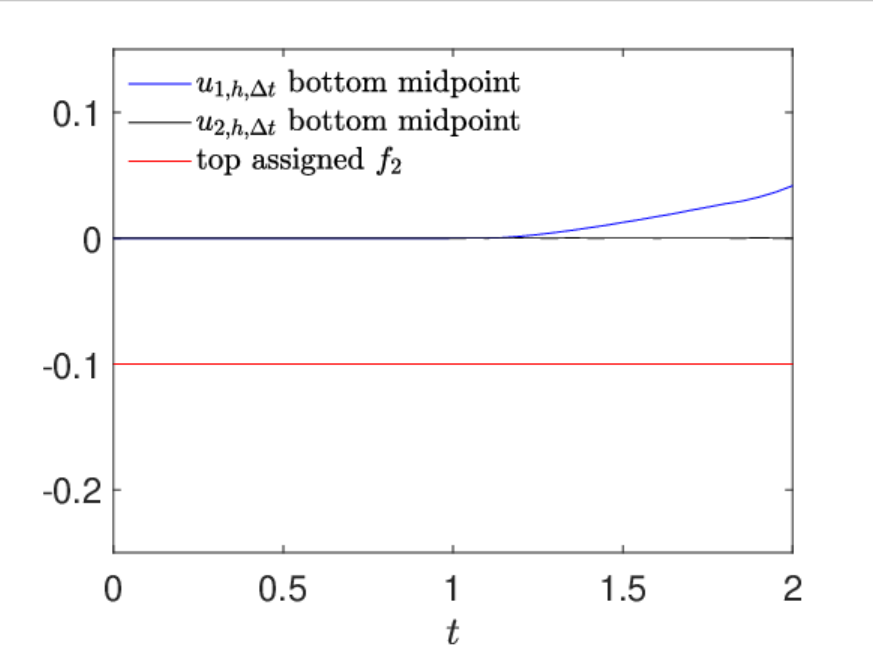} & \includegraphics[scale=0.33]{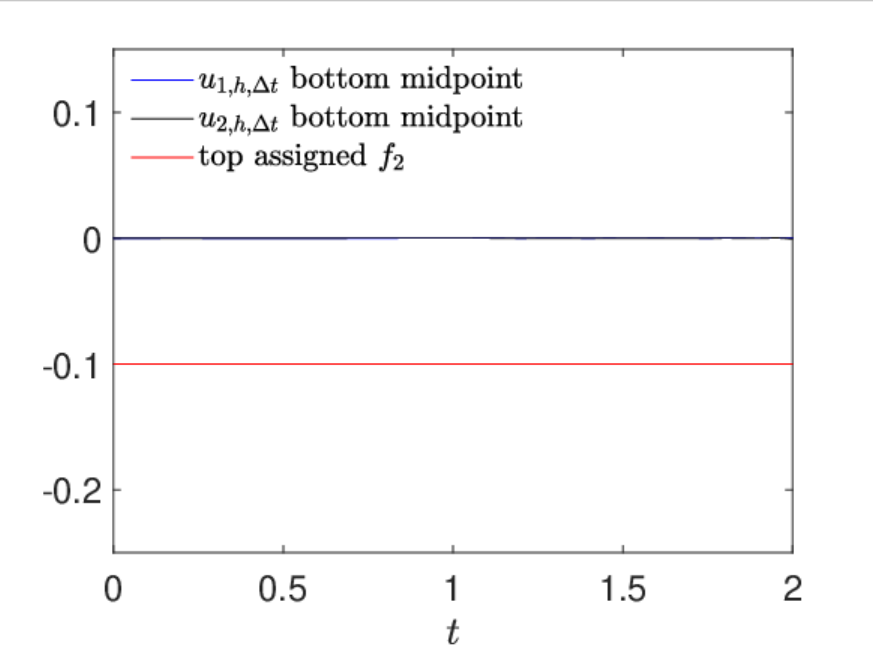} & \includegraphics[scale=0.33]{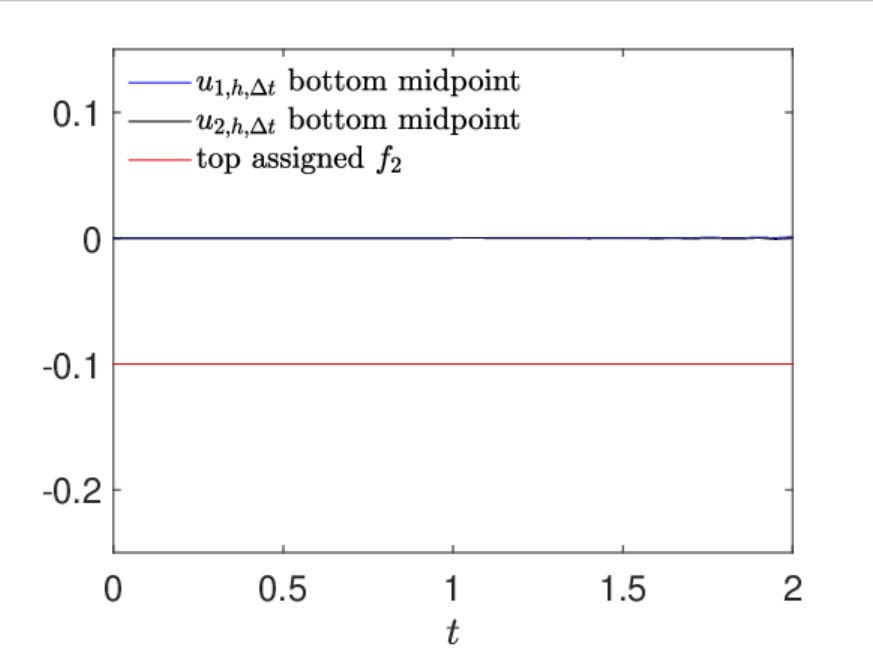} \\
\includegraphics[scale=0.33]{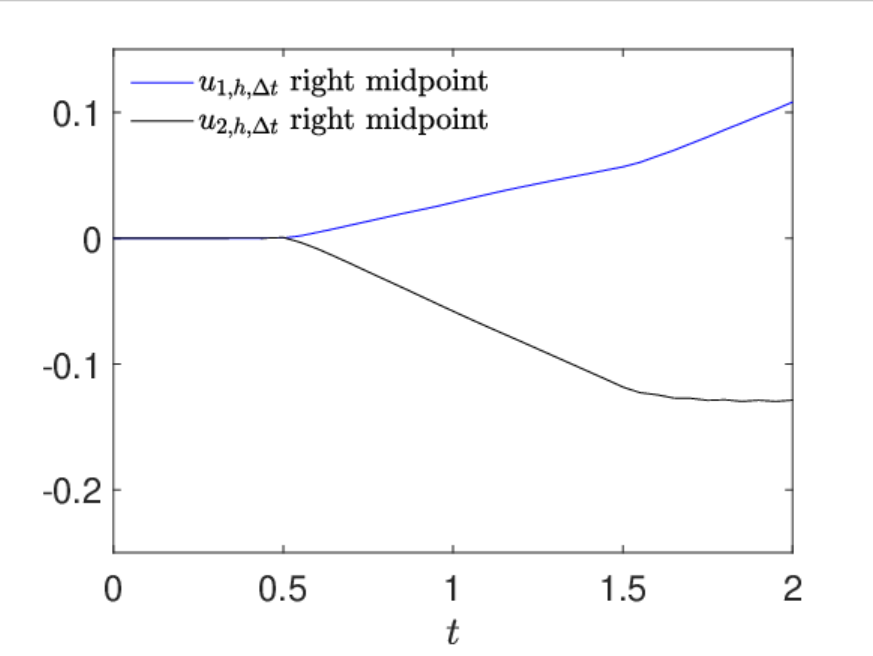} & \includegraphics[scale=0.33]{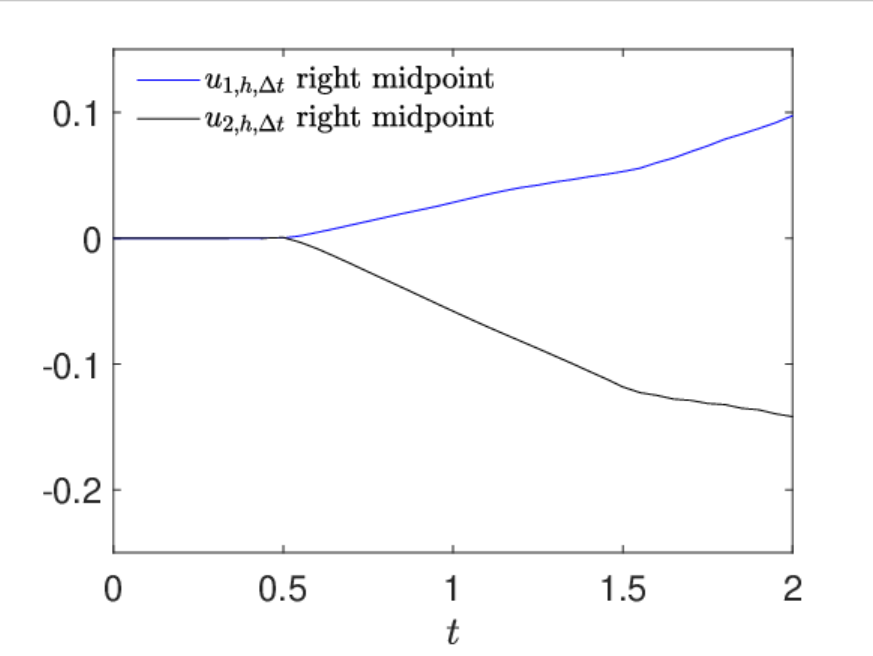} & \includegraphics[scale=0.33]{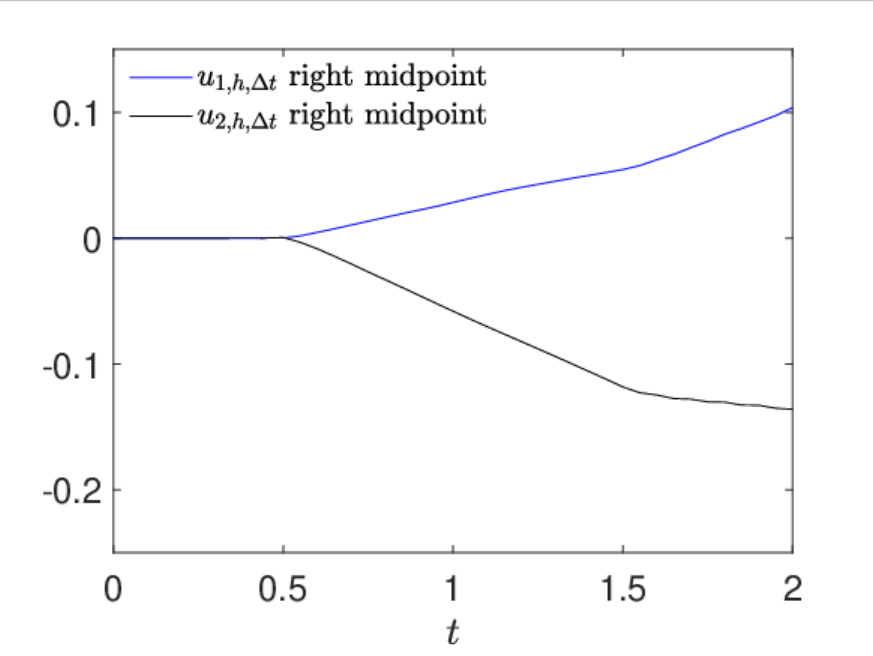} \\
\includegraphics[scale=0.33]{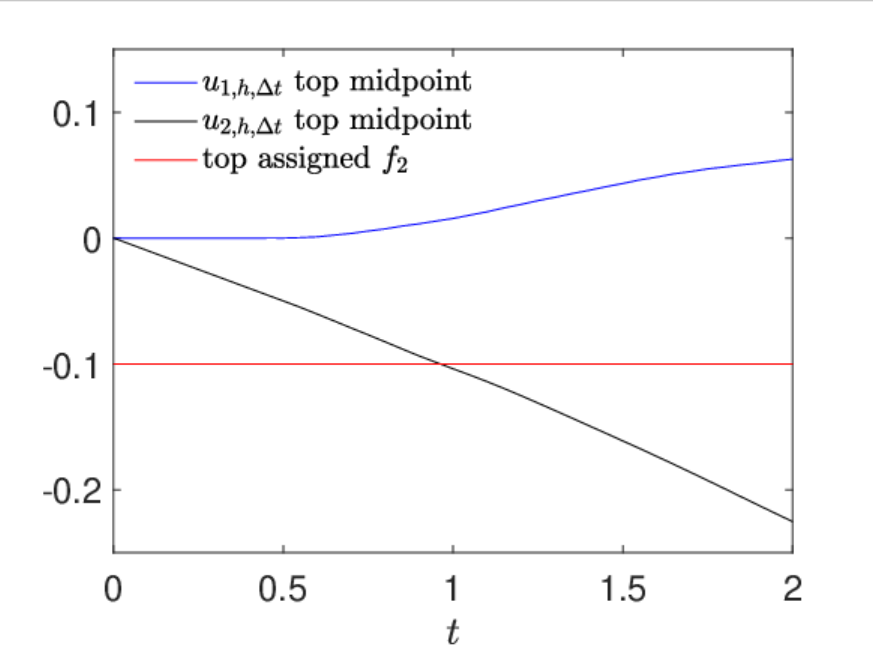} & \includegraphics[scale=0.33]{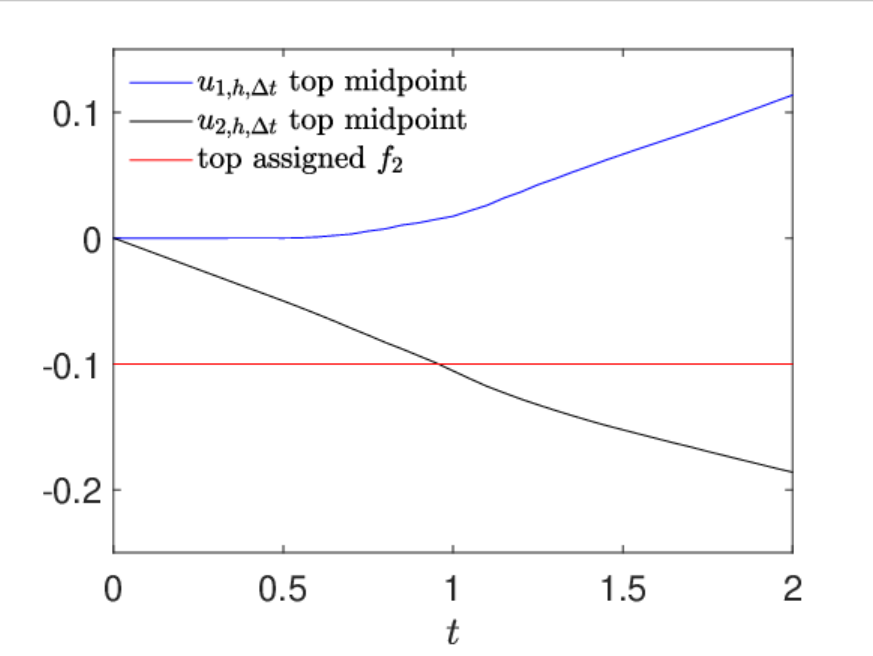} & \includegraphics[scale=0.33]{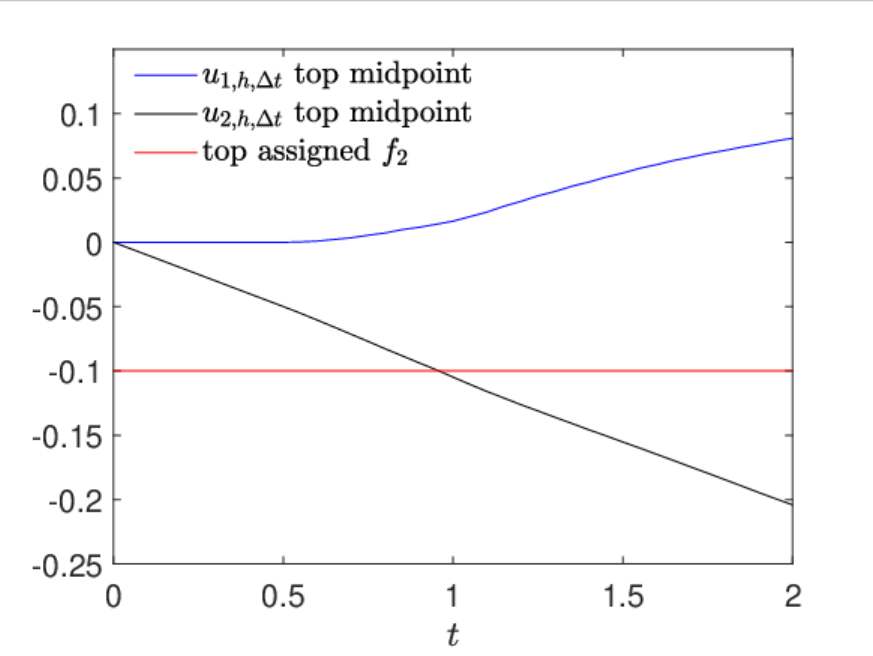} \\
\includegraphics[scale=0.33]{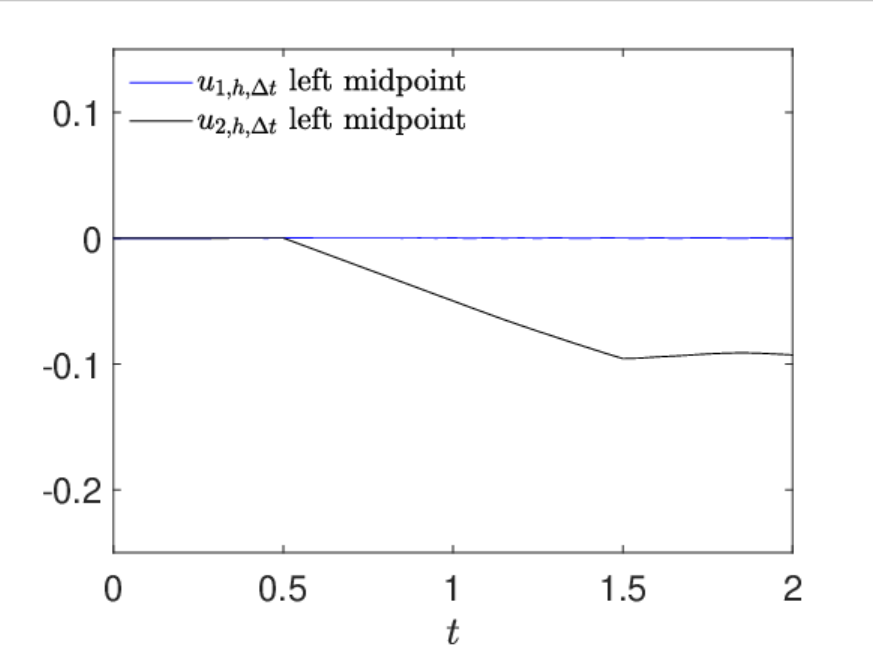} & \includegraphics[scale=0.33]{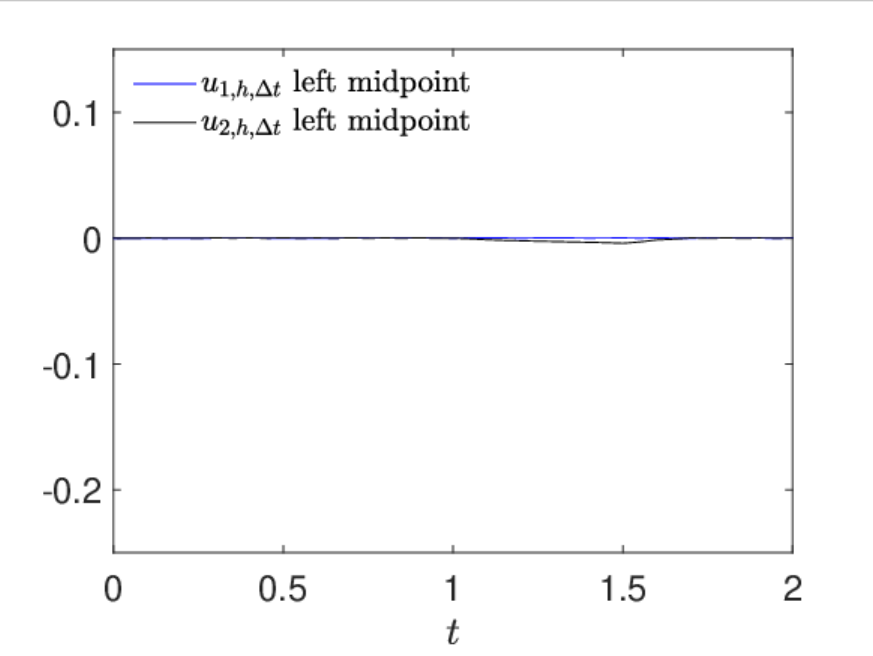} & \includegraphics[scale=0.33]{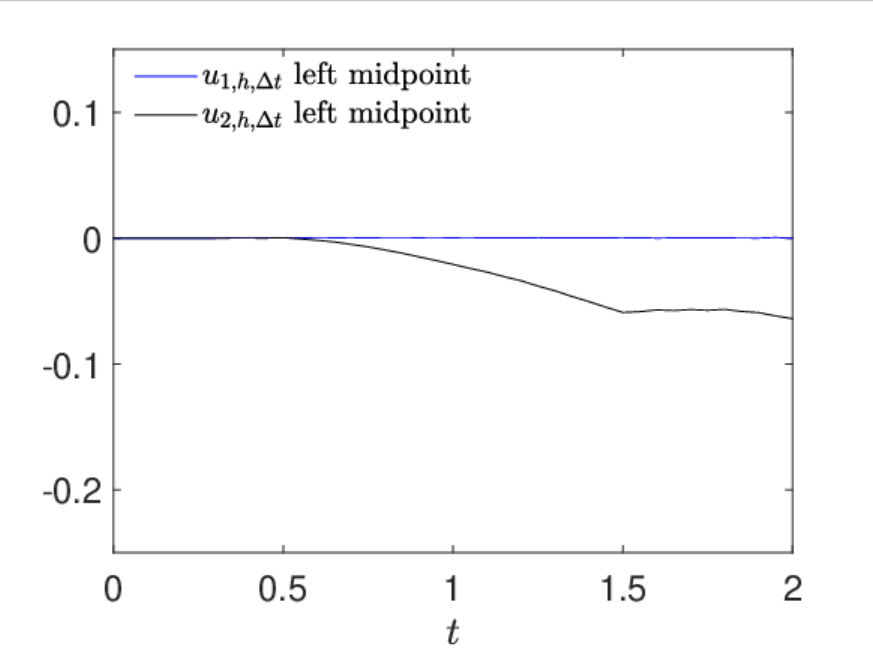} \\
(a)  & (b) & (c)
\end{tabular}
\caption{Time evolution of $\textbf{u}_{h,\Delta t}$ in the midpoint of each side of $\Gamma$ for Test 1 (a, no friction), Test 2 (b, Tresca friction) and  Test 3 (c, Coulomb friction), together with the corresponding vertical Neumann datum.}
\label{Test_square}
\end{figure}
Figures \ref{deformation_without_Coulomb_friction_square}(a), \ref{deformation_without_Coulomb_friction_square}(b) and \ref{deformation_without_Coulomb_friction_square}(c) show the global deformation of the square for Test 1, Test 2 and Test 3, respectively.
These figures show how the top side is pushed down, but tilts in the presence of friction, as the friction on the left wall hinders the downward movement on the left side.\\
For Test 3 with Coulomb friction, we show in Figure \ref{Example1_surface_lambda} the space-time surface over $\Gamma_C\times [0,T]$ of the vertical component of $\pmb{\lambda}$. It clearly shows that the vertical component of $\pmb{\lambda}$ is nonzero already for short times in the top-left corner, and the frictional contact propagates down the left side of the square. At $t=1$ the wave reaches the bottom side of the square and the jump of the vertical component of $\pmb{\lambda}$ corresponds to the nonpenetration condition.
\begin{figure}[h!]
\centering
\subfloat[]{\includegraphics[scale=0.55]{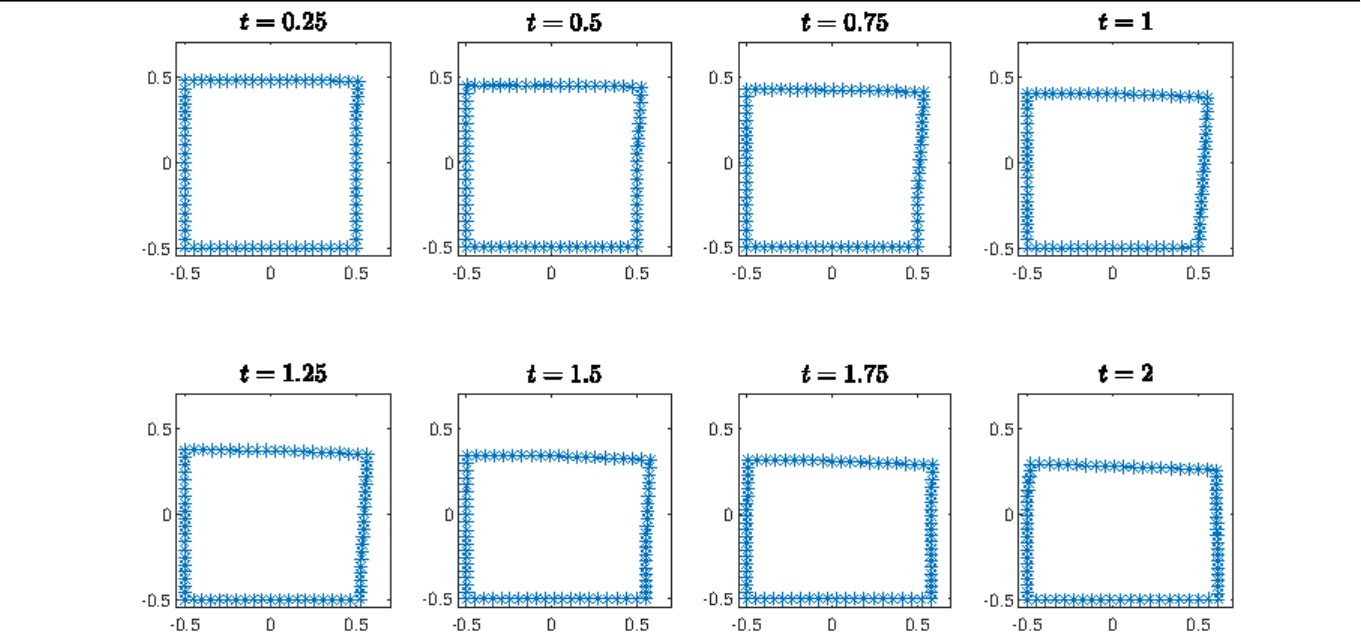}}
\vskip 0.1cm
\subfloat[]{\includegraphics[scale=0.55]{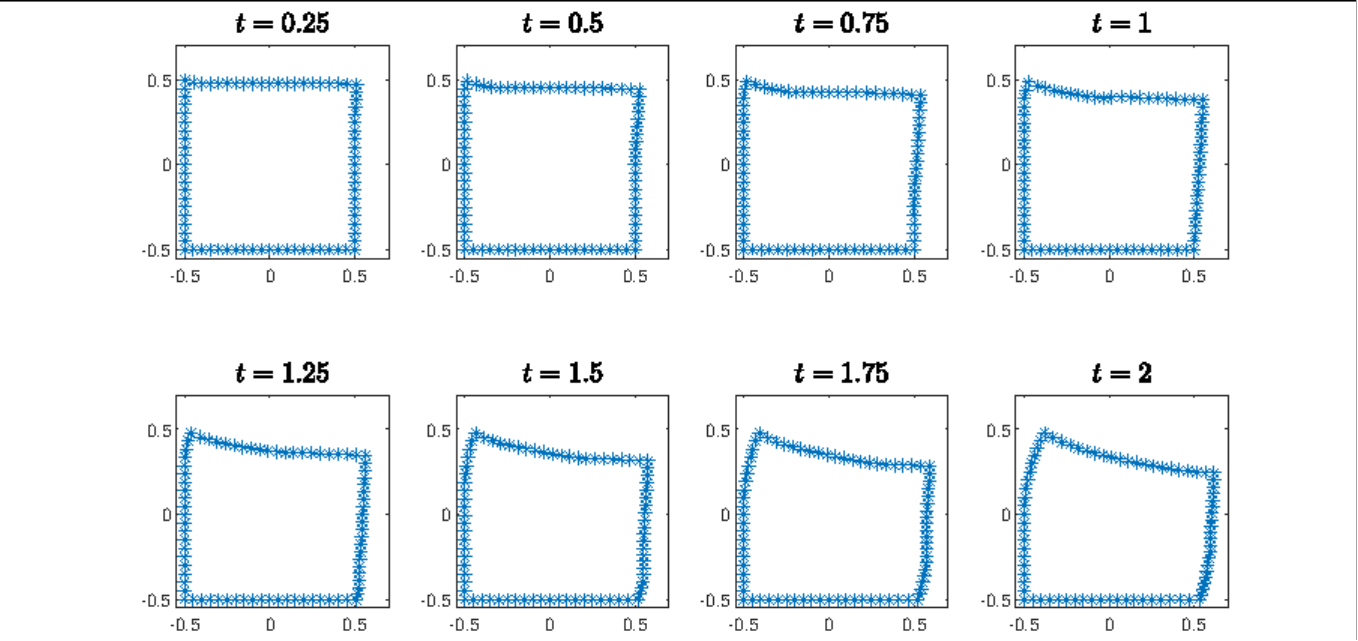}}
\vskip 0.1cm
\subfloat[]{\includegraphics[scale=0.55]{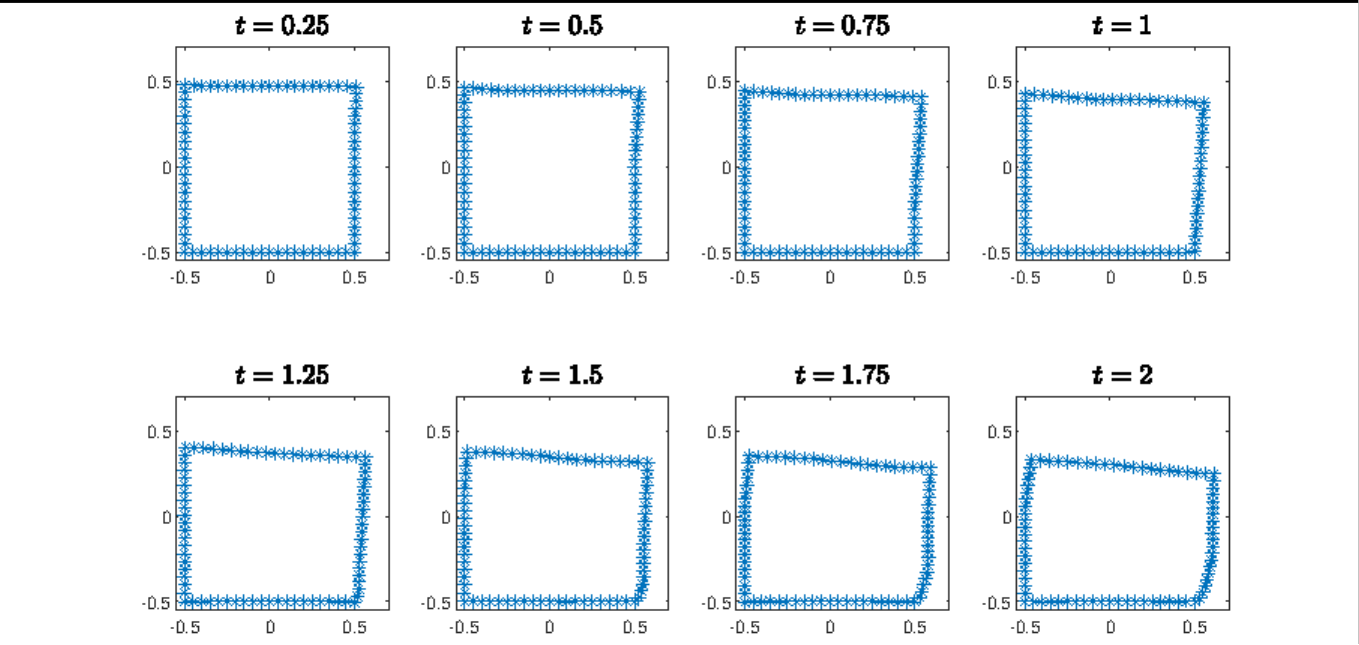}}
\caption{Global deformation of the square at several time instants for Test 1 (a, no friction), Test 2 (b, Tresca friction) and  Test 3 (c, Coulomb friction)}
\label{deformation_without_Coulomb_friction_square}
\end{figure}
\begin{figure}
\centering
\includegraphics[scale=0.58]{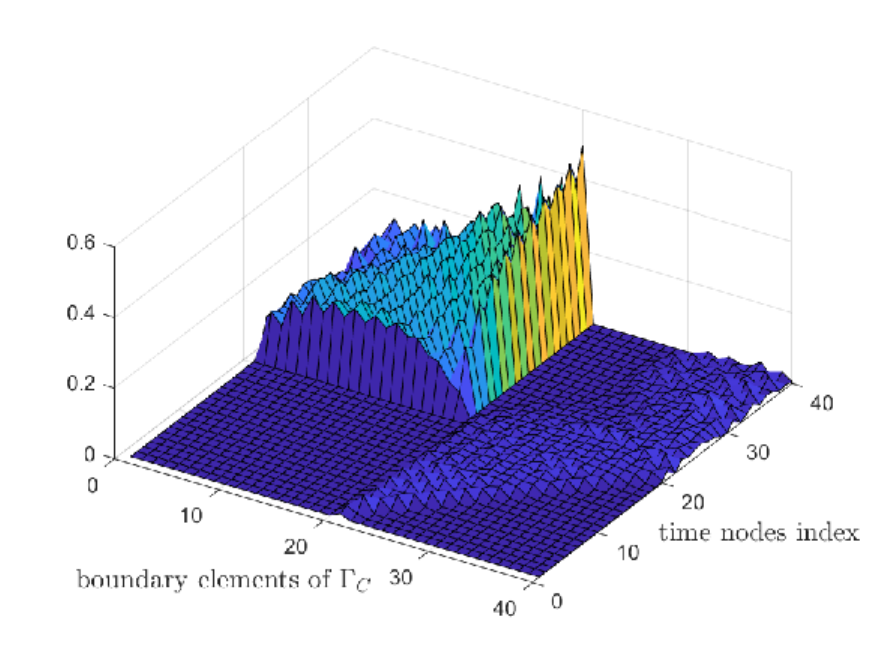}
\caption{Vertical component of $\pmb{\lambda}$ on $(0,T]\times \Gamma_C$,  for Test 3 with Coulomb friction (elements from 1 to 20 are in $\Gamma_b$, from 21 to 40 in $\Gamma_l$). }
\label{Example1_surface_lambda}
\end{figure}
{Finally, Figure \ref{example1_energy_in_time} shows the time evolution of the energy for the three types of contact. The reader can observe a linear increase for short times, before Tresca or Coulomb frictional contact dissipates some of the introduced energy, compared to the case without friction.} {Analogous results, not shown here for the sake of brevity, have been obtained using the symmetric formulation \eqref{energetic_formh}.}

\begin{figure}
\centering
{\includegraphics[scale=0.58]{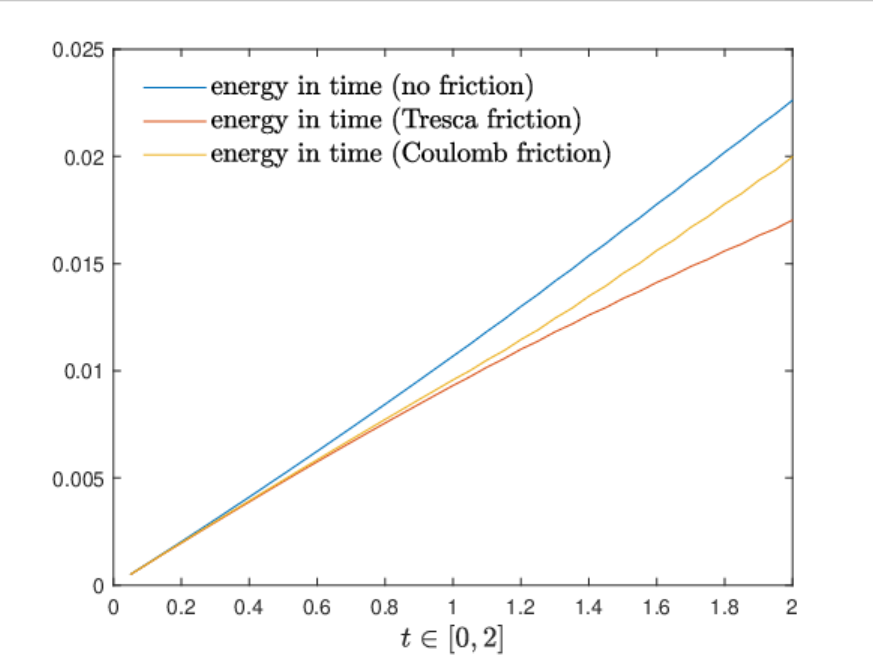}}
\caption{Energy as a function of time for the three types of contact (data obtained for $h=\Delta t=0.05$).}
\label{example1_energy_in_time}
\end{figure}

\subsection{Example 2: unilateral Coulomb frictional contact, {concrete-steel friction}}

The second example considers the same geometry and boundary conditions as in Example 1. The parameters are chosen to correspond to a square of concrete of side length $1\,m$
{in dynamic contact with a rigid wall. The frictional contact is described by the Coulomb friction law, with friction parameter  corresponding to the interaction between concrete and steel.}
For the small deformations and stresses considered here, concrete is well-described by a linearly elastic material with primary and secondary wave speeds given by $c_P=3.253\,\frac{m}{ms}$, $c_S=1.992\,\frac{m}{ms}$ (see \cite{concretenorm}). We impose a slowly increasing, vertical traction on the top side of the square, given by $f_2({\bf x},t)=-4\tanh((\frac{t}{15})^2)\, \frac{kg}{m\, (ms)^2}$, ${\bf x} \in \Gamma_t$. The final time is $T=0.6\,ms$, so that the primary wave can reach the bottom side and travel back to the top side of the square. Following \cite{wrig}, Table 4.1, the Coulomb friction coefficient is given by ${\cal F}_c=0.3$.\\
The mixed formulation of the contact problem is solved using the Uzawa algorithm with a stopping criterion $\epsilon=10^{-6}$ and update parameter $\rho=10^5$.\\
Figure \ref{concrete_steel} displays the square of the relative energy error in terms of the mesh size $h$, showing faster than linear convergence {for both symmetric and non-symmetric formulations.}\\
Figure \ref{Example2_surface_displecement} depicts the horizontal and vertical components of the approximate displacement field as a function of space and time, for elements on the boundary and for times $[0,0.6]$. Due to the physical properties of concrete and the mild traction applied on $\Gamma_t$, the obtained displacement is orders of magnitude smaller than the size of the concrete body. Focusing on the vertical component, we observe that the maximum deformation occurs, in time, at the top-right corner (21st boundary element), free from the effects of the friction which influences the bottom and left sides.\\ Figure \ref{Example2_energy_in_time} shows the evolution of the energy with time in a log-log plot. The linear increase in this plot hence corresponds to a polynomial increase of the energy as a function of time, mirroring the temporal behavior of the imposed Neumann datum.

\begin{figure}
\centering
\includegraphics[scale=0.55]{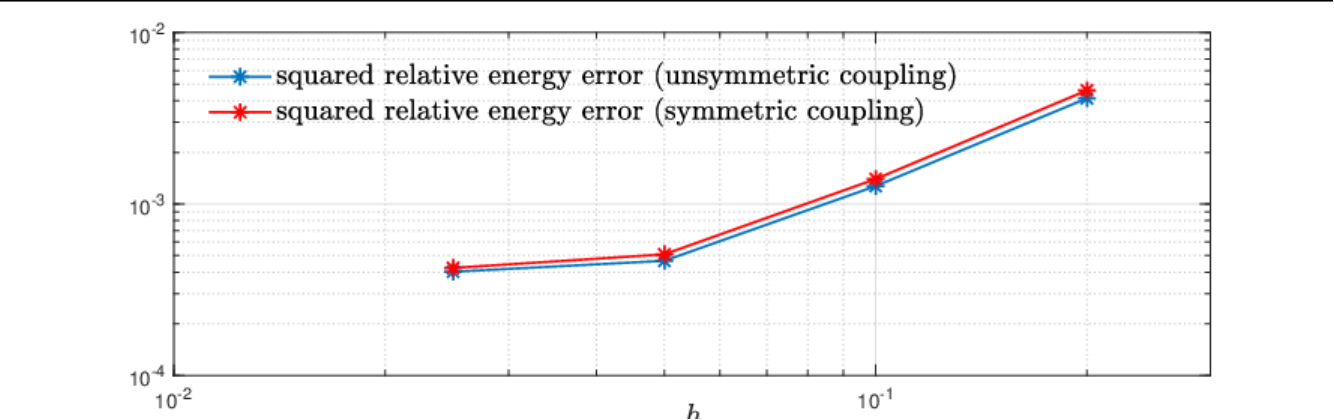}
\caption{Squared relative energy error for Example 2 (concrete-steel), depending on the space-time discretization parameters.}
\label{concrete_steel}
\end{figure}

\begin{figure}
\includegraphics[scale=0.53]{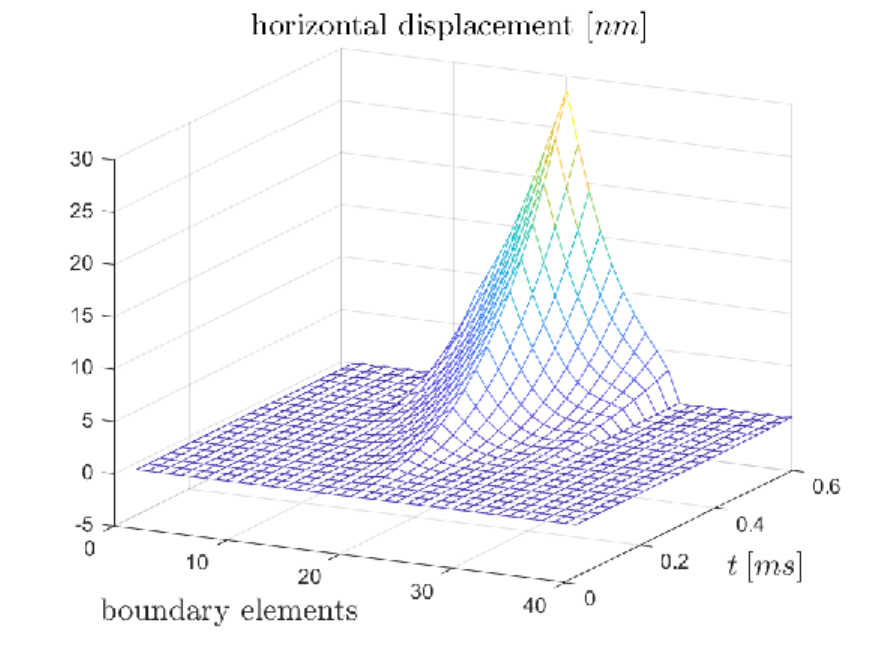}
\:
\includegraphics[scale=0.53]{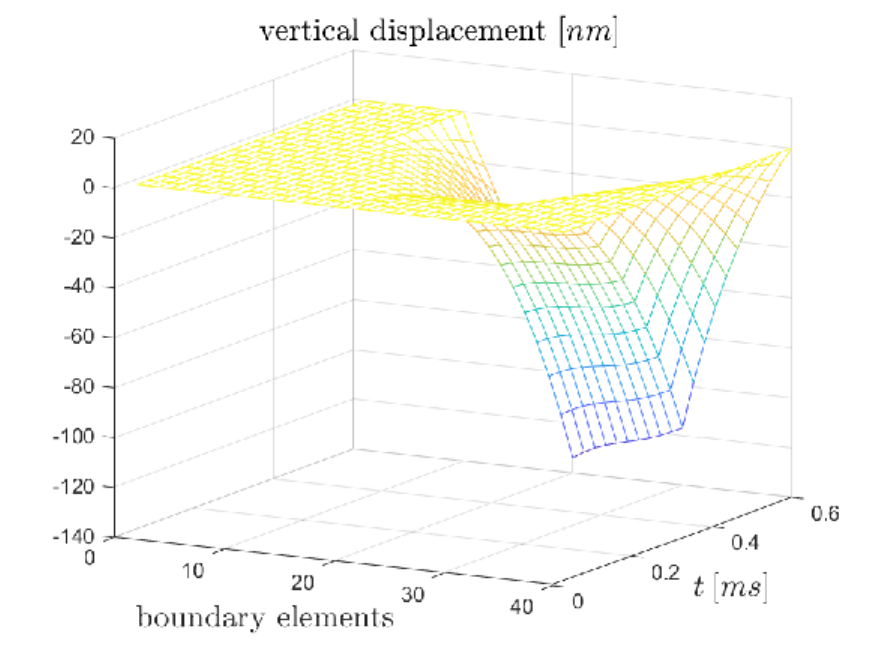}
\caption{Horizontal and vertical displacement expressed in nanometers on $\Gamma \times (0,T]$. Data obtained for $\Delta t=0.03$ $ms$ and $h=0.1$ $m$ (space elements from 1 to 10 are in $\Gamma_b$, from 11 to 20 in $\Gamma_r$, from 21 to 30 in $\Gamma_t$, from 31 to 40 in $\Gamma_l$).}
\label{Example2_surface_displecement}
\end{figure}

\begin{figure}
\centering
\includegraphics[scale=0.53]{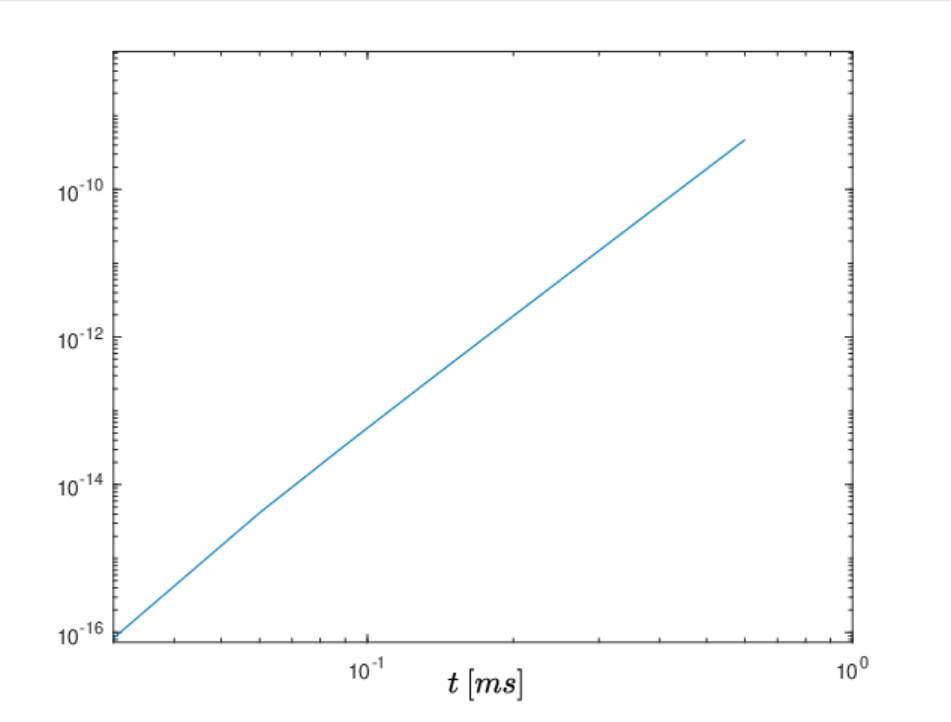}
\caption{Energy as a function of time (data obtained for $\Delta t=0.03$ $ms$ and $h=0.1$ $m$).}
\label{Example2_energy_in_time}
\end{figure}

\subsection{Example 3: concrete-concrete Coulomb frictional contact}
In this example we consider a concrete square of side length $1\,m$ embedded in surrounding concrete, under dynamic contact. As in Example 2, the material parameters of concrete correspond to a linearly elastic material with primary and secondary wave speeds $c_P=3.253\,\frac{m}{ms}$, $c_S=1.992\,\frac{m}{ms}$ (see \cite{concretenorm}). The interface $\Gamma_\Sigma'$ between the two domains is subdivided into $\Gamma_I=\Gamma_t \cup \Gamma_r$ and $\Gamma_C=\Gamma_l \cup \Gamma_b$, using the notation of the first example. A vertical traction is prescribed on the top side $\Gamma_t$, given by $f_2(t,{\bf x})=-4\tanh((\frac{t}{15})^2)\, \frac{kg}{m\, (ms)^2}$, ${\bf x} \in \Gamma_t$ as above. The final time is $T=0.6\,ms$ in such a way that the primary wave can reach the bottom side and travel back to the top side of the square. Following \cite{wrig}, Table 4.1, the Coulomb friction coefficient is given by ${\cal F}_c=0.75$, corresponding to the higher friction between two concrete bodies, compared to concrete and steel in Example 2.\\
The mixed formulation of the contact problem is solved using the Uzawa algorithm with a stopping criterion $\epsilon=10^{-6}$ and update parameter $\rho=10^5$.\\
Figure \ref{concrete_concrete} displays the square of the relative energy error in terms of the mesh size $h$, showing convergence of order approximately $\mathcal{O}(h^{\log(3)/\log(2)})\simeq\mathcal{O}(h^{1.58})$ {for both symmetric and non-symmetric formulations}.\\
Figure \ref{Example3_surface_gap_displecement} 
depicts the horizontal and vertical components of the approximate displacement gap $\widetilde{\mathbf{u}}$ as a function of space and time, for elements on the boundary and for times $[0,0.6]$. In this figure the gap has been trivially extended to $\Gamma_I$. The horizontal component is three orders of magnitude smaller than the vertical component, unlike in Example 2, and therefore the detachment of the top-right corner is now much smaller and almost negligible. This is due to the increased Coulomb friction coefficient, which is more than doubled for concrete-concrete contact, compared to the one for concrete-steel. In Figure \ref{Example3_deformation_zoom} we depict the approximate solution, magnified by a factor of $10^6$, showing on the left the deformation of the two-body concrete structure  and on the right a zoomed-in view of the top-left corner. There the detachment of the inner square from the surrounding body is observed. 

\begin{figure}
\centering
\includegraphics[scale=0.55]{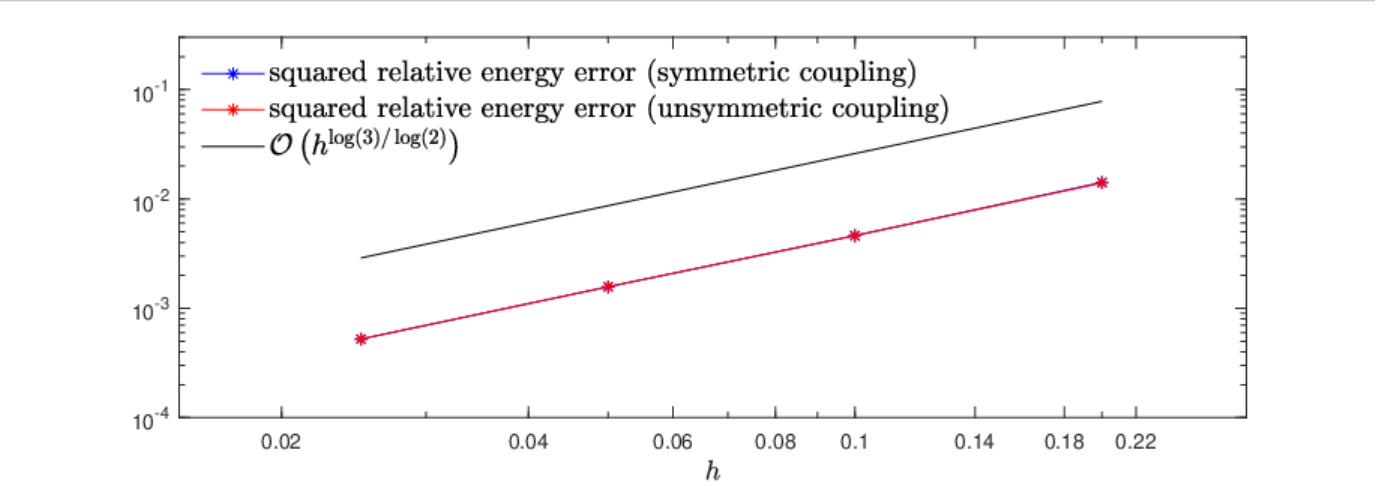}
\caption{Squared relative energy error for Example 3 (concrete-concrete), depending on the space-time discretization parameters.}
\label{concrete_concrete}
\end{figure}
\begin{figure}
\includegraphics[scale=0.53]{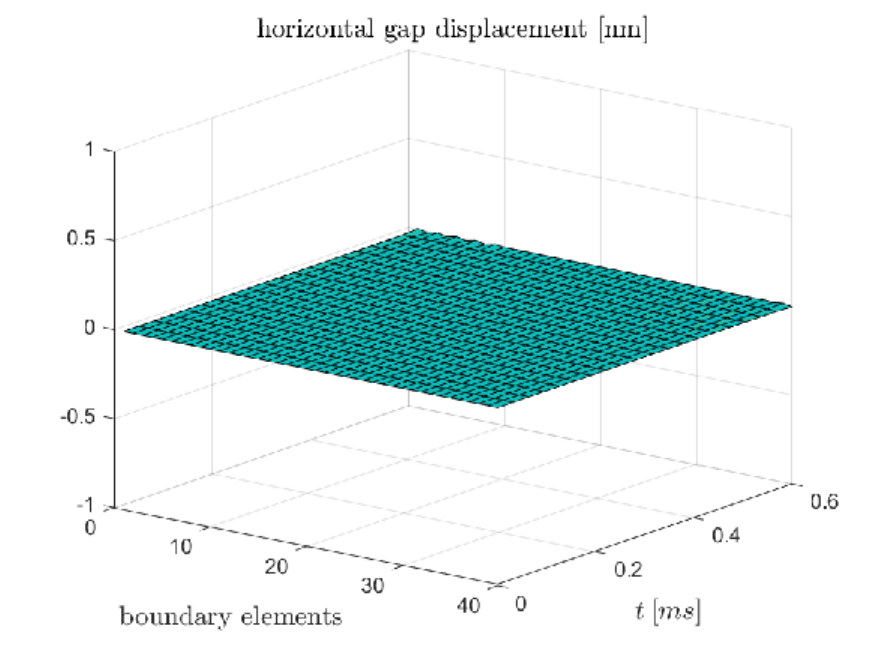}
\:
\includegraphics[scale=0.53]{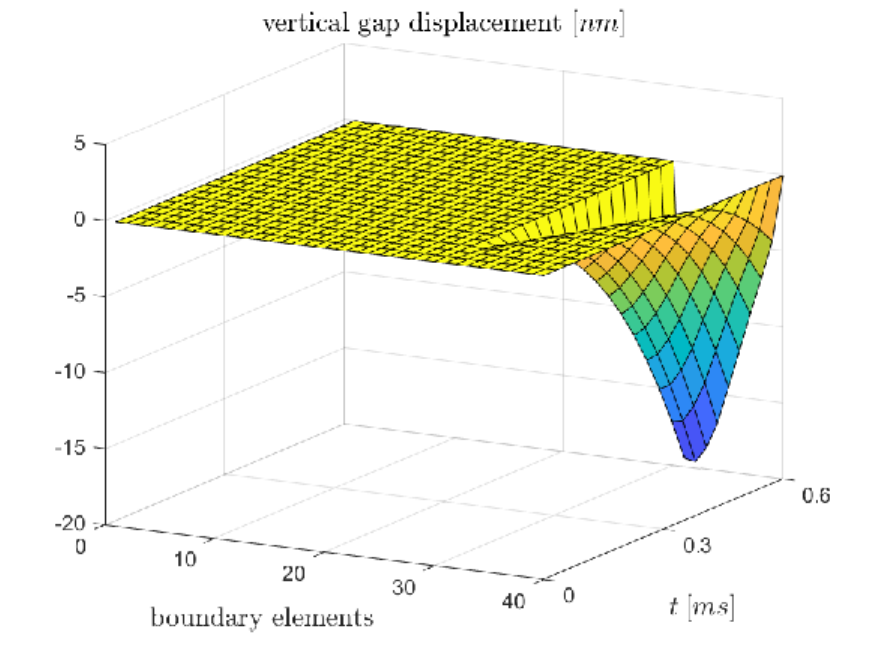}
\caption{Horizontal and vertical displacement gap expressed in nanometers on $\Gamma \times (0,T]$. Data obtained for $\Delta t=0.03$ $ms$ and $h=0.1$ $m$ (space elements from 1 to 10 are in $\Gamma_b$, from 11 to 20 in $\Gamma_r$, from 21 to 30 in $\Gamma_t$, from 31 to 40 in $\Gamma_l$).}
\label{Example3_surface_gap_displecement}
\end{figure}

\begin{figure}
\includegraphics[scale=0.53]{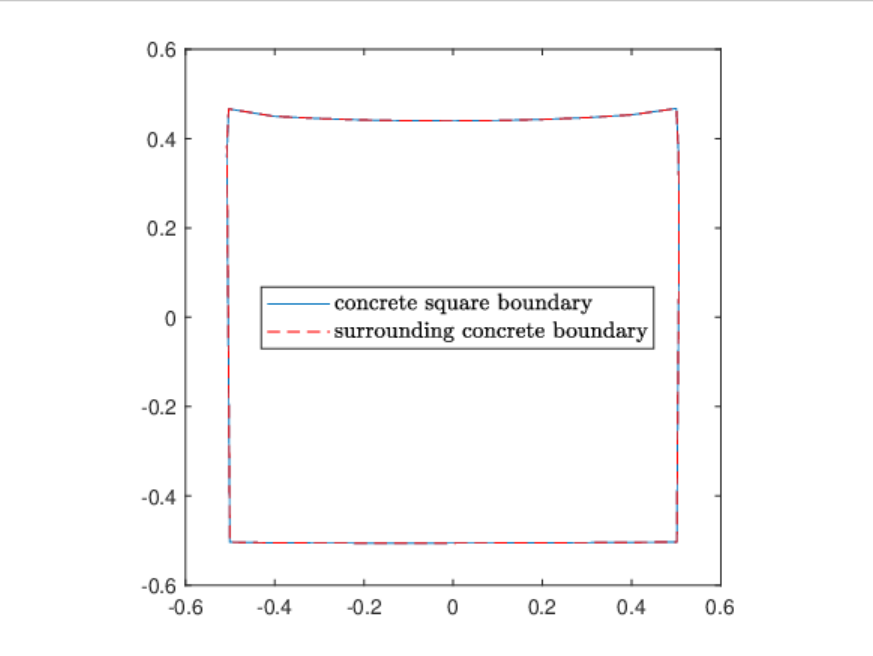}
\:
\includegraphics[scale=0.53]{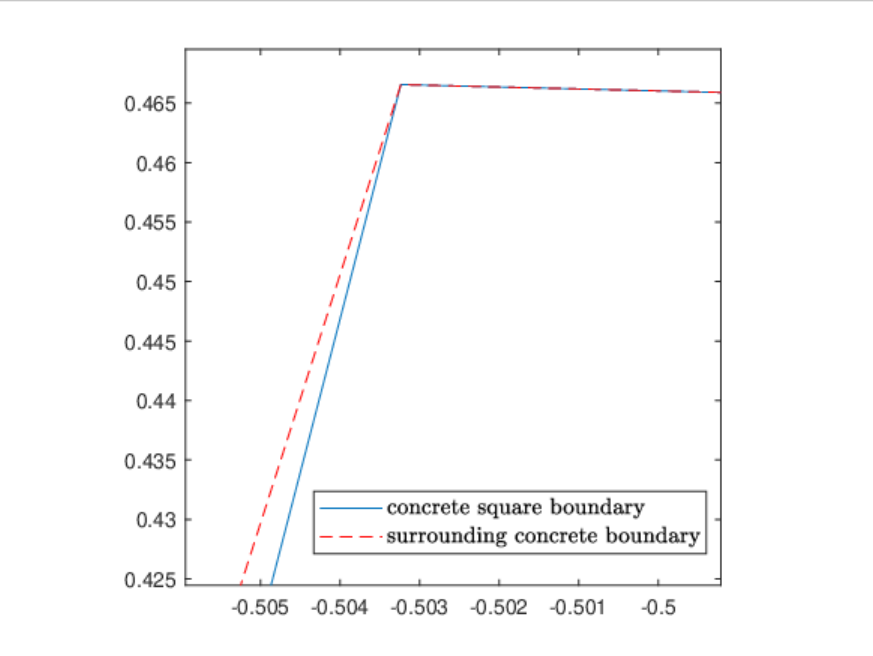}
\caption{Deformation of the two-body concrete structure on the left and zoomed-in view of the top-left corner on the right (approximate solution obtained for $\Delta t=0.03$ $ms$ and $h=0.1$ $m$, magnified by a factor of $10^6$).}
\label{Example3_deformation_zoom}
\end{figure}

\subsection{Example 4: dynamic contact problem in a circular geometry}

This final example considers a linearly elastic disk $\Omega=\left\lbrace (x,y)^\top \: :\: x^2+y^2\leq 0.2 \right\rbrace$ with $c_S=1$ and $c_P=2$, which is pushed upward by the impact of a moving surface. Contact can take place on all of the boundary of the disk, i.e.~$\Gamma=\Gamma_C$. No external forces are prescribed. The dynamics results from a time-dependent, flat obstacle at height
$$g(t)=4\left(\sqrt{1-1.5\:\left(\frac{t}{2}-0.2\right)^2}-1\right)\:H\left[1-\frac{t}{2}\right]-3.2\:H\left[\frac{t}{2}-1\right]-0.12.$$
We consider Coulomb friction contact between the disk and this obstacle, with $\mathcal{F}_c=2$. Figure \ref{deformation_circ_F_2} shows snapshots of the computed dynamics of the disk and the obstacle: for short times the elastic disk is squashed by the contact with the flat obstacle, with friction in the tangential direction; the elastic body then moves up and detaches. 

Figure \ref{en_0_and_2} compares the energy with ($\mathcal{F}_c=2$) and without ($\mathcal{F}_c=0$) Coulomb friction. After a rapid initial increase of the energy after the impact of the obstacle, the energy stabilizes and, after detachment, remains constant. The final energy is slightly larger with friction, than without, indicating a slightly higher transfer of energy to the disk from a frictional surface. This observation is in line with detailed studies of the influence of surface materials in the sports science literature \cite{pingpong}.

Figures \ref{deformation_circ_F_2} and \ref{en_0_and_2} were obtained 
using a discretization of $\Gamma = \partial \Omega$ by $80$ straight, uniform elements and $\Delta t\simeq 9.88\cdot 10^{-3}$. Using the symmetric formulation \eqref{energetic_formh} of the Poncar\'e-Steklov operator, the mixed formulation of the contact problem was solved using the Uzawa algorithm with a stopping criterion $\epsilon=10^{-6}$ and update parameter $\rho=10^5$. {Analogous numerical results, not shown here for the sake of brevity, were obtained using the non-symmetric formulation \eqref{energetic_form_non_symmh}.}

\begin{figure}
\centering
\includegraphics[scale=0.5]{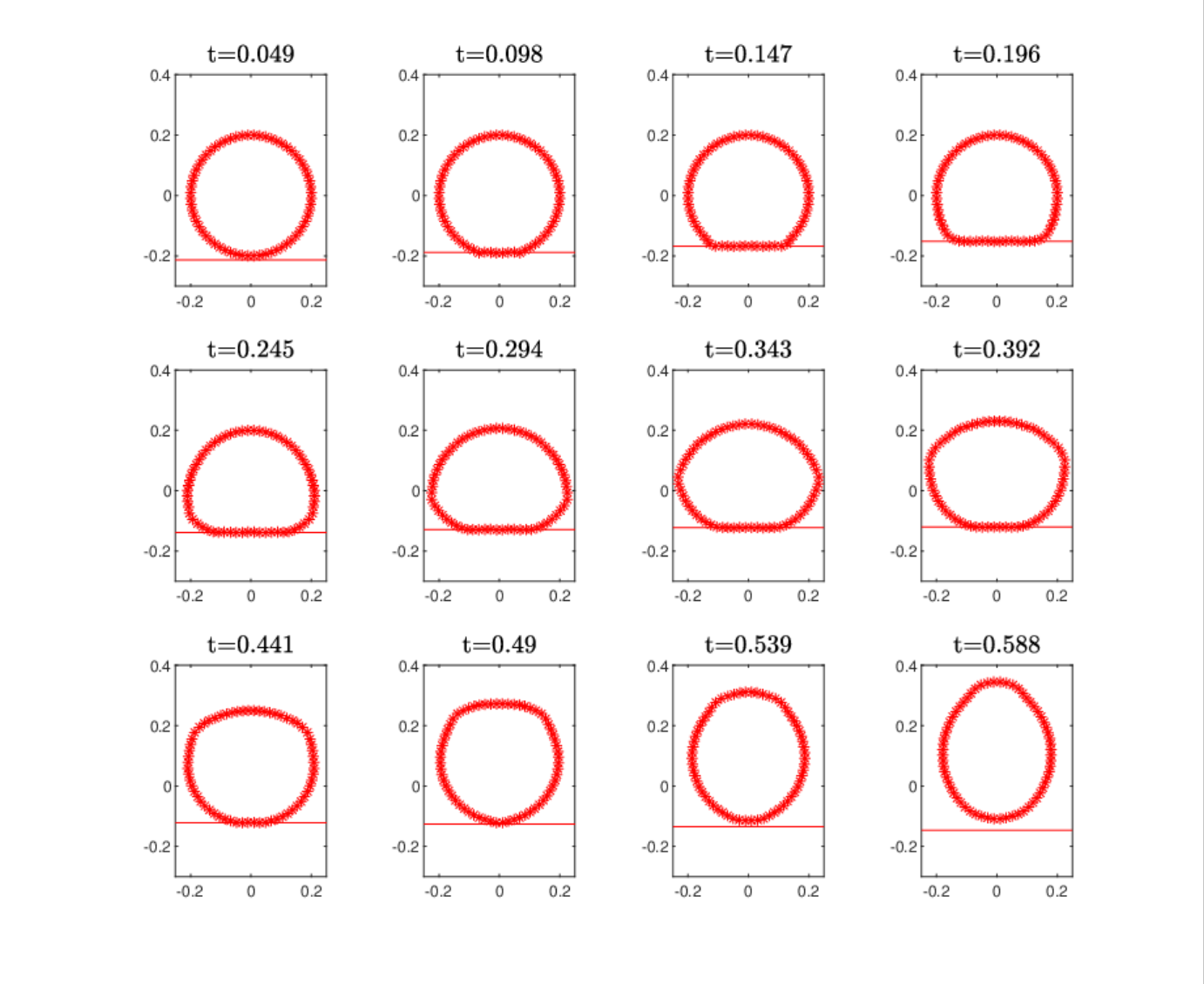}
\caption{Deformation of a disk under frictional contact with a moving obstacle.}
\label{deformation_circ_F_2}
\end{figure}

\begin{figure}
\centering
\includegraphics[scale=0.5]{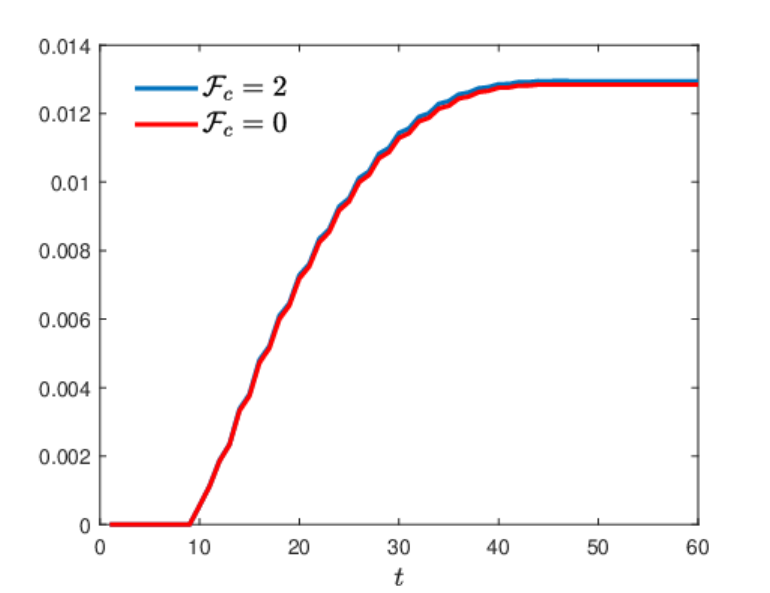}
\caption{Energy as a function of time, with and without Coulomb friction.}
\label{en_0_and_2}
\end{figure}

\section{Conclusions}\label{sec:conclusions}

This article investigates energetic Galerkin space-time boundary element methods to solve frictional contact problems in linear elastodynamics. Because the contact occurs at the
interface between two bodies, boundary elements provide a natural and efficient discretization approach. Here, the involved Poincar\'{e}-Steklov operator can be discretized in either symmetric or non-symmetric formulations.\\
We analyze the proposed method for unilateral contact involving Tresca friction, where we obtain an a priori  estimate for the error of the numerical solution in any space dimension. Numerical experiments in two space dimensions investigate the method beyond this idealized setting, for two-sided contact and realistic friction laws.\\
The implementation is discussed for such general problems, including the algebraic formulation of the boundary integral problem and its solution by a space-time Uzawa algorithm.\\
Detailed numerical results confirm the stability, energy conservation and convergence  of the  method. They study both the fundamental properties of the method for Tresca and Coulomb friction and apply it to problems for the dynamic unilateral and two-sided contact of concrete or steel in the linearly elastic regime. 
 
The practical implementation for three-dimensional problems and space-time adaptive mesh refinements \cite{banz,adaptive,stadaptive,cont1} remain the subject of future work.

\section*{Acknowledgements}

{This research was supported by the  Research in Residence program of the Centre International de Rencontres Math\'{e}matiques, Luminy, in 2023.}

\section*{Appendix}\label{appendix A}

In this appendix we review the definition and basic properties of space--time anisotropic Sobolev spaces, as relevant for our analysis in the main body of the article. We refer to  \cite{hd} for a detailed exposition for the scalar wave equation and to \cite{ourpaper, Becache1993, Becache1994} for elastodynamics. To include both open screens and closed boundaries for $\Gamma$, we denote by $\widetilde{\Gamma}$ be a closed, orientable Lipschitz manifold of dimension $d-1$ which contains $\Gamma$ as an open submanifold.    
We consider the standard Sobolev spaces of distributions with support in $\Gamma$:
$$\widetilde{H}^s(\Gamma) = \{u\in H^s(\widetilde{\Gamma}): \mathrm{supp}\ u \subset {\overline{\Gamma}}\}\ , \quad\ s \in \mathbb{R}\ .$$
The Sobolev space ${H}^s(\Gamma)$ of extensible distributions is then defined as the quotient space $ H^s(\widetilde{\Gamma}) / \widetilde{H}^s({\widetilde{\Gamma}\setminus\overline{\Gamma}})$.
To define a family of Sobolev norms in the frequency domain, we consider a partition of unity $\alpha_i$, $i=1, \dots,p$, subordinate to a covering of $\widetilde{\Gamma}$ by open sets $B_i \subset \mathbb{R}^{d-1}$, and for each $i$, a diffeomeorphism $\varphi_i$ from $B_i$ to the unit cube in $\mathbb{R}^{d-1}$. A family of Sobolev norms depending on the parameter $\omega \in \mathbb{C}\setminus \{0\}$ is defined as follows:
\begin{equation*}
 ||u||_{s,\omega,{\widetilde{\Gamma}}}=\left( \sum_{i=1}^p \int_{{\mathbb{R}^{d-1}}} (|\omega|^2+|\pmb{\xi}|^2)^s|\mathcal{F}\left\{(\alpha_i u)\circ \varphi_i^{-1}\right\}(\pmb{\xi})|^2 d\pmb{\xi} \right)^{\frac{1}{2}}\ ,
\end{equation*}
where $\mathcal{F}=\mathcal{F}_{\mathbf{x} \mapsto \pmb{\xi}}$ is the Fourier transform $\mathcal{F}\varphi(\pmb{\xi}) = \int e^{-i\mathbf{x}\cdot\pmb{\xi}} \varphi(\mathbf{x})\ d\mathbf{x}$. The norms on $H^s(\Gamma)$, $\|u\|_{s,\omega,\Gamma} = \inf_{v \in \widetilde{H}^s(\widetilde{\Gamma}\setminus\overline{\Gamma})} \ \|u+v\|_{s,\omega,\widetilde{\Gamma}}$ are equivalent for different $\omega$. We further define $\widetilde{H}^s(\Gamma)$, $\|u\|_{s,\omega,\Gamma, \ast } = \|e_+ u\|_{s,\omega,\widetilde{\Gamma}}$, using the extension $e_+$ of a distribution on $\Gamma$ by $0$ to a distribution on $\widetilde{\Gamma}$.  If a fixed value of $\omega$ is considered, we write $H^s_\omega(\Gamma)$ for $H^s(\Gamma)$ {endowed with the norm $\|\cdot\|_{s,\omega,\Gamma}$}, and $\widetilde{H}^s_\omega(\Gamma)$ for $\widetilde{H}^s(\Gamma)$ {endowed with the norm $\|\cdot\|_{s,\omega,\Gamma, \ast}$}. 
Note that $\|u\|_{s,\omega,\Gamma, \ast }\geq \|u\|_{s,\omega,\Gamma}$. 

We now define the space-time anisotropic Sobolev spaces which are used in this article:
\begin{definition}
For {$\sigma>0$ and} $r,s \in\mathbb{R}$ we set
\begin{align}
 H^r_\sigma(\mathbb{R}^+,{H}^s(\Gamma))&=\{ u \in \mathcal{D}^{'}_{+}(H^s(\Gamma)): e^{-\sigma t} u \in \mathcal{S}^{'}_{+}(H^s(\Gamma))  \textrm{ and }   ||u||_{r,s,\Gamma} < \infty \}\ , \nonumber \\
 H^r_\sigma(\mathbb{R}^+,\widetilde{H}^s({\Gamma}))&=\{ u \in \mathcal{D}^{'}_{+}(\widetilde{H}^s({\Gamma})): e^{-\sigma t} u \in \mathcal{S}^{'}_{+}(\widetilde{H}^s({\Gamma}))  \textrm{ and }   ||u||_{r,s,\Gamma, \ast} < \infty \}\ .\label{sobdef}
\end{align}
Here, $\mathcal{D}^{'}_{+}({H}^s({\Gamma}))$ consists of all distributions on $\mathbb{R}$ with support in $[0,\infty)$, which take values in the real-valued subspace of ${H}^s({\Gamma})$. We similarly define $\mathcal{D}^{'}_{+}(\widetilde{H}^s({\Gamma}))$, and denote the spaces of tempered distributions by $\mathcal{S}^{'}_{+}({H}^s({\Gamma}))$ and $\mathcal{S}^{'}_{+}(\widetilde{H}^s({\Gamma}))$.  The Sobolev spaces \eqref{sobdef} are Hilbert spaces with the norms
\begin{align}
\|u\|_{r,s,\sigma}:=\|u\|_{r,s,\Gamma,\sigma}&=\left(\int_{-\infty+i\sigma}^{+\infty+i\sigma}|\omega|^{2r}\ \|\hat{u}(\omega)\|^2_{s,\omega,\Gamma}\ d\omega \right)^{\frac{1}{2}}\ ,\nonumber \\
\|u\|_{r,s,\sigma,\ast} := \|u\|_{r,s,\Gamma,\sigma,\ast}&=\left(\int_{-\infty+i\sigma}^{+\infty+i\sigma}|\omega|^{2r}\ \|\hat{u}(\omega)\|^2_{s,\omega,\Gamma,\ast}\ d\omega \right)^{\frac{1}{2}}\,. \label{sobnormdef}  
\end{align}
\end{definition}
When $r=s=0$ we obtain the weighted $L^2$-space with scalar product $\langle u,v \rangle_{{\sigma,\Gamma,\mathbb{R}^+}}=\int_0^\infty e^{-2\sigma t} \int_\Gamma u \,v \, d\Gamma_{\mathbf{x}}\ dt$, which was introduced in \eqref{sigma_product}). 

We finally recall the relevant mapping properties of the boundary integral operators in these Sobolev spaces:
\begin{theorem}\label{mappingproperties}
Let 
$\sigma>0$. Then the following operators are continuous for $r\in \R$:
\begin{align*}
& \mathcal{V}:  {H}^{r+1}_\sigma(\R^+, \tilde{H}^{-\frac{1}{2}}(\Gamma))^d\to  {H}^{r}_\sigma(\R^+, {H}^{\frac{1}{2}}(\Gamma))^d \ ,
\\ & \mathcal{K}^\ast:  {H}^{r+1}_\sigma(\R^+, \tilde{H}^{-\frac{1}{2}}(\Gamma))^d\to {H}^{r}_\sigma(\R^+, {H}^{-\frac{1}{2}}(\Gamma))^d \ ,
\\ & \mathcal{K}:  {H}^{r+1}_\sigma(\R^+, \tilde{H}^{\frac{1}{2}}(\Gamma))^d\to {H}^{r}_\sigma(\R^+, {H}^{\frac{1}{2}}(\Gamma))^d \ ,
\\ & \mathcal{W}:  {H}^{r+1}_\sigma(\R^+, \tilde{H}^{\frac{1}{2}}(\Gamma))^d\to {H}^{r}_\sigma(\R^+, {H}^{-\frac{1}{2}}(\Gamma))^d \ .
\end{align*}\\
\end{theorem}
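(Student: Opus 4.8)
The final statement to prove is Theorem~\ref{mappingproperties}, the mapping properties of the boundary integral operators $\mathcal{V}$, $\mathcal{K}$, $\mathcal{K}^\ast$, $\mathcal{W}$ on the space--time anisotropic Sobolev spaces. My plan is to proceed by the Laplace transform in time, reducing the four space--time estimates to frequency-domain estimates for the corresponding elliptic (resolvent) boundary integral operators, uniformly in the complex frequency $\omega$ with $\mathrm{Im}\,\omega = \sigma > 0$.

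\textbf{Step 1: Laplace transform and reduction to the frequency domain.} Applying the Laplace transform $t \mapsto \omega$ to the single layer, double layer, adjoint double layer and hypersingular potentials turns the convolution-in-time operators $\mathcal{V},\mathcal{K},\mathcal{K}^\ast,\mathcal{W}$ into families of boundary integral operators $\widehat{\mathcal{V}}(\omega),\widehat{\mathcal{K}}(\omega),\widehat{\mathcal{K}}^\ast(\omega),\widehat{\mathcal{W}}(\omega)$ built from the Laplace-transformed fundamental solution $\widehat{G}(\omega;\mathbf{x},\mathbf{y})$ of the Navier--Lam\'e operator (the modified/resolvent elastostatic kernel). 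By the Plancherel-type identity underlying the norms \eqref{sobnormdef}, the continuity of, say, $\mathcal{V}: H^{r+1}_\sigma(\mathbb{R}^+,\widetilde{H}^{-1/2}(\Gamma))^d \to H^{r}_\sigma(\mathbb{R}^+,H^{1/2}(\Gamma))^d$ is equivalent to the bound
\begin{equation*}
\|\widehat{\mathcal{V}}(\omega)\widehat{\phi}\|_{1/2,\omega,\Gamma} \lesssim_\sigma |\omega|^{-1}\,\|\widehat{\phi}\|_{-1/2,\omega,\Gamma,\ast}
\end{equation*}
uniformly for $\mathrm{Im}\,\omega = \sigma$, and analogously for the other three operators with the appropriate powers of $|\omega|$ dictated by the $r{+}1 \to r$ shift. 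So the core of the proof is these $\omega$-uniform resolvent estimates with explicit dependence on $|\omega|$.

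\textbf{Step 2: Frequency-explicit coercivity/continuity of the elliptic operators.} For the single layer operator $\widehat{\mathcal{V}}(\omega)$ I would use the standard trick of relating the boundary bilinear form to the elastodynamic energy in $\mathbb{R}^d\setminus\Gamma$ via Green's formula for the resolvent problem $\nabla\cdot\sigma(\mathbf{v}) - \varrho\omega^2\mathbf{v} = 0$, and tracking the $|\omega|$-weights through the Korn/Gårding-type inequality and trace theorems with $\omega$-dependent norms. This is exactly the technique developed for the scalar wave equation in \cite{hd} and adapted to elastodynamics in \cite{ourpaper, Becache1993, Becache1994}; the $\omega$-uniform bounds there already encode the polynomial-in-$|\omega|$ loss that produces the one-order time-regularity shift $H^{r+1}_\sigma \to H^{r}_\sigma$. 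The hypersingular operator $\widehat{\mathcal{W}}(\omega)$ is handled by the analogous energy identity for the double layer potential, the adjoint double layer $\widehat{\mathcal{K}}^\ast(\omega)$ and double layer $\widehat{\mathcal{K}}(\omega)$ by duality and by the jump relations \eqref{first_BIE}, \eqref{second_BIE}, writing $\mathcal{K}$ and $\mathcal{K}^\ast$ in terms of traces of the layer potentials and invoking the already-established mapping properties of $\mathcal{V}$ and $\mathcal{W}$ together with the Calderón identities.

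\textbf{Step 3: Reassembly.} Finally I would integrate the $\omega$-uniform frequency bounds over the line $\mathrm{Im}\,\omega = \sigma$ against the weight $|\omega|^{2r}$ and use the definition \eqref{sobnormdef} of the space--time norms to conclude continuity of each operator between the claimed spaces, for every $r \in \mathbb{R}$. Since these are precisely the mapping properties recalled (for elastodynamics) in \cite{ourpaper2} and proved in detail in \cite{hd, ourpaper, Becache1993, Becache1994}, it suffices to cite those references for the frequency-domain estimates and indicate the reduction above; I do not expect to reproduce the full kernel analysis here.

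The main obstacle is obtaining the frequency-domain estimates with the \emph{sharp} power of $|\omega|$: a naive elliptic estimate loses too many powers of $|\omega|$ and would force a larger regularity shift than $r{+}1 \to r$. The delicate point is the careful bookkeeping of $\omega$-weighted trace inequalities and the coercivity constant of the resolvent form as $|\omega|\to\infty$ — this is where the elastodynamic Korn inequality and the precise behavior of $\widehat{G}(\omega)$ (with both pressure and shear wave contributions $c_{\mathtt{P}},c_{\mathtt{S}}$) enter, and where the argument genuinely differs from the scalar acoustic case. Fortunately this analysis has been carried out in the cited literature, so the proof reduces to invoking it after the Laplace-transform reduction.
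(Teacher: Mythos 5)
Your approach is essentially the paper's: Theorem \ref{mappingproperties} is stated there without proof, recalled from \cite{hd, ourpaper, Becache1993, Becache1994}, and those references prove it exactly by your Fourier--Laplace reduction to $\omega$-uniform bounds for the frequency-domain operators on the line $\mathrm{Im}\,\omega=\sigma$, followed by integration against $|\omega|^{2r}$ as in \eqref{sobnormdef}. One correction to your Step 1: the frequency bound needed for $\mathcal{V}: H^{r+1}_\sigma(\R^+,\tilde H^{-1/2}(\Gamma))^d \to H^{r}_\sigma(\R^+,H^{1/2}(\Gamma))^d$ is $\|\widehat{\mathcal{V}}(\omega)\widehat{\phi}\|_{1/2,\omega,\Gamma} \lesssim_\sigma |\omega|\,\|\widehat{\phi}\|_{-1/2,\omega,\Gamma,\ast}$, i.e.\ the operator norm \emph{grows} like $|\omega|$ (which is precisely what costs one order of time regularity); the $|\omega|^{-1}$ you wrote would instead yield a gain of a time derivative, which is false --- your subsequent phrase about ``appropriate powers dictated by the $r{+}1\to r$ shift'' shows you intend the correct bookkeeping, so this is a slip rather than a flaw in the argument.
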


\end{document}